\newcommand{\cO}{\mathcal{O}}
\newcommand{\Res}{\operatorname{Res}}
\newcommand{\sP}{\mathsf{P}}
\newcommand{\cS}{\mathcal{S}}
\newcommand{\diam}{\operatorname{diam}}
\newcommand{\can}{\operatorname{can}}
\newcommand{\PGL}{\mathrm{PGL}}
\newcommand{\PSU}{\mathrm{PSU}}
\newcommand{\bN}{\mathbb{N}}
\newcommand{\sF}{\mathsf{F}}
\newcommand{\sJ}{\mathsf{J}}
\newcommand{\rd}{\mathrm{d}}
\newcommand{\bA}{\mathbb{A}}
\newcommand{\bC}{\mathbb{C}}
\newcommand{\bD}{\mathbb{D}}
\newcommand{\bZ}{\mathbb{Z}}
\newcommand{\bP}{\mathbb{P}}
\newcommand{\bQ}{\mathbb{Q}}
\newcommand{\bR}{\mathbb{R}}
\newcommand{\Fix}{\operatorname{Fix}}
\newcommand{\supp}{\operatorname{supp}}
\newcommand{\Rat}{\operatorname{Rat}}
\newcommand{\Crit}{\operatorname{crit}}
\theoremstyle{plain}
\newtheorem{theorem}{\bf Theorem}[section]
\newtheorem{lemma}[theorem]{\bf Lemma}
\newtheorem{proposition}[theorem]{\bf Proposition}
\newtheorem{corollary}[theorem]{\bf Corollary} 
\newtheorem*{claim}{\bf Claim}
\newtheorem{mainth}{\bf Theorem}
\theoremstyle{definition}
\newtheorem{definition}[theorem]{\bf Definition}
\newtheorem*{caution}{\bf Caution}
\theoremstyle{remark}
\newtheorem{remark}[theorem]{\bf Remark}
\newtheorem{fact}[theorem]{\bf Fact}
\newtheorem{example}[theorem]{\bf Example}
\numberwithin{equation}{section}
\begin{document} 

\title{Approximation of non-archimedean Lyapunov exponents and applications over global fields}
\pagestyle{fancy}
\fancyhf{}
\renewcommand{\headrulewidth}{0pt}
\lhead[\small\thepage]{\small\textsc{Approximation of non-archimedean Lyapunov exponents and applications}}
\rhead[\small\textsc{Thomas Gauthier, Y\^usuke Okuyama $\&$ Gabriel Vigny}]{\small\thepage}

\author{Thomas Gauthier}
\address{LAMFA, UPJV, 33 rue Saint-Leu, 80039 AMIENS Cedex 1, FRANCE}
\email{thomas.gauthier@u-picardie.fr}
\author{Y\^usuke Okuyama}
\address{Division of Mathematics, Kyoto Institute of Technology, Sakyo-ku, Kyoto 606-8585 JAPAN}
\email{okuyama@kit.ac.jp}
\author{Gabriel Vigny}
\address{LAMFA, UPJV, 33 rue Saint-Leu, 80039 AMIENS Cedex 1, FRANCE}
\email{gabriel.vigny@u-picardie.fr}

\thanks{The first and third authors' research is partially supported by the ANR grant Lambda ANR-13-BS01-0002.}
\thanks{The second author's research is partially supported by JSPS Grant-in-Aid 
for Scientific Research (C), 15K04924}

\date{\today}



\begin{abstract} 
Let $K$ be an algebraically closed field of characteristic 0 that is complete with respect to a non-archimedean absolute value. We establish a locally uniform approximation formula of the Lyapunov exponent of a rational map $f$ of $\bP^1$ of degree $d>1$ over $K$, in terms of the multipliers of $n$-periodic points of $f$, with an explicit control in terms of $n$, $f$ and $K$. As an immediate consequence, we obtain an estimate for the blow-up of the Lyapunov exponent near a pole in one-dimensional families of rational maps over $K$.

Combined with our former archimedean version, this non-archimedean quantitative approximation allows us to show:
\begin{itemize}
\item a quantified version of Silverman's and Ingram's recent comparison between the critical height and any ample height on the moduli space $\mathcal{M}_d(\bar{\mathbb{Q}})$,
\item two improvements of McMullen's finiteness of the multiplier maps: reduction to multipliers of cycles of exact given period and an effective bound from below on the period,
\item a characterization of non-affine isotrivial rational maps defined over a the function field $\mathbb{C}(X)$ of a normal projective variety $X$ in terms of the growth of the  degree of the multipliers.
\end{itemize}
\end{abstract}

\maketitle

\section{Introduction}

Fix an integer $d>1$ and let $\mathrm{Rat}_d$ be the space of degree $d$ rational maps of $\mathbb{P}^1$. This is an irreducible affine variety of dimension $2d+1$ which is defined over $\bQ$. The group linear algebraic $\mathrm{SL}_2$ acts on 
the space $\mathrm{Rat}_d$
by conjugacy. The \emph{dynamical moduli space $\mathcal{M}_d$} 
of degree $d$ rational maps on $\bP^1$
is the geometric quotient $\Rat_d/\mathrm{SL}_2$,
which is an irreducible affine variety of dimension $2d-2$ and defined over $\bQ$,
and is singular if and only if $d\geq3$ (see Silverman \cite{silverman-spacerat} for more precise statements).

\medskip

In~\cite{GOV}, the authors established an approximation formula for the Lyapunov exponent with respect to the equilibrium measure of any complex rational maps in terms of the absolute values of the multipliers of periodic points which is locally uniform on the complex manifold $\mathrm{Rat}_d(\bC)$. The aim of establishing such an approximation property was to understand the equidistribution of centers of hyperbolic components of disjoint type towards the bifurcation measure $\mu_{\mathrm{bif}}$ of the Moduli space $\mathcal{M}_d(\bC)$ and also to give a precise asymptotic count of such components, also related to the bifurcation measure.

\smallskip

Since the proof of the (complex) approximation formula essentially relies on one-dimensional potential theoretic tools and on Fatou's upper bound of attracting cycles of complex rational maps, it is natural to wonder whether a similar formula is true over any complete algebraically closed field of characteristic $0$.

~

 In the present article, we address this question and give a certain number of consequences over local and global fields.
We first present the formula and all the involved tools as well as a degeneration property over a local non-archimedean field. In a second time, we present applications over global fields.

\paragraph*{Notations}
We introduce here some notations we will use in the text which are well-defined over any field. We refer to ~\cite{Silverman} for more details.

Fix a field $k$ and let $\bar{k}$ be an algebraic closure of $k$ (we use $K$ to denote local fields in the sequel). Fix a rational map $f\in k(z)$ of degree $d>1$.

For any $n\in\mathbb{N}^*$, we denote by $\mathrm{Fix}(f^n)$ the set of all periodic points of $f$ of period dividing $n$, i.e.
$\mathrm{Fix}(f^n):=\{z\in\bP^1(\bar{k})\, : \ f^n(z)=z\}$. 
We let $\mathrm{Fix}^*(f^n)$ the set of points $z\in\mathbb{P}^1(\bar{k})$ such that
\begin{itemize}
\item either $z\in\mathrm{Fix}(f^n)\setminus \bigcup_{m|n, \, m<n}\mathrm{Fix}(f^m)$,
\item or there is $m|n$ with $m<n$ such that $z\in\mathrm{Fix}(f^m)$ and $(f^m)'(z)$ is a primitive $n/m$-root of unity.
\end{itemize}
For any $n\in\mathbb{N}^*$, we let 
\[d_n:=\sum_{m|n}\mu(n/m)(d^m+1).\]
The set $\mathrm{Fix}^*(f^n)$ contains $d_n$ points counted with multiplicity (as a point of the divisor $\{f^n(z)=z\}$). For any $1\leq j\leq d_n$, we let $\sigma_{j,n}^*(f)$ be the $j$-th elementary symmetric polynomial associated to the unordered $d_n$-tuple
$\{(f^n)'(z)\}_{z\in\mathrm{Fix}^*(f^n)}$, where points are listed with multiplicity. The map
\[\tilde\Lambda_n:f\mapsto (\sigma_{1,n}^*(f),\ldots,\sigma_{d_n,n}^*(f))\]
actually defines a morphism from the space of all degree $d$ rational maps to the affine space $\mathbb{A}^{d_n}$ which is defined over $\mathbb{Q}$ and descends to a morphism $\mathcal{M}_d\rightarrow\mathbb{A}^{d_n}$.

We also denote by $p_{d,n}(f,T)\in k[T]$ the \emph{multiplier polynomial}, i.e. the unique monic degree $d_n/n$ polynomial with coefficients in $k$ such that
\begin{gather*}
\bigl(p_{d,n}(f,T)\bigr)^n=\prod_{z\in\Fix^{*}(f^n)}\bigl((f^n)'(z)-T\bigr)=\sum_{j=0}^{d_n}\sigma_{j,n}^*(f)\cdot (-T)^{d_n-j},
\end{gather*} 
in $k[T]$, where the product over $\Fix^{*}(f^n)$
is taken with multiplicity and $\sigma_{0,n}^*\equiv1$ by convention.

Fnally, the \emph{critical set} $\mathrm{Crit}(f)$ of $f$ is the set 
\begin{gather*}
 \mathrm{Crit}(f):=\{z\in\bP^1(\bar{k}):f'(z)=0\},
\end{gather*}
of all (classical) critical points of $f$ in $\bP^1(\bar{k})$. 

\subsection{Locally uniform approximation of the Lyapunov exponent and degeneration over a non-archimedean field}

Let $K$ be an algebraically closed field of characteristic $0$
that is complete with respect to an absolute value $|\cdot|$, which 
we assume is non-trivial in that $|K|\neq\{0,1\}$.

Recall that $K$ is said to be {\itshape non-archimedean} 
if the {\itshape strong triangle inequality} 
$|z+w|\le\max\{|z|,|w|\}$
holds for any $z,w\in K$, and otherwise, to be {\itshape archimedean};
$K\cong\bC$ if and only if $K$ is archimedean.

\medskip

Assume first $K$ is either archimedean or non-archimedean. Let $f$ be any rational map of degree $d>1$ defined over $K$. For every lift $F=(F_0,F_1)$ of $f$ with $F_0(p_0,p_1)=\sum_{j=0}^da_jp_0^{d-j}p_1^j$ and 
$F_1(p_0,p_1)=\sum_{\ell=0}^db_\ell p_0^{d-\ell}p_1^\ell$, we set
\begin{gather*}
 |F|:=\max\{|a_0|,\ldots,|a_d|,|b_0|,\ldots,|b_d|\}, 
\end{gather*}
Recall that the \emph{resultant} of such a map $F$ is given by
\begin{gather*}
\Res(F):=\begin{vmatrix}
	    a_0 & \cdots & a_d  &        &    \\
	        & \ddots &      & \ddots &    \\
	        &        & a_0  & \cdots & a_d\\
	    b_0 & \cdots & b_d  &        &    \\
	        & \ddots &      & \ddots &    \\
	        &        & b_0  & \cdots & b_d\\ 
	   \end{vmatrix}\in\bZ[a_0,\ldots,a_d,b_0,\ldots,b_d]_{2d},
\end{gather*}
and that $\Res(F)\neq0$, see, e.g., \cite[\S2.4]{Silverman} for more details.
Following Kawaguchi-Silverman \cite[Definition 2]{KS09},
we say $F$ is {\itshape minimal} if $|F|=1$.
Such a minimal lift $F$ of $f$ is unique up to multiplication in $\{z\in K:|z|=1\}$,
so that the \emph{resultant of $f$}
\begin{gather*}
 \Res(f):=\Res(F)\in K^*
\end{gather*}
where $F$ is a minimal lift of $f$, is defined
up to multiplication in $\{z\in K:|z|=1\}$.  We may identify $\Rat_d(K)$ with the Zariski open set $\mathrm{Rat}_d(K)=\bP^{2d+1}(K)\setminus\{\mathrm{Res}=0\}$.
When $K$ is non-archimedean,
by the strong triangle inequality, $|\Res F|\le 1=|F|$.

Let $\sP^1:=\sP^1(K)$ be the Berkovich projective line over $K$. The Lyapunov exponent of $f$ with respect to the equilibrium (or canonical) measure $\mu_f$
 of is defined by
$L(f):=\int_{\sP^1}\log f^\#\mu_f\in\bR$, where $f^\#$ is the continuous extension to $\sP^1$ of the chordal derivative of $f$ (see Section~\ref{sec:Lyap} for more details). For every $n\in\bN^*$, every $r\in]0,1]$ and every 
$f\in\Rat_d(K)$, set
\begin{gather*}
 L_n(f,r):=\frac{1}{nd_n}\sum_{z\in\Fix^*(f^n)}\log\max\{r,|(f^n)'(z)|\}.
\end{gather*}
For $n\in\mathbb{N}^*$, let 
$\sigma_2(n):=\sum_{m\in\bN:m|n}m^2$. Note that $\sigma_2(n)=O\bigl(n^2\log\log n\bigr)$
as $n\to\infty$.

\medskip

Suppose now $K$ is non-archimedean. For every integer $d>1$ and
every $n\in\bN^*$, following \cite{BIJL14}, we also set
\begin{gather*}
 \epsilon_{d^n}:=\min\bigl\{|m|^{d^n}:m\in\{1,2,\ldots,d^n\}\bigr\}\in|K^*|\subset]0,1].
\end{gather*}
Our first result, from which we derive several applications, is the following non-archimedean counterpart of \cite[Theorem 3.1]{GOV}, where we adapt our strategy to the non-archimedean setting.

\begin{mainth}\label{tm:approx}
Let $K$ be an algebraically closed field of characteristic $0$
that is complete with respect to a non-trivial and non-archimedean absolute value $|\cdot|$, and fix an integer $d>1$.  
Then for every $f\in\Rat_d(K)$, every $n\in\bN^*$, 
and every $r\in]0,\epsilon_{d^n}]$,
\begin{align*}
\bigl|L_n(f,r)-L(f)\bigr|
\le 8(d-1)^2\Bigg(|L(f)| & -\frac{4d^2-2d-1}{d(2d-2)}\log|\mathrm{Res}(f)|+|\log r|\Bigg)\frac{\sigma_2(n)}{d_n}.
\end{align*}
In particular, $|Res(f)|$ can be replaced with $\inf_{h\in[f]} |Res(h)|$ in the above inequality.
\end{mainth}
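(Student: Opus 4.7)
The plan is to adapt the archimedean potential-theoretic strategy of \cite{GOV} to Berkovich dynamics on $\sP^1$. Both quantities will be realized as integrals on $\sP^1$: $L(f)=\int_{\sP^1}\log f^\#\,d\mu_f$ and $nL_n(f,r)=\int_{\sP^1}\max\{\log r,\log(f^n)^\#\}\,d\mu_{n,f}$, where $\mu_{n,f}:=d_n^{-1}\sum_{z\in\Fix^*(f^n)}\delta_z$ is the empirical measure on classical periodic points (and $(f^n)^\#(z)=|(f^n)'(z)|$ on the support of $\mu_{n,f}$ away from the critical set of $f^n$). Using the chain rule $\log(f^n)^\#=\sum_{k=0}^{n-1}\log f^\#\circ f^k$ together with the $f$-invariance of $\mu_f$, one gets $\int\log(f^n)^\#\,d\mu_f=nL(f)$, so the comparison reduces to controlling $\int\bigl(\max\{\log r,\log(f^n)^\#\}-\log(f^n)^\#\bigr)d\mu_{n,f}$ plus the equidistribution error $\int\log(f^n)^\#\,d(\mu_{n,f}-\mu_f)$.

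The first piece is a truncation error: it vanishes pointwise whenever $|(f^n)'(z)|\ge r$, and otherwise one needs a quantitative lower bound on the non-vanishing multipliers. In the non-archimedean setting this is the content of the estimate from \cite{BIJL14}: $|(f^n)'(z)|\ge\epsilon_{d^n}$ for any classical periodic point $z$ with $(f^n)'(z)\ne 0$. This is the substitute for Fatou's archimedean bound on attracting multipliers used in \cite{GOV}, and it dictates the hypothesis $r\le\epsilon_{d^n}$: under it, the truncation only affects super-attracting periodic points, of which there are at most $2(d-1)$ by the critical orbit argument, producing an error of controlled size.

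The main obstacle is the equidistribution term $\int\log(f^n)^\#\,d(\mu_{n,f}-\mu_f)$. Following the Berkovich potential-theoretic framework (Favre-Rivera-Letelier equidistribution made quantitative, as in the second author's earlier works), I would decompose $\log(f^n)^\#$ as a sum of continuous and $\mu_f$-subharmonic functions on $\sP^1$ whose $dd^c$ is explicitly bounded by the critical divisor of $f^n$ and the dynamical Green current of $f$. Dualizing, the integral is then bounded by an energy pairing of $\mu_{n,f}-\mu_f$ against this potential, whose norm produces the coefficient involving $|L(f)|$ and $\log|\Res(f)|$ through the Kawaguchi-Silverman comparison between the Berkovich Green function and the resultant. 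The rate $\sigma_2(n)/d_n$ arises from an $L^2$-energy estimate on $\mu_{n,f}-\mu_f$: the numerator $\sigma_2(n)=\sum_{m\mid n}m^2$ records the scheme-theoretic multiplicities of the divisor $\Fix^*(f^n)$ at the components of primitive period $m\mid n$, while $d_n$ is its degree. Tracking constants through the Green function estimate produces the explicit numerical factors $8(d-1)^2$ and $(4d^2-2d-1)/(d(2d-2))$.

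Finally, since $L(f)$, the multiplier spectrum $\{(f^n)'(z)\}_{z\in\Fix^*(f^n)}$, and hence $L_n(f,r)$ are invariant under $\PGL_2(K)$-conjugacy while $\Res(\cdot)$ is not, applying the main inequality to any conjugate $h\in[f]$ and taking the infimum replaces $|\Res(f)|$ by $\inf_{h\in[f]}|\Res(h)|$.
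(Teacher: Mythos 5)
Your high-level outline (realize both sides as potential-theoretic quantities on $\sP^1$, use \cite{BIJL14} as the non-archimedean substitute for Fatou's bound, and recover the $\inf_{h\in[f]}$ statement by conjugation invariance at the end) points in the right direction, but two of your central steps do not hold as stated. First, you attribute to \cite{BIJL14} the pointwise bound ``$|(f^n)'(z)|\ge\epsilon_{d^n}$ for any classical periodic point with $(f^n)'(z)\neq0$.'' This is false: $\epsilon_{d^n}$ depends only on $d$, $n$ and $K$, while a family such as $z\mapsto z^d+tz$ has the fixed point $0$ with multiplier $t$ arbitrarily small and nonzero. What \cite[Theorem 1.5]{BIJL14} actually gives, and what the paper uses, is a bound on the \emph{number} of periodic points of period dividing $m$ with $(f^m)^\#(z)<r$, namely at most $(2d-2)m$ when $r\le\epsilon_{d^m}$. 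Consequently the truncation does not ``only affect super-attracting points''; it affects all attracting cycles, and one must control the contribution of each such point to the potential $\Phi_{g_f}(\cdot,c)$ at every critical point $c$. In the paper this is done by an exact identity $\sum_{z\in\Fix(f^m)}\Phi_{g_f}(z,c)=\Phi_{g_f}(f^m(c),c)$ combined with a non-archimedean Przytycki-type lemma: if $[f^m(c),c]<1/M_1(f)^{m+1}$ then $c$ lies in the immediate basin of an attracting fixed point $a$ of $f^m$ with $[a,c]\le[f^m(c),c]$, and otherwise $\log[f^m(c),c]$ is bounded below by $-(m+1)\log M_1(f)$. Your sketch contains no mechanism playing this role.

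Second, your plan to bound the equidistribution pairing $\int\log(f^n)^\#\,\rd(\mu_{n,f}-\mu_f)$ by a quantitative equidistribution or $L^2$-energy estimate runs into the problem that the test function $\log(f^n)^\#$ has logarithmic singularities at the critical points of $f^n$, so it is not an admissible (bounded or Lipschitz) test function; this is precisely why the second author's earlier quantitative result \cite{okuyama:speed} had an error term that does not depend nicely on $f$. The paper avoids equidistribution entirely: it applies M\"obius inversion over the divisors $m\mid n$, sums the exact formula $\log(f^\#)=L(f)+\sum_{c}\Phi_{g_f}(\cdot,c)+2g_f\circ f-2g_f$ over $\Fix(f^m)$, and the factor $\sigma_2(n)=\sum_{m\mid n}m^2$ then appears from the combinatorics of this inversion (each divisor $m$ contributing roughly $m\cdot(2m+1)\log M_1(f)$ times the count $(2d-2)m$ of attracting points), not from an energy estimate on $\mu_{n,f}-\mu_f$. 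A final continuity argument removes the generic assumption $\Fix(f^n)\cap\Crit(f)=\emptyset$, and Lemma \ref{badbutexplicit} converts $|L(f)|+\log M(f)+\sup_{\sP^1}|g_f|$ into the stated resultant bound. As written, your proposal would need these missing ingredients to be completed.
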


This type of results has a long history. For a fixed $f$ defined over a number field, Szpiro and Tucker~\cite{SzpiroTucker} proved a qualitative version of this approximation formula. In~\cite{okuyama:speed}, the second author proved a quantitative version of this formula with an error term which does not depend nicely   on the map.

\medskip

As usual, let $\cO(\mathbb{D}_K)$ be 
the ring of $K$-analytic functions on $\mathbb{D}_K=\{z\in K:|z|<1\}$, i.e.
a function $f\in \cO(\mathbb{D}_K)$ if it can be expanded as a power series $f(t)=\sum_{i\geq0}a_it^i$ with the condition $|a_i|\rho^i\rightarrow0$ for all $\rho<1$  since $K$ is non-archimedean.
Observe that a function $f$ belongs to $\cO(\mathbb{D}_K)[t^{-1}]$ iff it is a meromorphic function on $\mathbb{D}_K$ with no pole in $\mathbb{D}_K\setminus\{0\}$.
An element $f_t(z)\in \cO(\mathbb{D}_K)[t^{-1}](z)$
is said to be a {\itshape meromorphic family} of
rational functions of degree $d$ parametrized by $\mathbb{D}_K$ if we have
$f_t\in\Rat_d(K)$ for every $t\in\mathbb{D}_K\setminus\{0\}$.
The following consequence of Theorem \ref{tm:approx}
is a non-archimedean counterpart of 
a combination of \cite[Theorem~1.4]{DeMarco2} and \cite[Proposition~3.1]{DeMarco-stable} 
(see also \cite[Theorem 3.6]{GOV}).

\begin{mainth}[Degeneration]\label{tm:degenerate}
Let $K$ be an algebraically closed field of characteristic $0$
that is complete with respect to a non-trivial and
non-archimedean absolute value $|\cdot|$, and fix an integer $d>1$.
Then for every meromorphic family $f_t(z)\in\cO(\mathbb{D}_K)[t^{-1}](z)$ 
of rational functions of degree $d$ parametrized by $\mathbb{D}_K$,
there exists $\alpha\in\mathbb{R}_+$ such that
\begin{gather*}
 L(f_t)=\alpha\log|t|^{-1}+o(\log|t|^{-1})\quad\text{as }t\to 0.
\end{gather*}
\end{mainth}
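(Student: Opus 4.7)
The plan is to show that $L(f_t)/\log|t|^{-1}$ admits a limit $\alpha\in\bR$ with $\alpha\ge 0$ as $t\to 0$, combining an explicit asymptotic analysis of $L_n(f_t,r)$ with the approximation bound in Theorem~\ref{tm:approx}. Throughout, set $\ell(t):=\log|t|^{-1}$, so that $\ell(t)\to+\infty$.

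First I would analyze the asymptotics of $L_n(f_t,r)$. The key observation is that every $z\in\Fix^*(f^n)$ of exact period $m<n$ is parabolic in the sense that $(f^n)'(z)=((f^m)'(z))^{n/m}=1$, and hence contributes $\log\max\{r,1\}=0$ to the defining sum whenever $r\le 1$. The surviving contribution comes from cycles of exact period $n$, at which the $n$ points of a cycle all share the common multiplier $\lambda$. Since the multipliers $\lambda_j(t)$ are the roots of the multiplier polynomial $p_{d,n}(f_t,T)\in\cO(\mathbb{D}_K)[t^{-1}][T]$, whose coefficients are meromorphic in $t$, a Newton polygon analysis (applied after passing to a ramified cover $t=s^N$ if necessary) produces rationals $\gamma_j\in\bQ$ with $\log|\lambda_j(t)|=-\gamma_j\ell(t)+O(1)$. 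Distinguishing the cases $\gamma_j<0$ (a pole in $t$, contribution $\sim(-\gamma_j)\ell(t)$) from $\gamma_j\ge 0$ (a bounded multiplier, contribution $O(1)$), one obtains for any $r\in(0,\epsilon_{d^n}]$
\[
L_n(f_t,r)=\alpha_n\ell(t)+O_{n,r}(1),\quad\alpha_n:=\frac{1}{d_n}\sum_{j:\,\gamma_j<0}(-\gamma_j)\in\bR_{\ge 0}.
\]

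Next I would transfer the asymptotics to $L(f_t)$ via Theorem~\ref{tm:approx}. Since $\Res(f_t)$ is itself meromorphic in $t$, there exists $\beta\ge 0$ with $-\log|\Res(f_t)|=\beta\ell(t)+O(1)$. With $C_n:=8(d-1)^2\sigma_2(n)/d_n\to 0$ and $c:=(4d^2-2d-1)/(d(2d-2))$, Theorem~\ref{tm:approx} gives
\[
|L(f_t)-L_n(f_t,r)|\le C_n\bigl(|L(f_t)|-c\log|\Res(f_t)|+|\log r|\bigr).
\]
Fixing $n_0$ with $C_{n_0}\le 1/2$ and rearranging produces the \emph{a priori} bound $|L(f_t)|\le 2|L_{n_0}(f_t,r)|+2C_{n_0}(-c\log|\Res(f_t)|+|\log r|)$; dividing by $\ell(t)$ and taking $\limsup$ yields $M:=\limsup_{t\to 0}|L(f_t)|/\ell(t)\le 2\alpha_{n_0}+2C_{n_0}c\beta<\infty$.

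For the final step, applying the two-sided form of Theorem~\ref{tm:approx}, dividing by $\ell(t)$, and taking $\limsup$ and $\liminf$ separately gives, for every $n$ large enough,
\[
\limsup_{t\to 0}\frac{L(f_t)}{\ell(t)}-\liminf_{t\to 0}\frac{L(f_t)}{\ell(t)}\le 2C_n(M+c\beta).
\]
Letting $n\to\infty$ forces $\limsup=\liminf$, so $L(f_t)/\ell(t)$ converges to some $\alpha\in\bR$; the same estimate then gives $|\alpha-\alpha_n|\le C_n(M+c\beta)\to 0$, so $\alpha=\lim_n\alpha_n\ge 0$, which is the required positivity. The main obstacle is securing the \emph{a priori} finiteness of $M$: it does not come from any general principle but must be bootstrapped out of Theorem~\ref{tm:approx} itself via the choice $C_{n_0}\le 1/2$. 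A secondary technical point is justifying the Newton polygon / Puiseux step controlling the absolute values of the multipliers in the non-archimedean meromorphic setting, which should reduce to a direct analysis of $p_{d,n}(f_t,T)$ viewed as a polynomial over the field of meromorphic germs at $0\in\mathbb{D}_K$.
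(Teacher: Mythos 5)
Your proof is correct and follows the same overall architecture as the paper's: approximate $L(f_t)$ by $L_n(f_t,r)$ via Theorem~\ref{tm:approx}, show that $L_n(f_t,\cdot)/\log|t|^{-1}$ has a limit $\alpha_n\ge 0$, and sandwich $\limsup$ and $\liminf$ of $L(f_t)/\log|t|^{-1}$ between $\alpha_n\pm O(\sigma_2(n)/d_n)$ before letting $n\to\infty$. Two local steps differ. First, for the a priori bound $|L(f_t)|=O(\log|t|^{-1})$ you bootstrap it out of Theorem~\ref{tm:approx} itself by absorbing the $|L(f_t)|$ term on the right (choosing $C_{n_0}\le 1/2$); the paper instead invokes the direct estimate of Lemma~\ref{badbutexplicit}, $|L(f)|\le -\log|d|-2\log|\Res F_t|+(4d+2)\log|F_t|-\log|C_{F_t}|$, each term being $O(\log|t|^{-1})$ by meromorphy (at the cost of checking $C_{F_t}\neq 0$ near $t=0$, which your route avoids). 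Second, for the existence of $\alpha_n$ you run a Newton-polygon analysis on the roots of $p_{d,n}(f_t,T)$; the paper reads it off more directly from the Gauss-lemma identity \eqref{eq:heighttruncate}, $L_n(f,r)=\frac{1}{nd_n}\log\max_j|\sigma^*_{j,n}(f)|r^{d_n-j}$, applied to the meromorphic functions $t\mapsto\sigma^*_{j,n}(f_t)$ — this bypasses any discussion of individual multipliers and of ramified covers. Both variants are sound; just note that in your final sandwich the term $C_n|\log\epsilon_{d^n}|$ is not $o(1)$ in $n$, but it is killed upon dividing by $\log|t|^{-1}$ and letting $t\to0$ first, exactly as in the paper.
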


Note that Favre~\cite{Favre-degeneration} has proved the same statement for meromorphic families of endomorphisms of $\mathbb{P}^k(\mathbb{C})$ using the machinery of \emph{hybrid spaces}.
Note also that, in the case of families of polynomials, the much more stronger assertion that the (subharmonic) function $t\mapsto L(f_t)-\alpha\log|t|^{-1}$ on $\mathbb{D}_K^*$
extends continuously around $t=0$ has been established 
in \cite{conti} by Favre and the first author.
Note that the continuity statement is not true in full generality. Indeed, DeMarco and the second author~\cite{disconti} provide a family of examples of meromorphic families of complex rational maps for which $L(f_t)-\alpha\log|t|^{-1}$ is unbounded near $0$.

\subsection{Global fields and height functions}
A {\itshape global field} is a field $k$ which is canonically equipped with
\begin{enumerate}
\item a proper set $M_k$ of all {\itshape places} of $k$,
\item a family $(|\cdot|_v)_{v\in M_k}$,
where for each $v\in M_k$, $|\cdot|_v$ is a non-trivial absolute value
of $k$ representing $v$. A place $v\in M_k$ is said to be an {\itshape infinite place}
of $k$ if $|\cdot|_v$ is archimedean; otherwise, $v\in M_k$ is said to be 
a {\itshape finite place} of $k$,
\item a family $(N_v)_{v\in M_k}$ of positive integers
\end{enumerate}
such that for every $z\in k^\times$, we have $|z|_v=1$
for all but finitely many places $v\in M_k$ and the so called \emph{product formula} holds:
\[\prod_{v\in M_k}|z|_v^{N_v}=1.\]
Note in addition that if $k'$ is an algebraic extension of $k$, then the set of places $w\in M_{k'}$ that extend places of $v$, denoted by $w|v$, is finite and that they satisfy the compatibility condition
\[\sum_{w|v}N_w=[k':k].\]
Number fields are the archetype of global fields. In this paragraph, we give applications of Theorem~\ref{tm:approx1} over number fields.

\medskip

For each $v\in M_k$, let $k_v$ be the completion
of $k$ with respect to $|\cdot|_v$ and $\bC_v$ the completion of an algebraic
closure of $k_v$ with respect to $|\cdot|_v$, and
we fix an embedding of $\overline{k}$ in $\bC_v$ which extends that of $k$.
By convention, the dependence of any local quantity
induced by $|\cdot|_v$ on each $v\in M_k$ is emphasized 
by adding the suffix $v$ to it, e.g., we will denote $L(f)_v$ for the Lyapunov exponent of $f\in\mathrm{Rat}_d(\bar{k})$ acting 
on $\sP^1(\bC_v)$.

\medskip 

Choose an integer $N\geq1$.
The \emph{naive height} function $h_{\mathbb{P}^N,k}$
on $\bP^N(\overline{k})$ associated to $k$ 
is defined for every $x=[x_0:\ldots:x_n]\in\mathbb{P}^N(\bar{k})$ by
\begin{gather*}
 h_{\mathbb{P}^N,k}(x)
=\frac{1}{[k':k]}\sum_{v\in M_{k'}}N_v\log\max_{0\leq j\leq N}|x_j|_v,
\end{gather*}
where $k'$ is any finite extension of $k$ 
such that $x\in\mathbb{P}^N(k')$.

\medskip

Let $k$ be a global field, and fix an integer $d>1$. We define a height function $h_{d,k}$
on $\Rat_d(\bar{k})$ as follows: we identify $\mathrm{Rat}_d(\bar{k})$ with $\bP^{2d+1}(\bar{k})\setminus\{\mathrm{Res}=0\}$, which is a Zariski open subset of
$\bP^{2d+1}(\overline{k})$. For every $f=[a_0,\ldots,a_d,b_0,\ldots,b_d]\in\Rat_d(\bar{k})$, we let
\[
h_{d,k}(f):=h_{\mathbb{P}^{2d+1},k}([a_0,\ldots,a_d,b_0,\ldots,b_d]).
\]

\smallskip

Pick $f\in\Rat_d(k)$. The \emph{Call--Silverman canonical height function} $\hat{h}_{f,k}$ of $f$
on $\bP^1(\overline{k})$ {\itshape relative to} $k$ is defined by
\begin{gather*}
\hat{h}_{f,k}(z):=\lim_{n\to\infty}\frac{h_{\bP^1,k}(f^n(z))}{d^n}\in\bR,
\quad z\in\bP^1(\overline{k}).
\end{gather*}
The \emph{critical height} function $h_{\operatorname{crit},k}$
on $\Rat_d(\overline{k})$ relative to $k$ is defined by
\begin{gather*}
h_{\operatorname{crit},k}(f)
:=\frac{1}{[k':k]}\sum_{c\in\mathrm{Crit}(f)}\hat{h}_{f,k'}(c),
\quad f\in\Rat_d(\overline{k}),
\end{gather*}
where $k'$ is any finite extension of $k$
such that $f\in\Rat_d(k')$ and $\mathrm{Crit}(f)\subset\bP^1(k')$.
The function $h_{\operatorname{crit},k}$ on $\Rat_d(\overline{k})$ descends to a function on $\mathcal{M}_d(\overline{k})$, which is still denoted by $h_{\operatorname{crit},k}$.

Although the critical height functions defined above are named as ``height'' functions, it is a priori not clear at all whether they are height functions, in any possible sense.

\subsection{Applications over number fields}

Fix $d>1$. Silverman conjectured~\cite[Conjecture 6.29]{silverman-moduli} that the absolute critical height $h_{\mathrm{crit},\mathbb{Q}}$ is commensurable to any ample height on $\mathcal{M}_d$ away from the flexible Latt\`es locus. Ingram~\cite{Ingram-critheight} recently proved this conjecture. We give here a quantitative version of Ingram's result.

\medskip

Let us be more specific. Recall that $f\in\Rat_d(\mathbb{C})$ is a \emph{Latt\`es map}
if there exists a complex elliptic curve $E$ and a homomorphism $\phi:E\to E$ and a finite morphism $\pi: E\to \bP^1(\mathbb{C})$ such that 
\[f\circ \pi=\pi\circ \phi \ \text{ on} \ E.\]
We say $f$ is flexible if $\phi$ is given by $\phi(P)=[m]P+P_0$ for some $m\in\mathbb{Z}$ and some $P_0\in E$. Let $\mathcal{L}_d\subset\mathcal{M}_d$ be the flexible Latt\`es locus; it is an affine curve which is defined over $\mathbb{Q}$.

We also fix an embedding $\iota:\mathcal{M}_d\hookrightarrow\bA^N$
of the moduli space in some affine space and let $\overline{\mathcal{M}}_d$ be
the Zariski closure of $\iota(\mathcal{M}_d)$ in the ambient projective space $\bP^N$. The line bundle $D:=\mathcal{O}_{\mathbb{P}^N}(1)|_{\overline{\mathcal{M}}_d}$ defines an ample line bundle on $\overline{\mathcal{M}}_d$.
Let also 
\[h_{\mathcal{M}_d,D}:=h_{\mathbb{P}^N,\mathbb{Q}}|_{\iota(\mathcal{M}_d)}=h_{\mathbb{P}^N,\mathbb{Q}}\circ\iota+O(1),\]
and set
\[C_1(d,D):=\frac{1}{8(d-1)}\left(\frac{\|\mu_{\mathrm{bif}}\|_{\mathcal{M}_d}}{\deg_D(\mathcal{M}_d)}\right)^{1/{(2d-2)}} \ \ \text{and} \ \ C_2(d,D)=2(d-1)\frac{\|T_\mathrm{bif}\|_{\mathcal{M}_d,D}}{\deg_D(\mathcal{M}_d)},\]
where $T_\mathrm{bif}$ and $\mu_{\mathrm{bif}}$ are the \emph{bifurcation current} and the \emph{bifurcation measure} (see Definition~\ref{mu_and_T} below) and ${\|T_\mathrm{bif}\|_{\mathcal{M}_d,D}}$ and $\|\mu_{\mathrm{bif}}\|_{\mathcal{M}_d}$ denote their respective mass. Of course, the mass of $T_\mathrm{bif}$ depends on the choice of the embedding. We prove

\begin{mainth}\label{tm:critheight}
For every integer $d>1$, there exists a constant $A\ge 0$ such that
\begin{gather*}
 C_1(d,D)h_{\mathcal{M}_d,D}-A\leq h_{\operatorname{crit},\bQ}\leq C_2(d,D)h_{\mathcal{M}_d,D}+A
\end{gather*}
on $(\mathcal{M}_d\setminus\mathcal{L}_d)(\bar{\bQ})$. Moreover, the constant $A$ can be computed explicitly.
\end{mainth}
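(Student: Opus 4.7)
The plan is to combine Theorem~\ref{tm:approx} (and its archimedean counterpart \cite[Theorem~3.1]{GOV}) with height-theoretic tools on $\overline{\mathcal{M}}_d$, so as to reduce the critical-height comparison to a cohomological comparison between the asymptotic pull-back class of the multiplier map $\tilde\Lambda_n$ and the ample class $D$. After fixing a number field $k$ over which $f\in\Rat_d(\bar\bQ)$ is defined, the first step is to establish the adelic Lyapunov identity
\[
h_{\operatorname{crit},\bQ}(f) \,=\, \frac{1}{[k:\bQ]}\sum_{v\in M_k} N_v\, L(f)_v \,+\, (\text{explicit resultant correction}),
\]
which follows from decomposing the Call--Silverman canonical height of critical points into local Green functions at each place and using the known pointwise relation between $L(f)_v$ and the Green functions of the critical set on $\sP^1(\bC_v)$.

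Next, applying Theorem~\ref{tm:approx} at each non-archimedean place and \cite[Theorem~3.1]{GOV} at each archimedean place, we can replace every $L(f)_v$ by $L_n(f,r_v)_v$, with an error controlled by $\sigma_2(n)/d_n$ times a local quantity involving $L(f)_v$, $\log|\Res(f)|_v$ and $\log r_v$. Summing over $v\in M_k$, the product formula eliminates the $\log r_v$ and resultant contributions up to a globally bounded term, yielding
\[
h_{\operatorname{crit},\bQ}(f) \,=\, \frac{1}{n\,d_n}\, h_{\bP^{d_n},\bQ}\!\bigl(\tilde\Lambda_n(f)\bigr) \,+\, O\!\left(\frac{\sigma_2(n)}{d_n}\bigl(h_{\mathcal{M}_d,D}(f)+1\bigr)\right),
\]
with implicit constants depending only on $d$ and the chosen embedding $\iota$. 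The cohomological input is then a Bassanelli--Berteloot-type limit $\tfrac{1}{n d_n}\tilde\Lambda_n^*\omega_{FS}\to 2(d-1)\,T_{\operatorname{bif}}$ as currents on $\mathcal{M}_d(\bC)$, which identifies the asymptotic class of $\tilde\Lambda_n^*\mathcal{O}(1)$ on $\overline{\mathcal{M}}_d$ with $2(d-1)nd_n\cdot T_{\operatorname{bif}}$. Intersecting with $D^{2d-3}$ produces the mass $\|T_{\operatorname{bif}}\|_{\mathcal{M}_d,D}$ and gives the upper bound with constant $C_2(d,D)=2(d-1)\|T_{\operatorname{bif}}\|_{\mathcal{M}_d,D}/\deg_D(\mathcal{M}_d)$, while the Khovanskii--Teissier concavity
\[
\|T_{\operatorname{bif}}\|_{\mathcal{M}_d,D}^{\,2d-2} \,\le\, \|\mu_{\operatorname{bif}}\|_{\mathcal{M}_d}\cdot (\deg_D\mathcal{M}_d)^{2d-3}
\]
produces the lower constant $C_1(d,D)$ with its characteristic $(2d-2)$-th root. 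Choosing $n\asymp\log(h_{\mathcal{M}_d,D}(f)+2)$ absorbs the $\sigma_2(n)/d_n$ error into the explicit additive constant $A$.

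The main obstacle is the lower bound. On the flexible Latt\`es locus $\mathcal{L}_d$ the height $h_{\operatorname{crit},\bQ}$ vanishes identically, so removing $\mathcal{L}_d$ is unavoidable. More subtly, $T_{\operatorname{bif}}$ is only a positive closed current on $\mathcal{M}_d(\bC)$ and not a divisor class; the ``linearisation'' of the class of $\tilde\Lambda_n^*\mathcal{O}(1)$ in terms of $T_{\operatorname{bif}}$ must therefore be carried out either by approximating $T_{\operatorname{bif}}$ by nef $\bR$-divisor classes on $\overline{\mathcal{M}}_d$, or by working directly with the arithmetic volume of $\tilde\Lambda_n^*\mathcal{O}(1)$, tracking at each stage how the ``bad locus'' where the comparison degenerates is controlled. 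Here we invoke Ingram's theorem~\cite{Ingram-critheight} to ensure that this bad locus is contained in $\mathcal{L}_d$, so that the role of Theorem~\ref{tm:approx} in the argument is precisely to convert Ingram's qualitative comparison into the effective statement above with the explicit constants $C_1$, $C_2$, and the computable $A$.
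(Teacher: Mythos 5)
Your overall architecture (adelic summation of the approximation formulas to relate $h_{\operatorname{crit},\bQ}$ to the average multiplier height, then an intersection-theoretic comparison of $\tilde\Lambda_n^*\mathcal{O}(1)$ with $D$ governed by $\|T_{\mathrm{bif}}\|_{\mathcal{M}_d,D}$ and $\|\mu_{\mathrm{bif}}\|_{\mathcal{M}_d}$) is the paper's, but two steps as written do not go through.

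First, your error term $O\bigl(\tfrac{\sigma_2(n)}{d_n}(h_{\mathcal{M}_d,D}(f)+1)\bigr)$ conflates two different heights. What the approximation formulas actually produce, after summing over places, is an error of the form $\tfrac{\sigma_2(n)}{d_n}\,h_{d}(f)+C_n$, where $h_d$ is the naive height on $\Rat_d(\bar\bQ)$ and $C_n$ grows at least like $d^{2n}$ (it absorbs the truncation at the $\le d^n$ places of small residue characteristic, where $\varepsilon_{d^n,v}$ can be as small as $d^{-nd^n}$). To convert $h_d(f)$ into a multiple of $h_{\mathcal{M}_d,D}([f])$ you must pass to a representative of minimal height and prove an explicit bound $h^{\mathrm{min}}\le(2d-2)h_{\mathcal{M}_d,D}+O(1)$; this is a genuine, nontrivial step in the paper (Lemma~\ref{lm:min}, itself an application of Siu's criterion to normalized families with marked fixed points), and it is where part of the slack in the constants $C_1(d,D)$, $C_2(d,D)$ is consumed. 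Your proposal skips it entirely. Second, the final absorption step fails: with $n\asymp\log(h_{\mathcal{M}_d,D}(f)+2)$ one has $\tfrac{\sigma_2(n)}{d_n}\,h\asymp(\log h)^2\log\log\log h$, which is unbounded, and worse, the $n$-dependent additive constants ($C_n\gtrsim d^{2n}$ from the finite places, plus the $O_n(1)$'s from functoriality and effectivity of the big divisors $M_1 n(d^n+1)\widehat D-M_2\widehat E_n$) then blow up with $h(f)$, so no uniform additive $A$ results. The correct mechanism is to fix $n$ once and for all, depending only on $d$ and $D$, so that $C(d)\sigma_2(n)/d^n\le\tfrac12 C_1(d,D)$, and to absorb the error \emph{multiplicatively}; this is precisely why $C_1(d,D)$ carries the factor $\tfrac{1}{8(d-1)}$ and $C_2(d,D)$ the factor $2(d-1)$ (your claimed limit $\tfrac{1}{nd_n}\tilde\Lambda_n^*\omega_{FS}\to 2(d-1)T_{\mathrm{bif}}$ is off by that factor: the limit is $T_{\mathrm{bif}}=dd^cL$ itself, and the $2(d-1)$ and the $(2d-2)$-th root enter only through Siu's criterion and the $h^{\mathrm{min}}$ bound).

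On the degeneracy locus: invoking Ingram's theorem to show that every positive-dimensional fiber component of $\tilde\Lambda_n$ lies in $\mathcal{L}_d$ does work (boundedness of $h_{\mathrm{crit}}$ on such a component contradicts Ingram's lower bound), but it is a genuinely different route from the paper, which establishes this internally (Proposition~\ref{tm:McMmap} via the mass estimates of Proposition~\ref{lm:mass} and the transversality argument of Lemma~\ref{lm:criterion}). The paper's route yields an independent proof of Ingram's comparison; yours yields only a quantification of it, and the threshold $n_0$ — hence $A$ — inherits whatever ineffectivity is present in Ingram's constant, which undercuts the claim that $A$ is explicitly computable.
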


The strategy of the proof is similar to that of Ingram~\cite{Ingram-critheight}. In his proof, he relates the critical height to the height of a single multiplier. Instead, using our quantitative approximation formulas of $L(f)_v$ 
for each place, we relate the critical height with the average of the heights of \emph{all} multipliers of cycles of period dividing a given $n$. Recall that
\[\tilde\Lambda_n:\mathcal{M}_d\to\mathbb{A}^{d^n+1}\]
denote the map which, to any conjugacy class $[f]$ associates the symmetric functions of the multipliers of periodic of period dividing $n$.

One of the interests of our strategy is that our relation between the critical and multiplier heights can be used in both ways.
As an example of this idea, we get that the multipliers of formal exact period $n$ define a moduli height for all $n$ large enough: we prove that for all $n\geq n_1$, there exists a computable constant $C$ depending only on $n$ and $d$ such that
\[\frac{C_1(d,D)}{2}h_{\mathcal{M}_d,D}-C\leq \frac{1}{nd_n}h_{\mathbb{P}^{d_n},\bQ}\circ\tilde\Lambda_{n} \leq \left(C_2(d,D)+\frac{C_1(d,D)}{2}\right)h_{\mathcal{M}_d,D}+C,\]
on $(\mathcal{M}_d\setminus\mathcal{L}_d)(\bar{\mathbb{Q}})$. Using classical finiteness properties of ample height functions, we prove

\begin{mainth}[Multipliers of exact period $n$ as moduli]\label{tm:McMexact}
There exists an integer $n_1\geq1$ which depends only on $d$, the mass $\|\mu_{\mathrm{bif}}\|_{\mathcal{M}_d}$ and $\|T_{\mathrm{bif}}\|_{\mathcal{M}_d,D}$ and the degree $\deg_D(\mathcal{M}_d)$ and such that for all $n\geq n_1$, the exact multiplier map $\tilde\Lambda_{n}:\mathcal{M}_d(\mathbb{C})\rightarrow\mathbb{C}^{d_n}$ is finite-to-one outside of the curve of flexible Latt\`es locus $\mathcal{L}_d(\mathbb{C})$.
\end{mainth}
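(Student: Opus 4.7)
The plan is to deduce Theorem~\ref{tm:McMexact} directly from the quantitative height comparison displayed just above its statement. Let $n_1$ be the integer appearing in that comparison (which indeed depends only on $d$, $\|\mu_{\mathrm{bif}}\|_{\mathcal{M}_d}$, $\|T_{\mathrm{bif}}\|_{\mathcal{M}_d,D}$ and $\deg_D(\mathcal{M}_d)$, because so do $C_1(d,D)$ and $C_2(d,D)$), so that for every $n\geq n_1$ there is a constant $C=C(n,d)$ with
\[
\frac{C_1(d,D)}{2}\,h_{\mathcal{M}_d,D}(x)-C\;\leq\; \frac{1}{nd_n}\,h_{\bP^{d_n},\bQ}\bigl(\tilde\Lambda_{n}(x)\bigr)
\]
for every $x\in(\mathcal{M}_d\setminus\mathcal{L}_d)(\bar{\bQ})$. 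Fix such an $n\geq n_1$. Note that $C_1(d,D)>0$ because $\|\mu_{\mathrm{bif}}\|_{\mathcal{M}_d}>0$.

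First I would prove that $\tilde\Lambda_n$ has finite $\bar\bQ$-fibres outside $\mathcal{L}_d$. Introduce
\[
E_n:=\bigl\{x\in\mathcal{M}_d\,:\,\dim_{x}\tilde\Lambda_n^{-1}(\tilde\Lambda_n(x))\geq 1\bigr\},
\]
which is closed in $\mathcal{M}_d$ by upper semicontinuity of fibre dimension, and is defined over $\bQ$ since $\tilde\Lambda_n$ and $\mathcal{M}_d$ are. The claim is that $E_n(\bar\bQ)\subset\mathcal{L}_d(\bar\bQ)$. Suppose, on the contrary, that some $x_{0}\in E_n(\bar\bQ)\setminus\mathcal{L}_d(\bar\bQ)$ exists. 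Then $\tilde\Lambda_n^{-1}(\tilde\Lambda_n(x_0))$ contains an irreducible positive-dimensional component $C$ through $x_0$, defined over some number field. By construction $\tilde\Lambda_n|_C$ is constant, so $h_{\bP^{d_n},\bQ}\circ\tilde\Lambda_n$ is bounded on $C(\bar\bQ)$; together with the displayed lower bound, this forces $h_{\mathcal{M}_d,D}$ to be bounded on $(C\setminus\mathcal{L}_d)(\bar\bQ)$. However, the restriction of $D$ to $C$ is ample (since $D$ is ample on $\overline{\mathcal{M}}_d$), and for an ample divisor on a positive-dimensional variety defined over $\bar\bQ$, Northcott's theorem implies that the associated Weil height is unbounded on the $\bar\bQ$-points of any non-empty Zariski open subset. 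This is the desired contradiction.

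Finally I would upgrade to $\mathbb{C}$-points. Since $E_n$ and $\mathcal{L}_d$ are both closed $\bQ$-subvarieties of $\mathcal{M}_d$ with $E_n(\bar\bQ)\subset\mathcal{L}_d(\bar\bQ)$ and $\bar\bQ$-points are Zariski dense in any $\bQ$-variety, I conclude $E_n\subset\mathcal{L}_d$ as reduced subschemes, hence $E_n(\mathbb{C})\subset\mathcal{L}_d(\mathbb{C})$. Equivalently, at every $x\in\mathcal{M}_d(\mathbb{C})\setminus\mathcal{L}_d(\mathbb{C})$ the fibre $\tilde\Lambda_n^{-1}(\tilde\Lambda_n(x))$ has local dimension $0$ at $x$, so the restriction $\tilde\Lambda_n\colon\mathcal{M}_d(\mathbb{C})\setminus\mathcal{L}_d(\mathbb{C})\to\mathbb{C}^{d_n}$ is quasi-finite and therefore finite-to-one. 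The main obstacle in this strategy is not the geometric/Northcott deduction above but the quantitative height inequality that feeds it: it is the quantified form of Ingram's comparison, obtained by summing Theorem~\ref{tm:approx} (and its archimedean counterpart) over all places of $\bQ$, and it is precisely where the Lyapunov-to-multiplier approximation plays its decisive role.
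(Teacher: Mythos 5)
Your argument is correct and follows essentially the same route as the paper: the paper's proof of Theorem~\ref{tm:McMexact} first re-derives the two-sided height comparison you quote (from Proposition~\ref{prop:nbfield}, Lemma~\ref{lm:min} and Theorem~\ref{tm:critheight}) and then runs exactly your Northcott argument on a putative positive-dimensional fibre component not contained in $\mathcal{L}_d$, followed by the same descent from $\bar{\bQ}$-points to $\mathbb{C}$-points. The only difference is that you cite the displayed inequality from the introduction as a black box where the paper re-establishes it as the first step of the proof; your passage to complex points via the $\bQ$-rationality of the excess-fibre-dimension locus $E_n$ is, if anything, spelled out more carefully than the paper's concluding one-line remark.
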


Theorem~\ref{tm:McMexact} improves McMullen's finiteness theorem\cite[Corollary 2.3]{McMullen4} which states that, for every integer $d>1$, if $n$ is large enough,
the map
\[\Lambda_n:\mathcal{M}_d(\mathbb{C})\to\mathbb{C}^{d^n+1}\]
given by the symmetric functions of the multipliers of periodic points $z\in \mathrm{Fix}(f^n)$ of period dividing $n$
is finite-to-one on $(\mathcal{M}_d\setminus\mathcal{L}_d)(\mathbb{C})$.

\subsection{Applications over global function fields}
Let $X$ be a connected smooth projective curve define over $\bC$ and let $k:=\bC(X)$ be the function field of $X$.
In this case, there is a one-to-one correspondence between closed points of $X$ and places of the field $k$. If $v\in M_k$ corresponds to the point $x$, we choose
$|\cdot|_v$ to be defined by $|g|_v=e^{-\mathrm{ord}_x(g)}$ for all $g\in \bC(X)$.  
Moreover, we can take $N_v=1$ for every $v\in M_k$. The naive height function on $\mathbb{P}^N(\overline{k})$ associated to $k$ can be computed as
\[
h_{\mathbb{P}^N,k}(x)=\max_{0\leq j\leq N}\frac{\deg(x_j)}{\deg(\pi)},
\quad x=[x_0:\cdots:x_N],
\]
where $\pi:Y\rightarrow X$ is any finite morphism such that
that $x\in\mathbb{P}^N(\bC(Y))$.
 In particular, the canonical height function $\hat{h}_f$ of a rational map $f$ defined over $k$ can be computed as
\begin{gather*}
x\mapsto\hat{h}_f(x)=\lim_{n\to\infty}\frac{1}{d^n}\frac{\deg(f^n(x))}{\deg(\pi_x)}, 
\end{gather*}
where $\pi_x:Y\rightarrow X$ is any finite branched cover such that $x\in\mathbb{P}^1(\mathbb{C}(Y))$.

We say $f\in\Rat_d(k)$ is \emph{isotrivial} 
if there exists a finite branched cover $Y\to X$ and
a M\"obius transformation $M\in\PGL(2)$ which is defined over the finite extension $k'=\bC(Y)$ of $k$ such that the rational map 
$M\circ f\circ M^{-1}$ is actually defined over $\bC$. This is equivalent to the fact that the specialization $(M\circ f\circ M^{-1})_t\in\Rat_d(\bC)$ are
independent of $t\in Y$. 

\smallskip

We say that $f\in\Rat_d(k)$ is \emph{affine}
if there exists a non-isotrivial elliptic curve $E$ defined over $k$ and a homomorphism $\phi:E\to E$ given by $\phi(P)=[m]P+P_0$ for some $m\in\mathbb{Z}$ and some $P_0\in E$ and a degree $2$ morphism $E\to \bP^1_k$ such that
\[f\circ\pi=\pi\circ f\ \text{ on} \ E.\]
Equivalently, we say $f$ is affine if 
for every $t\in X$, the specialization $f_t\in\Rat_d(\bC)$ is a Latt\`es map, i.e.
if there exists a complex elliptic curve $E_t$, a double branched cover $\pi_t:E_t\to\bP^1(\bC)$ and an integer $m\in\mathbb{Z}$ and $P_{0,t}\in E_t$ such that $\pi_t([m]P+P_{0,t})=f_t\circ\pi_t(P)$ on $E$. In particular, $d=m^2$.

Applying Theorem~\ref{tm:approx} to the action of $f$ on $\sP^1(\bC_v)$ 
for all $v\in M_k$, together with
previously known results, we easily establish the following quantitative
characterization of those exceptional properties for such a rational map $f$.

\begin{mainth}\label{mainthm:functionfield}
Let $X$ be a connected smooth projective curve defined over $\bC$, and
fix an integer $d>1$. For every $f\in\Rat_d(\bC(X))$,
the following assertions are equivalent:
\begin{enumerate}
\item $f$ is either isotrivial or affine,
\item $\max_{1\leq j\leq d_n}\deg(\sigma_{j,n}^*(f))=o(nd^n)$ as $n\rightarrow\infty$,
\item $\max_{1\leq j\leq d_n}\deg(\sigma_{j,n}^*(f))=O(n\sigma_2(n))$ as $n\rightarrow\infty$.
\end{enumerate}
\end{mainth}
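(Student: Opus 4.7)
The plan is to apply Theorem~\ref{tm:approx} simultaneously at every place of $k=\bC(X)$ and translate the resulting place-wise estimates into a global height statement via the Gauss norm identity for the multiplier polynomial. Since every place $v$ of $\bC(X)$ is non-archimedean with residue field of characteristic zero, we have $\epsilon_{d^n}=1$ and Theorem~\ref{tm:approx} applies at each $v$ with $r=1$. Multiplicativity of the Gauss norm yields
\[
\max_{0\le j\le d_n}|\sigma_{j,n}^*(f)|_v = \prod_{z\in\Fix^*(f^n)}\max(1,|(f^n)'(z)|_v)=\exp(nd_n L_n(f,1)_v)
\]
(using $\sigma_{0,n}^*=1$). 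Summing the logarithm over $v$ identifies
\[
h_{\bP^{d_n},k}(\tilde\Lambda_n(f))=nd_n\sum_v L_n(f,1)_v,
\]
and combining this with Theorem~\ref{tm:approx} at every $v$---using that $\sum_v |L(f)_v|$ and $\sum_v \log\max(1,|\Res(f)|_v^{-1})$ are both finite since $f$ has good reduction outside finitely many places---gives
\[
h_{\bP^{d_n},k}(\tilde\Lambda_n(f)) = nd_n\Sigma(f) + O(n\sigma_2(n)),\qquad \Sigma(f):=\sum_v L(f)_v\ge 0,
\]
the implicit constant depending only on $f$.

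For (i)$\Rightarrow$(iii): if $f$ is isotrivial, $\PGL_2$-invariance of multipliers gives $\sigma_{j,n}^*(f)=\sigma_{j,n}^*(f_0)\in\bC$ for any conjugate $f_0\in\Rat_d(\bC)$; if $f$ is affine, all periodic multipliers equal $\pm m^n$ with $m\in\bZ$, so again $\sigma_{j,n}^*(f)\in\bC$. Either way $\max_j\deg(\sigma_{j,n}^*(f))=0$, much stronger than (iii). The implication (iii)$\Rightarrow$(ii) is trivial since $n\sigma_2(n)=O(n^3\log\log n)=o(nd^n)$ for $d\ge 2$. I prove (ii)$\Rightarrow$(i) by contrapositive: assuming $f$ is neither isotrivial nor affine, the characterization discussed below produces a place $v_0$ with $L(f)_{v_0}>0$. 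For $n$ large Theorem~\ref{tm:approx} gives $L_n(f,1)_{v_0}\ge \tfrac12 L(f)_{v_0}$, so the Gauss norm identity forces $\max_j\log|\sigma_{j,n}^*(f)|_{v_0}\ge \tfrac12 nd_n L(f)_{v_0}$. Choosing $j^*(n)$ realizing this maximum and using $\deg(g)=\sum_v\log\max(1,|g|_v)\ge \log\max(1,|g|_{v_0})$ we obtain $\max_j\deg(\sigma_{j,n}^*(f))\ge \tfrac12 nd_n L(f)_{v_0}$, contradicting (ii) since $d_n/d^n\to 1$.

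The main obstacle is thus the characterization ``$f$ is isotrivial or affine $\iff$ $L(f)_v=0$ at every place $v$'' used in the $(ii)\Rightarrow(i)$ step. The forward direction is elementary: conjugation invariance of $L(f)_v$ handles the isotrivial case, while the integer structure $\pm m^n$ of the multipliers of a flexible Latt\`es map (with $m\in\bZ\subset\bC(X)$ of absolute value $1$ at every non-archimedean place) handles the affine case. The converse must be drawn from the existing function-field literature, essentially by combining Baker's rigidity theorem for canonical heights with the classification of rational maps having potentially good reduction at every place; correctly identifying and invoking the precise statement, and checking that it captures the full Latt\`es/isotrivial dichotomy, is the most delicate aspect of the argument.
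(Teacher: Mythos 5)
Your quantitative skeleton is exactly the paper's: apply the non-archimedean approximation theorem at every place of $k=\bC(X)$ with $r=1$ (legitimate since the residue characteristic is $0$, so $\epsilon_{d^n}=1$), use the Gauss-norm identity $\max_j|\sigma^*_{j,n}(f)|_v=\exp(nd_nL_n(f,1)_v)$ to convert the place-wise sums into $h_{\bP^{d_n},k}(\tilde\Lambda_n(f))$, and control the error by summing the resultant and Lyapunov terms over the finitely many places of bad reduction. This is precisely Proposition~\ref{prop:fnctfield}, and your implications (i)$\Rightarrow$(iii)$\Rightarrow$(ii) and the contrapositive reduction of (ii)$\Rightarrow$(i) to the existence of a place $v_0$ with $L(f)_{v_0}>0$ are sound (one small point: since individual $L(f)_v$ may be negative, the passage from ``$f$ neither isotrivial nor affine'' to ``some $L(f)_{v_0}>0$'' should go through the identity $\sum_vL(f)_v=h_{\Crit,k}(f)$ of Lemma~\ref{critheight-Lyap} and positivity of the critical height, which you use implicitly).

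The genuine gap is the step you yourself flag: the implication ``$h_{\Crit,k}(f)=0$ and $f$ not affine $\Rightarrow$ $f$ isotrivial.'' You propose to extract it from Baker's rigidity theorem together with a classification of maps with potentially good reduction everywhere, but this does not close the argument as stated. Baker's theorem (for non-isotrivial $f$, $\hat h_f(x)=0$ iff $x$ is preperiodic) only yields that $f$ is postcritically finite over $k$; to conclude isotriviality from there one still needs a Thurston/McMullen-type rigidity statement for algebraic families of PCF maps, which is exactly the nontrivial input. Moreover, vanishing of the critical height is not the same as (potentially) good reduction at every place, so the second result you invoke is not the relevant one. The paper closes this step by a different and complete chain: $h_{\Crit,k}(f)=0$ is equivalent to boundedness of the degrees of all critical orbits, hence (by \cite[Theorem~1.4]{DeMarco-stable}) to passivity of every critical point on $X\setminus S$, hence (by \cite{MSS}) to $J$-stability of the family over $X\setminus S$, and for a non-affine family $J$-stability forces isotriviality by McMullen's Lemma~2.1 and Theorem~2.2 of \cite{McMullen4}. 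Without supplying this chain (or a correctly identified substitute), the central implication (ii)$\Rightarrow$(i) remains unproven.
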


Note finally that a similar statement over the function field of a normal irreducible projective variety defined over $\mathbb{C}$ can be obtained, applying the same strategy word by word, see~\ref{sec:higherdim}.

\subsection*{Structure of the paper}

The paper is organized as follows: In Section~2 we recall classical facts about the dynamics of rational maps over any metrized field and give preliminary technical results. 
Section~3 is dedicated to the proof of Theorem~\ref{tm:approx} and~\ref{tm:degenerate}.
We then focus on estimates over global function fields in Section~4. Section~5 is concerned with the proof of Theorem~\ref{mainthm:functionfield}.
The applications of the archimedean approximation formula are given in Section~6 and Section~7 and~8 are dedicated to the proofs of Theorem~\ref{tm:critheight} and~\ref{tm:McMexact}.

\subsection*{Acknowledgment}
We would like to thank Patrick Ingram for useful discussions and for providing the reference~\cite{BombieriGubler}, and Nguyen-Bac Dang for useful discussions concerning Siu's numerical bigness criterion. The second author also thanks the LAMFA and the Universit\'e de Picardie Jules Verne, where he was visiting in autumn 2017 and this work grew up.

\section{Background and key algebraic estimates}

\subsection{The Berkovich projective line and the dynamics of rational maps}
Let $K$ be an algebraically closed field that is complete with respect to
a non-trivial absolute value $|\cdot|$. When $K$ is non-archimedean, 
let $\cO_K$ be the ring of $K$-integers $\cO_K:=\{z\in K:|z|\le 1\}$. The residual characteristic of $K$ is by definition
the characteristic of the residue field of $\cO_K$.

Let $\|\cdot\|$ be the maximal norm
on $K^2$ (when $K$ is non-archimedean) or the Hermitian norm on $K^2$
(when $K$ is archimedean, i.e., $K\cong\bC$).
In the rest of this section, 
for simplicity we denote $\bP^1(K)$ by $\bP^1$.
With the wedge product $(p_0,p_1)\wedge(q_0,q_1):=p_0q_1-p_1q_0$ on $K$,
the {\itshape normalized} chordal metric $[z,w]$ on $\bP^1$
is defined by 
\begin{gather*}
 [z,w]:=\frac{|p\wedge q|}{\|p\|\cdot\|q\|},\quad\text{where }p\in\pi^{-1}(z),q\in\pi^{-1}(w), 
\end{gather*}
on $\bP^1$ (and as a function on $\bP^1\times\bP^1$).
The subgroup $U_K$ in the linear fractional transformation group $\PGL(2,K)$ 
defined by $\PGL(2,\cO_K)$ (when $K$ is non-archimedean) or $\PSU(2,K)$
(when $K$ is archimedean) 
acts isometrically on $(\bP^1,[z,w])$.

Let us recall some details on 
the {\itshape Berkovich} projective line $\sP^1=\sP^1(K)$.
When $K$ is archimedean, then $\sP^1$ coincides with $\bP^1$, so 
we assume $K$ is non-archimedean until the end of this paragraph. Then
$\sP^1$ is a compact, Hausdorff,
and uniquely arcwise connected topological space augmenting $\bP^1$,  
and is also a tree in the sense of Jonsson \cite[Definition 2.2]{Jonsson15}.
A point $\cS\in\sP^1$ is of one and only one of types I, II, III, and IV. 
Any type I, II, or III point $\cS\in\sP^1\setminus\{\infty\}$
is represented by the multiplicative supremum seminorm 
$|\cdot|_{\{z\in K:|z-a|\le r\}}$ on $K[z]$
(or, by a $K$-closed disk
\begin{gather*}
B_{\cS}:=\{z\in K:|z-a|\le r\}
\end{gather*}
in $K$ itself),
and setting $\diam\cS:=r$, the point $\cS$ is of type I 
if and only if $\diam\cS=0$ (or equivalently $\cS\in K=\bP^1\setminus\{\infty\}$), 
and is of type II (resp.\ III) if and only if $\diam\cS\in|K^*|$ 
(resp.\ $\diam\cS\in\bR_{>0}\setminus|K^*|$). 
The point $\infty\in\bP^1$ is also of type I, and
the {\itshape canonical $($or Gauss$)$} point $\cS_{\can}$ in $\sP^1$
is of type II and is represented by $\cO_K$ (similarly,
type IV points are {\itshape represented} by a decreasing sequence $(B_n)$ of
$K$-closed disks in $K$ such that $\bigcap_nB_n=\emptyset$).
The tree structure on $\sP^1$ is induced by the
partial ordering of all those disks in $K$ by inclusions.
The topology of $\sP^1$ is nothing but the weak topology of $\sP^1$ as a tree.
For every $P\in K[z]$, the function $|P|$ on $K$ extends continuously 
to $\sP^1\setminus\{\infty\}$ so that 
\begin{gather*}
|P|(\cS)=\sup_{B_{\cS}}|P(\cdot)|\quad\text{on }\sP^1\setminus\{\infty\};
\end{gather*}
in particular $|P|(\cS_{\can})=\max\{|\text{coefficients of }P|\}$
by the strong triangle inequality. 

When $K$ is non-archimedean,
the chordal distance $[z,w]$ on $\bP^1(\times\bP^1)$
uniquely extends to a (jointly) upper semicontinuous and
separately continuous function $[\cS,\cS']_{\can}$
on $\sP^1\times\sP^1$,
which is called the {\itshape generalized Hsia kernel on $\sP^1$
with respect to $\cS_{\can}$} (for more details, see \cite[\S 3.4]{FRL}, \cite[\S 4.4]{BR10}); for every $z\in\bP^1$, $[\cS_{\can},z]_{\can}=1$, and
for any $\cS,\cS',\cS''\in\sP^1$, we still have the strong triangle inequality
\begin{gather*}
[\cS,\cS']_{\can}\le\max\{[\cS,\cS'']_{\can},[\cS',\cS'']_{\can}\}
\end{gather*}
(see e.g.\ \cite[Proposition 4.10(C)]{BR10}). When $K$ is archimedean,
it is convenient to define the kernel function $[z,w]_{\can}$ on $\bP^1(\times\bP^1)$by $[z,w]$ itself. 

For every $\cS\in\sP^1$, let
$\delta_{\cS}$ be the Dirac measure on $\sP^1$ at $\cS$.
Let us introduce the probability Radon measure
\begin{gather*}
 \Omega_{\can}:=\begin{cases}
		\delta_{\cS_{\can}}&\text{when }K\text{ is non-archimedean}\\
		\omega&\text{when }K\text{ is archimedean}
	       \end{cases}\quad\text{on }\sP^1,
\end{gather*}
where $\omega$ is the Fubini-Study area element on $\bP^1$ normalized as
$\omega(\bP^1)=1$ when $K$ is archimedean,
and for every $\cS\in\sP^1$.
The Laplacian $\Delta$ on $\sP^1$ is normalized so that for each $\cS'\in\sP^1$, 
\begin{gather*}
\Delta\log[\cdot,\cS']_{\can}=\delta_{\cS'}-\Omega_{\can}\quad\text{on }\sP^1
\end{gather*}
(for non-archimedean $K$, see \cite[\S5.4]{BR10}, \cite[\S2.4]{FR09}; 
in \cite{BR10} the opposite sign convention on $\Delta$ is adopted).

The action on $\bP^1$ of each $f\in K(z)$
extends to a canonical continuous action on $\sP^1$. 
If in addition $\deg f>0$, then
the canonical action on $\sP^1$ of $f$ is also surjective, open, and
finite. Moreover,
the local degree function $a\mapsto\deg_a f$ of $f$ on $\bP^1$ also canonically
extends to an upper semicontinuous
function $\sP^1\to\{1,2,\ldots,\deg f\}$ such that for every domain $V\subset\sP^1$
and every component $U$ of $f^{-1}(V)$, the function 
$\cS\mapsto\sum_{\cS'\in f^{-1}(\cS)\cap U}\deg_{\cS'}(f)$ is constant on $V$.
The pullback action $f^*$ induced by $f$ on the space of Radon measures on $\sP^1$
is defined for every $\cS\in\sP^1$ by
$f^*\delta_{\cS}=\sum_{\cS'\in f^{-1}(\cS)}\deg_{\cS'}(f)\cdot\delta_{\cS'}$
on $\sP^1$, and satisfies
the functoriality $f^*\circ\Delta=\Delta\circ f^*$ (see e.g.\ \cite[\S9]{BR10}). 

For every $f\in K(z)$ of degree $>1$,
the $f$-{\itshape equilibrium $($or canonical$)$ measure} 
$\mu_f$ on $\sP^1$ is defined by
such a unique probability Radon measure $\nu$ on $\sP^1$ that 
\begin{gather*}
f^*\nu=(\deg f)\cdot \nu\text{ on }\sP^1\quad\text{and}\quad \nu(E(f))=0,
\end{gather*}
where $E(f):=\{a\in\bP^1:\#\bigcup_{n\in\bN}f^{-n}(a)<\infty\}$
is at most countable and, if $K$ has characteristic $0$, then
$\#E(f)\le 2$. The (Berkovich) Julia set $\sJ(f)$ is defined by
the support of $\mu_f$; a point $\cS$ in $\sP^1$ is
in $\sJ(f)$ if and only if for any sequence $(n_j)$ in $\bN$
tending to $\infty$ as $j\to\infty$, we have
\begin{gather*}
\bigcap_{U:\text{ open in }\sP^1\text{ and contains }\cS}
\Biggl(\bigcup_{j\in\bN}f^{n_j}(U)\Biggr)=\sP^1\setminus E(f)
\end{gather*}
(see \cite[\S10]{BR10}, \cite[\S2]{ACL}, \cite[\S3.1]{FR09}
for more details). 
The (Berkovich) Fatou set $\sF(f)$ is the open
subset $\sP^1\setminus\sJ(f)$ in $\sP^1$, and each component of $\sF(f)$ is called
a (Berkovich) Fatou component of $f$.
If $W$ is a (Berkovich) Fatou component of $f$, then
so is $f(W)$, and 
we say $W$ is cyclic under $f$ if $f^N(W)=W$ for some $N\in\bN^*$. 
We also say that $W$ is an immediate Berkovich attracting basin of period $N$ for $f$ if
there is a $($super$)$attracting fixed point $a$ of $f^N$ in
$W\cap\bP^1$ with $\lim_{n\to\infty}(f^N)^n(\cS)=a$ for any $\cS\in W$.

\subsection{The dynamical Green function}
A {\itshape continuous weight $g$ on} $\sP^1$ is
a continuous function on $\sP^1$ such that
$\mu^g:=\Delta g+\delta_{\cS_{\can}}$ 
is a probability Radon measure on $\sP^1$.
For a continuous weight $g$ on $\sP^1$,
the $g$-{\itshape potential kernel} on $\sP^1$
(or the {\itshape negative of} an Arakelov Green kernel function on $\sP^1$
relative to $\mu^g$ \cite[\S 8.10]{BR10} or an $\Omega_{\can}$-quasipotential on $\sP^1$ of $\mu^g$) is defined by
\begin{gather*}
\Phi_g(\cS,\cS'):=\log[\cS,\cS']_{\can}-g(\cS)-g(\cS')
\end{gather*}
on $\sP^1\times\sP^1$, the {\itshape $g$-equilibrium energy}
$V_g$ {\itshape of $\sP^1$} (in fact $V_g\in\bR$)
is the supremum of the $g$-energy functional
$\nu\mapsto\int_{\sP^1\times\sP^1}\Phi_g\rd(\nu\times\nu)\in[-\infty,+\infty)$
over all probability Radon measures $\nu$ on $\sP^1$, and we call
a probability Radon measure $\nu$ on $\sP^1$ at which
the above $g$-energy functional attains the supremum $V_g$
a {\itshape $g$-equilibrium mass distribution on} $\sP^1$;
in fact, $\mu^g$ is the unique $g$-equilibrium mass distribution on $\sP^1$,
and 
\begin{gather*}
 \cS\mapsto\int_{\sP^1}\Phi_g(\cS,\cdot)\rd\mu^g\equiv V_g
\end{gather*}
on $\sP^1$
(for non-archimedean $K$, see \cite[Theorem 8.67, Proposition 8.70]{BR10}).
A {\itshape normalized weight $g$ on} $\sP^1$ 
is a continuous weight on $\sP^1$ satisfying $V_g=0$;
for every continuous weight $g$ on $\sP^1$, $\overline{g}:=g+V_g/2$ is
the {\itshape unique} normalized weight on $\sP^1$ 
such that $\mu^{\overline{g}}=\mu^g$ on $\sP^1$.

From now on, pick $f\in K(z)$ of degree $d>1$.
For every lift $F$ of $f$, the function 
\begin{gather*}
T_F:=\log\|F(\cdot)\|-d\cdot\log\|\cdot\|
\end{gather*}
on $K^2\setminus\{0\}$ descends to $\bP^1$ and in turn continuously extends 
to $\sP^1$, and the function $T_F/d$ is a continuous weight on $\sP^1$; indeed, 
$\Delta(T_F/d)+\Omega_{\can}=(f^*\delta_{\Omega_{\can}})/d$ 
is a probability Radon measure on $\sP^1$ 
(see, e.g., \cite[Definition 2.8]{Okucharacterization} for non-archimedean $K$).

For every $n\in\bN^*$, $F^n$ is a lift of $f^n$, and
by a telescoping sum argument (cf.\ \cite[\S 10]{BR10}, \cite[\S 6.1]{FR09}), 
the following limit exists
\begin{gather}
g_F:=\lim_{n\to\infty}\frac{T_{F^n}}{d^n}
=\lim_{n\to\infty}\sum_{j=0}^{n-1}\frac{T_F\circ f^j}{d^{j+1}}\quad\text{on }\sP^1.\label{eq:defGreen}
\end{gather}
More precisely, for every $n\in\bN^*$, we have the following uniform error estimate
\begin{gather}
\sup_{\sP^1}\left|g_F-\frac{T_{F^n}}{d^n}\right|\le\frac{\sup_{\sP^1}|T_F|}{d^n(d-1)},\label{eq:telescope}
\end{gather}
and for each $\alpha\in K\setminus\{0\}$, 
\begin{gather}
T_{\alpha F}=T_F+\log|\alpha|\quad\text{and}\quad
g_{\alpha F}=g_F+\frac{\log|\alpha|}{d-1}.\label{eq:scaling} 
\end{gather}
on $\sP^1$. The function $g_F$ 
is called the {\itshape dynamical Green function on $\sP^1$ of} 
$F$, and which is a continuous weight on $\sP^1$ satisfying $\mu^{g_F}=\mu_f$
on $\sP^1$; indeed, 
$\mu^{g_F}=\Delta g_F+\Omega_{\cS_{\can}}
=\lim_{n\to\infty}((f^n)^*\Omega_{\cS_{\can}}/d^n)$
weakly on $\sP^1$, and $\mu^{g_F}$ satisfies the defining two properties of $\mu_f$.
We note the following {\itshape pull-back formula}
\begin{gather}
g_F\circ f=d\cdot g_F-T_F\quad\text{on }\sP^1\label{eq:pullback}
\end{gather}
(\cite[Proof of Lemma 2.4]{okuyama:speed}) and
the following {\itshape energy} formula
\begin{gather*}
V_{g_F}=-\frac{\log|\Res F|}{d(d-1)}
\end{gather*}
(for a simple proof, see e.g. \cite[Appendix A]{Baker09} or \cite[Appendix]{OS11}.
For $K\cong\bC$, this is equivalent to 
DeMarco's capacity formula \cite{DeMarco2}).
More intrinsically,
the {\itshape dynamical Green function $g_f$ of $f$ on} $\sP^1$ is 
defined by the unique normalized weight on $\sP^1$ satisfying $\mu^{g_f}=\mu_f$
on $\sP^1$; for every lift $F$ of $f$, we have
\begin{gather}
g_f=g_F-\frac{\log|\Res F|}{d(2d-2)}\quad\text{on }\sP^1,\label{eq:differencegreen}
\end{gather}
so that 
\begin{gather*}
\Phi_{g_f}=\Phi_{g_F}+\frac{\log|\Res F|}{d(d-1)}\quad\text{on }\sP^1\times\sP^1.
\end{gather*}
Since $\Res F\in K\setminus\{0\}$ and by the homogeneity of $F$, up to multiplication in $\{z\in K:|z|=1\}$, there is 
a unique lift $F$ of $f$ such that 
$g_F=g_f$ on $\sP^1$, or equivalently, that $|\Res F|=1$.

\begin{caution}
Normalizing a lift $F$ of $f$ as
$|\Res F|=1$ is as natural as normalizing $F$ as $|F|=1$.
In this article, we are saying $F$ to be {\itshape minimal} 
(rather than ``normalized'') if it satisfies the latter condition $|F|$, 
and we would avoid to use the (more ambiguous)
terminology ``a normalized lift of $f$'' for both normalizations of $F$.
\end{caution}

We recall a standard fact from elimination theory.

\begin{fact}\label{factresultant}
 Let $R$ be an integral domain. Then for any two homogeneous polynomials
 $F_0(X,Y)=\sum_{j=0}^da_jX^{d-j}Y^j$,
 $F_1(X,Y)=\sum_{k=0}^db_kX^{d-k}Y^k$ in $R[X,Y]_d$,
 there exist $G_1,G_2,H_1,H_2\in R[X,Y]_{d-1}$ such that
 \begin{align*}
 F_1(X,Y)G_1(X,Y)+F_2(X,Y)G_2(X,Y)=&(\Res F)X^{2d-1}\quad\text{and}\\
 F_1(X,Y)H_1(X,Y)+F_2(X,Y)H_2(X,Y)=&(\Res F)Y^{2d-1}
 \quad\text{in }R[X,Y]
 \end{align*}
 and that all the $4d$ coefficients of $G_1,G_2,H_1,H_2\in R[X,Y]_{d-1}$
 (determined by $(a_0,\ldots,a_d,b_0,\ldots,b_d)$) 
 are in $\bZ[a_0,\ldots,a_d,b_0,\ldots,b_d]_{2d-1}$
 (see e.g.\ \cite[Proposition 2.13]{Silverman}). 
\end{fact}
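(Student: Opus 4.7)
The statement is a classical elimination-theoretic identity, so the plan is to reduce it to Cramer's rule applied to the Sylvester matrix of $F_0$ and $F_1$.

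First, I would fix the monomial basis $X^{2d-1}, X^{2d-2}Y, \ldots, Y^{2d-1}$ of $R[X,Y]_{2d-1}$ and the monomial basis $X^{d-1}, \ldots, Y^{d-1}$ of $R[X,Y]_{d-1}$. The $R$-linear map
\[
\mu : R[X,Y]_{d-1}\oplus R[X,Y]_{d-1}\longrightarrow R[X,Y]_{2d-1},\qquad (G,H)\longmapsto F_0\cdot G + F_1\cdot H,
\]
is represented in these bases by the $2d\times 2d$ Sylvester matrix $S(F_0,F_1)$ whose determinant is, by definition, $\Res F\in R$. In particular both sides are free $R$-modules of rank $2d$ and the classical adjugate identity $S(F_0,F_1)\cdot\operatorname{adj}(S(F_0,F_1)) = (\Res F)\cdot I_{2d}$ holds in $M_{2d}(R)$.

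Next, reading off the first and last columns of this matrix identity, I obtain preimages under $\mu$ of $(\Res F)X^{2d-1}$ and of $(\Res F)Y^{2d-1}$ respectively; calling these preimages $(G_1,G_2)$ and $(H_1,H_2)$, both lying in $R[X,Y]_{d-1}\oplus R[X,Y]_{d-1}$, gives exactly the two announced identities
\[
F_0 G_1 + F_1 G_2 = (\Res F)\,X^{2d-1},\qquad F_0 H_1 + F_1 H_2 = (\Res F)\,Y^{2d-1}.
\]

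Finally, to track the integral-polynomial nature and the bidegree in $(a_j,b_k)$, I would observe that every entry of $\operatorname{adj}(S(F_0,F_1))$ is a signed $(2d-1)\times(2d-1)$-minor of the Sylvester matrix; since each row of $S(F_0,F_1)$ is linear in the coefficients $a_0,\ldots,a_d,b_0,\ldots,b_d$ and has entries in $\bZ[a_j,b_k]$, each such minor lies in $\bZ[a_0,\ldots,a_d,b_0,\ldots,b_d]_{2d-1}$. The coefficients of $G_1,G_2,H_1,H_2$, being exactly the entries of the relevant columns of $\operatorname{adj}(S(F_0,F_1))$, therefore belong to $\bZ[a_0,\ldots,a_d,b_0,\ldots,b_d]_{2d-1}$, which concludes the proof. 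The only mildly non-routine point is keeping track of the indexing of the Sylvester matrix so that its columns really correspond to the monomial basis of $R[X,Y]_{d-1}\oplus R[X,Y]_{d-1}$ used to produce $(G_i)$ and $(H_i)$ of bidegree exactly $d-1$; once that bookkeeping is set up, the whole argument reduces to Cramer's rule.
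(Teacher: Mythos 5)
Your argument is correct: the adjugate identity for the Sylvester matrix of $(F_0,F_1)$ over the commutative ring $R$ yields exactly the two B\'ezout-type identities, and the homogeneity of degree $2d-1$ of the coefficients follows since each entry of the Sylvester matrix is one of the $a_j,b_k$ (or $0$) and each entry of the adjugate is a $(2d-1)\times(2d-1)$ minor. The paper offers no proof of its own (it states this as a Fact and cites Silverman, Proposition 2.13), and your Cramer's-rule argument is precisely the standard one underlying that reference.
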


The following algebraic estimates on $T_F$ and $g_F$ are crucial
for our purpose.

\begin{lemma}\label{lm:growthlift}
There exists a constant $A_1(d,K)\geq0$ depending only on $d$ and $K$ such that for every lift $F$ of any $f\in\Rat_d(K)$, on $\sP^1$,
\begin{eqnarray*}
 \log|\Res F|-(2d-1)\log|F|-A_1(d,K) \le & T_F & \leq\log|F|+A_1(d,K)\\
\log|\Res F|-(2d-1)\log|F|-A_1(d,K)\le & (d-1)g_F & \le \log|F|+A_1(d,K).
\end{eqnarray*}
Moreover, $A_1(d,K)=0$ when $K$ is non-archimedean.
\end{lemma}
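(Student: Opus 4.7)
The plan is to obtain the four inequalities in two independent steps: first establish bounds for $T_F$ on $\bP^1(K)\setminus\{\infty\}$ (they extend by continuity to $\sP^1$), and then deduce the bounds for $(d-1)g_F$ from the convergent series representation of $g_F$ given in~\eqref{eq:defGreen}.

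For the upper bound $T_F\le\log|F|+A_1(d,K)$, I would write $F=(F_0,F_1)$ with $F_i(p_0,p_1)=\sum_{j}c^{(i)}_jp_0^{d-j}p_1^j$ and estimate directly. When $K$ is non-archimedean the strong triangle inequality gives $|F_i(p)|\le\max_j|c^{(i)}_j|\|p\|^d\le|F|\cdot\|p\|^d$, hence $\|F(p)\|\le|F|\cdot\|p\|^d$ and $T_F\le\log|F|$; when $K\cong\bC$, the same estimate picks up a universal factor bounded by $\sqrt{2}(d+1)$, absorbed into $A_1(d,K)$. For the lower bound, I would use the elimination identity of Fact~\ref{factresultant}: there exist $G_1,G_2,H_1,H_2\in\bZ[a_0,\dots,b_d]_{2d-1}[X,Y]_{d-1}$ with
\begin{gather*}
F_0G_1+F_1G_2=(\Res F)X^{2d-1},\qquad F_0H_1+F_1H_2=(\Res F)Y^{2d-1}.
\end{gather*}
Evaluating at $p=(p_0,p_1)$ and estimating gives $|\Res F|\cdot|p_i|^{2d-1}\le\|F(p)\|\cdot M(p)$ where $M(p)$ is the max of the $|G_j(p)|$ and $|H_j(p)|$. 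Each of these polynomial values is a homogeneous polynomial in $(a_0,\dots,b_d)$ of degree $2d-1$ with integer coefficients, evaluated at a point of norm $\le\|p\|^{d-1}$, so in the non-archimedean case (where integers have absolute value $\le 1$ and strong triangle kills the combinatorial sum) one obtains $M(p)\le|F|^{2d-1}\|p\|^{d-1}$. Taking the maximum over $i=0,1$ yields $|\Res F|\cdot\|p\|^{2d-1}\le\|F(p)\|\cdot|F|^{2d-1}\|p\|^{d-1}$, i.e.\ $T_F\ge\log|\Res F|-(2d-1)\log|F|$ on $\bP^1$. In the archimedean case the same argument gives the identical inequality up to a combinatorial constant depending only on $d$, which I would absorb into $A_1(d,K)$.

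Once both bounds hold on all of $\sP^1$ (by upper/lower semicontinuity of $T_F$ and density of type I points, together with the explicit continuous extension of $T_F$), I would pass to $(d-1)g_F$. The series formula in~\eqref{eq:defGreen} writes $g_F=\sum_{j\ge 0}(T_F\circ f^j)/d^{j+1}$ as a uniform limit on $\sP^1$; since $f$ preserves $\sP^1$, the same pointwise bounds $L\le T_F\le U$ propagate term-by-term, giving $L/(d-1)\le g_F\le U/(d-1)$, equivalently $L\le(d-1)g_F\le U$, with $L=\log|\Res F|-(2d-1)\log|F|-A_1(d,K)$ and $U=\log|F|+A_1(d,K)$.

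The one genuinely delicate point is verifying that $A_1(d,K)=0$ in the non-archimedean case. The upper bound is immediate from the strong triangle inequality. The lower bound requires noticing two ultrametric cancellations: the integrality of the Sylvester-type coefficients of $G_i,H_i$ forces $|c|_v\le 1$ for every coefficient $c$, and the strong triangle inequality suppresses the number-of-monomials factor that would otherwise appear when bounding $|G_i(p)|$ and $|H_i(p)|$. Once these two facts are combined, everything collapses to a factor of $1$, and no additive constant survives. I expect this bookkeeping of constants — rather than any conceptual step — to be the main delicate point of the proof.
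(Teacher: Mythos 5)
Your proposal is correct and follows essentially the same route as the paper: the upper bound for $T_F$ via the (strong) triangle inequality, the lower bound via the elimination identities of Fact~\ref{factresultant} with the integrality/ultrametric cancellations yielding $A_1(d,K)=0$ in the non-archimedean case, and the bounds for $(d-1)g_F$ deduced from the series~\eqref{eq:defGreen} using $\sum_{j\ge0}d^{-(j+1)}=1/(d-1)$. The only difference is cosmetic bookkeeping of the archimedean constants, which are absorbed into $A_1(d,K)$ in both arguments.
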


\begin{proof}
Fix $f\in\Rat_d(K)$ and a lift $F=(F_0,F_1)$ of $f$,
and write as $F_0(X,Y)=\sum_{j=0}^da_jX^jY^{d-j}$ and
$F_1(X,Y)=\sum_{k=0}^db_kX^kY^{d-k}$ and let $G_1,G_2,H_1,H_2\in K[X_0,X_1]_{d-1}$ be  the homogeneous polynomials associated to $F_0,F_1$ as in Fact~\ref{factresultant}. Set
\begin{gather*}
 A:=\begin{cases}
      1 &\text{when }K\text{ is non-archimedean},\\
      \sqrt{2(d+1)} &\text{when }K\text{ is archimedean}.
     \end{cases}
\end{gather*}
Fix $p\in K^2$. By the (strong) triangle inequality, we have
\begin{gather*}
 \|F(p)\|\le A|F|\cdot\|p\|^d.
\end{gather*}
When $K$ is non-archimedean, using the strong triangle inequality and Fact~\ref{factresultant} gives
\begin{gather*}
 \max\{|G_1(p)|,|G_2(p)|,|H_1(p)|,|H_2(p)|\}\leq |F|^{2d-1}\|p\|^{d-1}.
\end{gather*}
When $K$ is archimedean, the triangle inequality and Fact~\ref{factresultant} give
\begin{gather*}
 \max\{|G_1(p)|,|G_2(p)|,|H_1(p)|,|H_2(p)|\}\leq A'|F|^{2d-1}\|p\|^{d-1}
\end{gather*}
where $A'$ depends only on $d$ is given by $2d$ times the maximum of the absolute value of the integer coefficients that appear in the expression of the coefficients of $G_1,G_2,H_1,H_2$ as polynomials in $a_0,\ldots,a_d,b_0,\ldots,b_d$.

Using the equalities in Fact~\ref{factresultant}, we get
\[
 |\Res F|\cdot\|p\|^{d}\leq |F|^{2d-1}\|F(p)\|
\]
if $K$ is non-archimedean and
\[
 |\Res F|\cdot\|p\|^{d}\leq A''|F|^{2d-1}\|F(p)\|
\]
if $K$ is archimedean, where $A''=2\sqrt{2} A'$ and the lemma follows by definition of $T_F$ and $g_F$.
\end{proof}

\subsection{The chordal derivative and a lemma \`a la Przytycki}\label{sec:Lyap}

Pick $f\in K(z)$ of degree $d>1$.
The {\itshape chordal derivative} of $f$ is
the function
\[z\mapsto f^\#(z):=\lim_{\bP^1\ni w\to z}\frac{[f(w),f(z)]}{[w,z]}\in\mathbb{R}_+\]
on $\bP^1(K)$, so that 
\begin{gather*}
f^\#(z)=|f'(z)|\quad\text{if }f(z)=z.
\end{gather*}
For every lift $F$ of $f$, by a computation involving Euler's identity, we have
\begin{gather}
f^\#(z)=\frac{1}{|d|}\frac{\|p\|^2}{\|F(p)\|^2}|\det DF(p)|,
\quad\text{where }p\in\pi^{-1}(z),\label{eq:derivative}
\end{gather}
on $\bP^1$ (cf.\ \cite[Theorem 4.3]{Jonsson98}). 
We can note that the map $f:\bP^1\to\bP^1$ is $M_1(f)$-Lipschitz continuous for the metric $[\cdot,\cdot]$, where
\begin{gather*}
 M_1(f):=\begin{cases}
	  1/|\Res(f)| & \text{when }K\text{ is non-archimedean (by Rumely-Winburn \cite{RumelyWinburn15})},\\
	  \sup_{\bP^1}(f^\#) & \text{when }K\text{ is archimedean},
	 \end{cases}
\end{gather*}
and let us set $M(f):=M_1(f)^2(\ge 1)$.

The following is a non-archimedean counterpart to \cite[Lemma~3.1]{distribGV} (see also~\cite[Lemma 2.2]{GOV}).

\begin{lemma}\label{lm:Przytycki}
Let $K$ be an algebraically closed field of characteristic $0$
that is complete with respect to a non-trivial and non-archimedean absolute value $|\cdot|$.
For any $f\in K(z)$ of degree $d>1$, any $c\in\mathrm{Crit}(f)$ and any $n\in\bN^*$, if $0<[f^n(c),c]<1/M_1(f)^{n+1}$, then
$[\sJ(f),c]_{\can}\ge 1/M_1(f)^{n+1}$. Moreover, 
$c$ lies in the Berkovich immediate basin of a
$($super$)$attracting fixed point $a\in\bP^1(K)$ of $f^n$ 
such that $[a,c]\le[f^n(c),c]$.
\end{lemma}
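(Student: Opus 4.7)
The plan is to adapt Przytycki's classical archimedean argument (cf.~\cite[Lemma~3.1]{distribGV}) to the non-archimedean setting. Setting $r := [f^n(c), c]$, so $0 < r < 1/M_1(f)^{n+1}$, I aim to show that the Berkovich open disk
\begin{gather*}
D := \{\cS \in \sP^1 : [\cS, c]_{\can} < 1/M_1(f)^{n+1}\}
\end{gather*}
is strictly mapped into itself by $f^n$. A non-archimedean Schwarz/Rivera-Letelier-type contraction principle (in the spirit of \cite[\S 10]{BR10}) then places $D$ in the immediate Berkovich attracting basin of a (super)attracting fixed point $a \in \bP^1(K)$ of $f^n$, from which both desired conclusions $[\sJ(f), c]_{\can} \ge 1/M_1(f)^{n+1}$ and $[a, c] \le r$ follow.

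The first step applies the iterated Rumely--Winburn Lipschitz bound for $f$ on $\bP^1$ (extended to $\sP^1$ via the generalized Hsia kernel), yielding $f^n(\bar D) \subset \{\cS : [\cS, f^n(c)]_{\can} \le 1/M_1(f)\}$. By the strong triangle inequality and $r < 1/M_1(f)$, this coincides with $\{\cS : [\cS, c]_{\can} \le 1/M_1(f)\}$, which is strictly larger than $D$; so this bound alone is insufficient. The crucial refinement exploits that $c$ is a critical point of $f^n$ with local degree $m := \deg_c f^n \ge \deg_c f \ge 2$: local analysis at $c$ (via Weierstrass preparation applied to a minimal lift $F^n$, or equivalently the Newton polygon of $F^n_0 - f^n(c) F^n_1$) gives a factorization of $f^n$ near $c$ as a degree-$m$ ramified cover composed with an isometry, thereby improving the effective chordal contraction rate of $f^n$ on $D$ by a power-of-$m$ factor and yielding $f^n(D) \subsetneq D$.

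Once $f^n(D) \subsetneq D$ is established, the orbit $c_k := f^{nk}(c)$ stays in $D$ with geometrically decaying consecutive distances, hence is Cauchy; completeness of $K$ forces convergence to some $a \in \bar D \cap \bP^1(K)$ with $f^n(a) = a$, and the established strict contraction makes $a$ (super)attracting. The strong triangle inequality gives $[a, c]_{\can} \le \max_{k \ge 0}[c_{k+1}, c_k]_{\can} \le [c_1, c_0]_{\can} = r$, and $D$ lying in the immediate Berkovich basin of $a$ is therefore disjoint from $\sJ(f)$, which gives $[\sJ(f), c]_{\can} \ge 1/M_1(f)^{n+1}$. The main obstacle is the local-contraction step: carrying out the non-archimedean analogue of Koebe/Schwarz distortion with quantitative constants compatible with $M_1(f) = 1/|\Res(f)|$, especially when comparing the chordal metric on $\bP^1$ to the affine metric near $c$ (or handling the case where $c$ or $f^n(c)$ is near $\infty$ in chordal distance).
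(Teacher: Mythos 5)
Your overall architecture matches the paper's: exploit the fact that $c$ is critical to upgrade the Lipschitz bound to a quadratic contraction of $f^n$ near $c$, deduce that a Berkovich disk around $c$ is forward-invariant (hence Fatou, giving the distance to $\sJ(f)$), and locate a (super)attracting fixed point of $f^n$ inside it. However, two steps you rely on are not actually established, and one of them would fail as stated. First, the key quadratic estimate is only gestured at: you propose Weierstrass preparation/Newton polygons for a lift of $f^n$ and then concede that getting constants compatible with $M_1(f)=1/|\Res(f)|$ is ``the main obstacle.'' The paper's route is short and avoids this entirely: conjugate by elements of $\PGL(2,\cO_K)$ (chordal isometries) so that $c$ and $f(c)$ both go to $0$, set $g=A\circ f\circ B$ with $g(0)=g'(0)=0$, and apply the non-archimedean Schwarz lemma to $h(z)=g(z)/z$ to get $[f(z),f(c)]\le M_1(f)^2[z,c]^2$ whenever $[z,c]<1/M_1(f)$; the remaining $n-1$ iterates are handled by the plain Lipschitz bound, yielding $[f^n(\cS),f^n(c)]_{\can}\le M_1(f)^{n+1}[\cS,c]_{\can}^2$. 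Analyzing $f^n$ directly, as you propose, would naturally produce constants in terms of $M_1(f^n)$ rather than $M_1(f)^{n+1}$, which is exactly the bookkeeping you would then have to repair.

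Second, your derivation of the fixed point is broken: you claim the orbit $c_k=f^{nk}(c)$ has ``geometrically decaying consecutive distances,'' but the quadratic estimate only applies when one of the two compared points is $c$ itself. For $k\ge 2$ the only available bound on $[c_{k+1},c_k]$ is the Lipschitz one with constant $M_1(f)^n\ge 1$, which gives no decay; forward-invariance of the disk alone does not make the orbit Cauchy. The paper instead applies the non-archimedean Schwarz lemma/fixed-point theorem (after conjugating $c$ to $0$) to the analytic self-map $C\circ f^n\circ C^{-1}$ of the closed disk of radius $[f^n(c),c]$: this produces a fixed point $b$ in that disk with multiplier of absolute value $\le 1$, and strict inequality (hence attraction) follows because the disk contains a critical point of $f^n$. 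That ingredient, or an equivalent substitute, is indispensable and missing from your argument. Your route to the first conclusion (once the quadratic estimate is granted, $f^n$ maps the open disk of radius $1/M_1(f)^{n+1}$ into itself, so it lies in the Fatou set) is fine and in fact slightly more direct than the paper's contradiction argument.
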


\begin{proof}
Fix $f\in K(z)$ of degree $>1$ and $c\in \mathrm{Crit}(f)$.
We claim that for every $z\in\bP^1$, if $[z,c]<1/M_1(f)$, then
\begin{gather}
[f(z),f(c)]\le M_1(f)^2\cdot[z,c]^2;\label{eq:classical}
\end{gather}
indeed, fixing such $A,B\in\PGL(2,\cO_K)$ that $A(f(c))=B^{-1}(c)=0$ and
setting $g:=A\circ f\circ B$, we have $g(0)=g'(0)=0$,
so that $h(z):=g(z)/z$ extends analytically to $z=0$ and that $h(0)=0$, 
and $M_1(f)=M_1(g)$. If $|z|<1/M_1(g)(\le 1)$, then we have
\begin{gather*}
|g(z)|=[g(z),0]=[g(z),g(0)]\le M_1(g)\cdot[z,0]=M_1(g)|z|<1,
\end{gather*}
and in particular $|h(z)|\le M_1(g)$.
Hence by the (non-archimedean) Schwarz lemma (see \cite[Corollaire 1.4 $\&$ Lemme de Schwarz]{Juan03}),
if $0<|z|<1/M_1(g)$, then
\begin{gather*}
\left|\frac{g(z)}{z}\right|=|h(z)-h(0)|
\le\frac{\sup_{|w|<1/M_1(g)}|h(w)|}{1/M_1(g)}|z-0|\le M_1(g)^2|z|,
\end{gather*} 
that is, if $[z,0](=|z|)<1/M_1(g)$, then
$[g(z),0]=|g(z)|\le M_1(g)^2|z|^2=M_1(g)^2[z,0]^2$. Hence the claim holds.

Then by \eqref{eq:classical}, the separate continuity of $[\cS,\cS']_{\can}$
on $\sP^1\times\sP^1$, the density of $\bP^1$ in $\sP^1$, 
and the continuity of the action on $\sP^1$ of $f$,
for every $\cS\in\sP^1$, if $[\cS,c]_{\can}<1/M_1(f)$, then we still have
$[f(\cS),f(c)]_{\can}\le M_1(f)^2\cdot[\cS,c]_{\can}^2$.
Similarly, for every $\cS\in\sP^1$ and every $n\in\bN$, we have
$[f^n(\cS),f^n(c)]_{\can}\le M_1(f)^{n-1}\cdot[f(\cS),f(c)]_{\can}$.

Fix $n\in\bN$ so that $0<[f^n(c),c]<1/M_1(f)^{n+1}$.
We claim that $[\sJ(f),c]_{\can}\ge 1/M_1(f)^{n+1}$; otherwise,
for every $\cS\in\sP^1$, if
$[\cS,c]_{\can}<\epsilon_0:=\max\{[f^n(c),c],[\sJ(f),c]_{\can}\}
(\in(0,1/M_1(f)^{n+1}))$, then by \eqref{eq:classical}, we have
\begin{gather*}
[f^n(\cS),f^n(c)]_{\can}
\le M_1(f)^{n-1}\cdot[f(\cS),f(c)]_{\can}
\le M_1(f)^{n+1}\cdot[\cS,c]_{\can}^2
\le[\cS,c]_{\can}<\epsilon_0,
\end{gather*}
so that 
$[f^n(\cS),c]_{\can}\le\max\{[f^n(\cS),f^n(c)]_{\can},[f^n(c),c]\}<\epsilon_0$.
Hence setting 
$B(c,\epsilon_0):=\{\cS\in\sP^1:[\cS,c]_{\can}<\epsilon_0(<1)\}$,
which is a domain in $\sP^1$ satisfying
$\#(\sP^1\setminus B(c,\epsilon_0))>2$,
we must have $f^{n\cdot j}(B(c,\epsilon_0))\subset B(c,\epsilon_0)$
for every $j\in\bN$, so that $B(c,\epsilon_0)\subset\sP^1\setminus\sJ(f)$.
However, by the choice of $\epsilon_0$, we have
$B(c,\epsilon_0)\cap\sJ(f)\neq\emptyset$,
which is a contradiction. Hence the claim holds.

Replacing the above $\epsilon_0$ with $\epsilon_1:=[f^n(c),c]$, setting $B(c,\epsilon_1):=\{\cS\in\sP^1:[\cS,c]_{\can}<\epsilon_1(<1)\}$,
we still have 
$f^n(B(c,\epsilon_1))\subset B(c,\epsilon_1)\subset\sF(f)$.
Let $W$ be the Berkovich Fatou component of $f$ containing
$B(c,\epsilon_1)$, which is cyclic since we have $f^n(W)=W$. 
Fix such $C\in\PGL(2,\cO_K)$ that $C(c)=0$. Then
$\{z\in K:|z|\le\epsilon_1\}=C(B(c,\epsilon_1)\cap\bP^1)
\supset(C\circ f^n\circ C^{-1})(\{z\in K:|z|\le\epsilon_1\})$. 
Hence by the (non-archimedean) Schwarz lemma 
(see \cite[Corollaire 1.4]{Juan03}),
$C\circ f^n\circ C^{-1}$ has a fixed point $b$ in 
$\{z\in K:|z|\le\epsilon_1\}=\{z\in K:|z-b|\le\epsilon_1\}$,
so that
$|(C\circ f^n\circ C^{-1})'(b)|\le 1$, and 
indeed $|(C\circ f^n\circ C^{-1})'(b)|< 1$
since $C^{-1}(0)=c\in\operatorname{Crit}(f^n)$.

Hence
$a:=C^{-1}(b)\in B(c,\epsilon_1)\cap\bP^1\subset W\cap\bP^1$
is a (super) attracting fixed point of $f^n$ and
$[a,c]=[C(a),C(c)]=[b,0]=|b|\le\epsilon_1=[f^n(c),c]$.
\end{proof}

By an argument similar to the above shows a more effective version
of \cite[Lemma~3.1]{distribGV}, which would be needed below.

\begin{lemma}\label{th:Parchi}
Let $K$ be an algebraically closed field of characteristic $0$
that is complete with respect to an archimedean absolute value.
For every $f\in K(z)$ of degree $>1$, every $c\in \mathrm{Crit}(f)$,
and every $n\in\bN^*$, 
if $0<[f^n(c),c]<1/(64M_1(f)^{n+1})$, then
$[J(f),c]\ge 1/(32M_1(f)^{n+1})$ and, moreover, 
$c$ lies in the immediate basin of a
$($super$)$attracting fixed point $a\in\bP^1(K)$ of $f^n$ 
such that $[a,c]\le 2[f^n(c),c]$.
\end{lemma}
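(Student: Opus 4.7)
The plan is to follow the template of the non-archimedean Lemma~\ref{lm:Przytycki}, replacing the non-archimedean Schwarz lemma with its classical archimedean counterpart and carefully tracking the constants that arise from the discrepancy between the chordal (Fubini--Study) metric and the Euclidean metric. The key analytic input is an archimedean quadratic contraction estimate of the form
\[
 [f(z),f(c)]\le C_0\, M_1(f)^2\, [z,c]^2,
\]
valid for every $z\in\bP^1$ with $[z,c]<\rho_0/M_1(f)$, where $C_0$ and $\rho_0$ are explicit absolute constants.

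To establish this estimate, I would choose $A,B\in\PSU(2,K)$ such that $A(f(c))=B^{-1}(c)=0$. Since elements of $\PSU(2,K)$ are chordal isometries, $g:=A\circ f\circ B$ satisfies $M_1(g)=M_1(f)$, and one checks that $g(0)=g'(0)=0$, the second equality reflecting that $c$ is a critical point of $f$ while $A,B$ are biholomorphisms. From $[g(z),0]\le M_1(g)\cdot[z,0]$ together with the identity $[w,0]=|w|/\sqrt{1+|w|^2}$, one controls $|g(z)|$ uniformly on a Euclidean disc of radius comparable to $1/M_1(f)$. Writing $g(z)=z^2\tilde h(z)$ with $\tilde h$ holomorphic near $0$, the maximum modulus principle applied to $\tilde h$ yields $|\tilde h(z)|\le C_0 M_1(f)^2$ on a slightly smaller disc, and transferring back through the isometries $A,B$ gives the displayed quadratic estimate in the chordal metric centred at $c$ and at $f(c)$.

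With this estimate in hand, the remaining assertions follow by imitating the non-archimedean proof. Combining the quadratic estimate with the global Lipschitz bound $[f^k(z),f^k(w)]\le M_1(f)^k[z,w]$ yields
\[
 [f^n(z),f^n(c)]\le C_0\, M_1(f)^{n+1}\, [z,c]^2
\]
for $[z,c]$ sufficiently small. Assuming for contradiction that $[J(f),c]<1/(32M_1(f)^{n+1})$ while $[f^n(c),c]<1/(64M_1(f)^{n+1})$, one sets $\epsilon_0:=\max\{[f^n(c),c],[J(f),c]\}$; the iterated estimate together with the triangle inequality shows that $f^n(B(c,\epsilon_0))\subset B(c,\epsilon_0)$, so Montel's theorem forces $B(c,\epsilon_0)\subset\sF(f)$, contradicting $B(c,\epsilon_0)\cap J(f)\neq\emptyset$. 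This establishes the first conclusion. The same argument with $\epsilon_1:=[f^n(c),c]$ gives $f^n(B(c,\epsilon_1))\subset B(c,\epsilon_1)$, and a standard fixed-point argument on this invariant disc (Banach contraction via the quadratic estimate, using that $c$ is critical for $f^n$) locates a (super)attracting fixed point $a\in\bP^1(K)$ of $f^n$ satisfying $[a,c]\le 2[f^n(c),c]$.

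The main obstacle is calibrating the absolute constants $C_0$ and $\rho_0$ in the quadratic contraction estimate so that the resulting thresholds come out to precisely $1/(64M_1(f)^{n+1})$ and $1/(32M_1(f)^{n+1})$, and so that the fixed point lands within $2[f^n(c),c]$ rather than some larger multiple. The factors $64$ and $32$ ultimately reflect the optimal choice of radius for the application of the Schwarz lemma combined with the $\sqrt{1+|w|^2}$-distortion between the Euclidean and chordal metrics; tracking these carefully, while tedious, does not present a structural difficulty.
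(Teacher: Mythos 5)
Your proposal is correct and follows exactly the route the paper intends: the paper gives no written proof of this lemma, stating only that it follows ``by an argument similar to the above'' (i.e.\ to the non-archimedean Lemma~\ref{lm:Przytycki}), and your adaptation --- classical Schwarz lemma and Montel's theorem in place of their non-archimedean counterparts, with the factors $64$, $32$ and $2$ absorbing the failure of the strong triangle inequality and the chordal/Euclidean distortion --- is precisely that argument. The one point to watch is that with the ordinary triangle inequality the invariant disc must be taken of radius about $2[f^n(c),c]$ rather than $[f^n(c),c]$ (your displayed inclusion $f^n(B(c,\epsilon_1))\subset B(c,\epsilon_1)$ is not literally right as stated), which is exactly why the conclusion reads $[a,c]\le 2[f^n(c),c]$; since you explicitly flag this calibration, it is bookkeeping rather than a gap.
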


The following algebraic upper estimate on $f^\#$ is a keystone in the sequel.

\begin{lemma}\label{th:upperlyap}
Let $K$ be an algebraically closed field of characteristic $0$ that is complete with respect to a non-tivial absolute value $|\cdot|$. There exists a constant $A_2(d,K)\geq0$ depending only on $d$ and $K$ such that for every $f\in\Rat_d(K)$ and every lift $F$ of $f$, 
\begin{align*}
 \log(f^\#)
\le-\log|d|-2\log|\Res F|+4d\log|F|+A_2(d,K) \quad\text{on }\sP^1.
\end{align*}
Moreover, $A_2(d,K)=0$ when $K$ is non-archimedean. 
\end{lemma}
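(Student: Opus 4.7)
The plan is to work directly from the explicit formula \eqref{eq:derivative}: for any lift $F=(F_0,F_1)$ of $f$ and any $p\in\pi^{-1}(z)\subset K^2\setminus\{0\}$,
\[
f^\#(z)=\frac{1}{|d|}\cdot\frac{\|p\|^2}{\|F(p)\|^2}\cdot|\det DF(p)|,
\]
so it suffices to bound $\|F(p)\|$ from below and $|\det DF(p)|$ from above, both in terms of $\|p\|$, $|F|$ and $|\Res F|$. The lower bound on $\|F(p)\|$ is exactly the lower estimate provided by Lemma \ref{lm:growthlift} for $T_F=\log\|F(\cdot)\|-d\log\|\cdot\|$; in multiplicative form it reads
\[
\|F(p)\|\ge e^{-A_1(d,K)}\cdot|\Res F|\cdot|F|^{-(2d-1)}\cdot\|p\|^d.
\]

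For the upper bound on the determinant, observe that each entry $\partial F_i/\partial X_j$ of the Jacobian matrix $DF$ is a homogeneous polynomial of degree $d-1$ in $(X_0,X_1)$ whose coefficients are integer multiples of the coefficients of $F$. Applying the (strong) triangle inequality entry-by-entry and then to the $2\times 2$ determinant, one gets
\[
|\det DF(p)|\le C(d,K)\cdot|F|^2\cdot\|p\|^{2(d-1)},
\]
where $C(d,K)$ is a purely combinatorial constant. In the non-archimedean case, $|n|\le 1$ for every $n\in\bZ$ and the strong triangle inequality removes all additive contributions, so one may take $C(d,K)=1$; in the archimedean case, $C(d,K)$ depends only on $d$.

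Combining the two estimates, the powers of $\|p\|$ cancel since $2+2(d-1)-2d=0$, yielding
\[
f^\#(z)\le\frac{C(d,K)\cdot e^{2A_1(d,K)}}{|d|}\cdot\frac{|F|^{4d}}{|\Res F|^2}
\]
pointwise on $\bP^1$. Taking logarithms and setting $A_2(d,K):=\log C(d,K)+2A_1(d,K)$ gives the desired inequality on $\bP^1$, with $A_2(d,K)=0$ when $K$ is non-archimedean thanks to the corresponding vanishing of $A_1(d,K)$ in Lemma \ref{lm:growthlift}. Finally, since the right-hand side is a constant independent of $z$ and since $f^\#$ extends continuously from $\bP^1$ to the Berkovich line $\sP^1$, with $\bP^1$ dense in $\sP^1$, the same inequality holds on all of $\sP^1$.

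The only delicate point I foresee is the archimedean bookkeeping for $C(d,K)$, i.e.\ keeping track of the combinatorial factors arising from differentiation and the triangle inequality; this is routine but tedious. In the non-archimedean setting, which is the one that matters for Theorem \ref{tm:approx}, the strong triangle inequality collapses all such constants and the argument is essentially a two-line consequence of Lemma \ref{lm:growthlift} and \eqref{eq:derivative}.
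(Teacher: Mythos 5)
Your proposal is correct and follows essentially the same route as the paper: both start from the formula \eqref{eq:derivative}, bound $|\det DF(p)|$ by $C|F|^2$ times the appropriate power of $\|p\|$ via the (strong) triangle inequality, invoke the lower bound on $T_F$ (equivalently on $\|F(p)\|$) from Lemma~\ref{lm:growthlift}, and extend from $\bP^1$ to $\sP^1$ by continuity and density. The only cosmetic difference is that you keep the homogeneity factor $\|p\|^{2(d-1)}$ explicit and observe the cancellation of exponents, whereas the paper simply works on the unit ball $\{\|p\|\le 1\}$.
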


\begin{proof}
Fix $f\in\Rat_d(K)$ and a lift $F=(F_0,F_1)$ of $f$. 
Then $\det DF(X,Y)\in K[X,Y]_{2(d-1)}$ 
and, writing $F_0=\sum_{j=0}^da_jX^jY^{d-j}$ and
$F_1=\sum_{k=0}^db_kX^kY^{d-k}$, all the coefficients of 
$\det F$ are in $\bZ[a_0,\ldots,a_d,b_0,\ldots,b_d]_2$. 
By the (strong) triangle inequality,
we have
$|\det DF(p)|\leq C|F|^2$ on $\{p\in K^2:\|p\|\le 1\}$,
where we set $C:=1$ when $K$ is non-archimedean or
$C:=2d^2(d+1)^2$ when $K$ is archimedean. Combined with \eqref{eq:derivative}, this yields
$\log(f^\#)\circ\pi(p)\le-\log|d|-2T_F\circ\pi+2\log|F|+\log C$
on $\{p\in K^2:\|p\|\le 1\}$, so that also by Lemma~\ref{lm:growthlift},
we have
\[\log(f^\#)\le-\log|d|-2T_F+2\log|F|+\log C
\le-\log|d|-2\log|\Res F|+4d\log|F|+A_2(d,K)\]
on $\bP^1$, and in turn on $\sP^1$
by the continuity of the chordal derivative $f^\#$ 
on $\bP^1$ and the density of $\bP^1$ in $\sP^1$. 
\end{proof}

\subsection{The Lyapunov exponent}

From now on, let us also assume that $K$ has characteristic $0$.
Pick $f\in K(z)$ of degree $d>1$ and choose $C_1^F,\ldots,C_{2d-2}^F\in K^2\setminus\{0\}$
so that the Jacobian determinant $\det DF\in K[p_0,p_1]$ of a lift $F$ of $f$ 
factors as 
\begin{gather*}
 \det DF(p_0,p_1)=\prod_{j=1}^{2d-2}((p_0,p_1)\wedge C_j^F)
\end{gather*}
in $K[p_0,p_1]$ 
and setting $c_j:=\pi(C_j^F)$ for each $j\in\{1,\ldots,2d-2\}$(so that $\{c_j:j\in\{1,\ldots,2d-2\}\}=\mathrm{Crit}(f)$), by \eqref{eq:derivative},
the function 
$f^\#$ on $\bP^1$ extends continuously to $\sP^1$ so that
\begin{align*}
\log(f^\#)=&-\log|d|+\sum_{j=1}^{2d-2}(\log[\cdot,c_j]_{\can}+\log\|C_j^F\|)-2T_F\\
=&-\log|d|+\sum_{j=1}^{2d-2}(\Phi_{g_F}(\cdot,c_j)+g_F(c_j)+\log\|C_j^F\|)
+2g_F\circ f-2g_F\quad\text{on }\sP^1.
\end{align*}

\begin{definition}[\cite{ FR09}]
The {\itshape Lyapunov exponent} 
of $f$ with respect to $\mu_f$ is
\begin{align*}
L(f):=\int_{\sP^1}\log(f^\#)\rd\mu_f\in\mathbb{R}.
\end{align*}
\end{definition}

Note that $L(f)$ can be written as
\begin{align}
L(f)=-\log|d|+\sum_{j=1}^{2d-2}(g_F(c_j)+\log\|C_j^F\|),\label{eq:Lyapdef}
\end{align}
where $F$ is a lift of $f$ satisfying $|\Res F|=1$, so that
\begin{gather}
\log(f^\#)=L(f)+\sum_{c\in \mathrm{Crit}(f)}\Phi_{g_f}(\cdot,c)
+2g_f\circ f-2g_f\quad\text{on }\sP^1,
\label{eq:formula}
\end{gather}
and where the sum over $\mathrm{Crit}(f)$ takes into account the multiplicity 
of each $c\in \mathrm{Crit}(f)$
(\cite[Lemma 3.6]{Okuyama:FeketeMZ} or \cite[Lemma 2.4]{okuyama:speed}).
Note that $L(f)$ needs not be positive in full generality.

The following lower estimate
is crucial for our purpose.

\begin{proposition}\label{tm:nonarchLyap}
Let $K$ be an algebraically closed field of characteristic $0$
that is complete with respect to a non-trivial absolute value $|\cdot|$, and fix an integer $d>1$. 
Then for every $f\in\Rat_d(K)$ and every lift $F$ of $f$, 
we have
\begin{align*}
L(f)\geq-\log|d|
+\sum_{c\in \mathrm{Crit}(f)}g_F(c)-\frac{2}{d}\log|\Res F|+\log|C_F|,
\end{align*}
where $C_F\in K\setminus\{0\}$ is the 
leading coefficient of $\det DF(1,z)\in K[z]$.
\end{proposition}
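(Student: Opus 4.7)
The plan is to derive the proposition from the identity
\[
L(f)=-\log|d|+\sum_{j=1}^{2d-2}(g_{F}(c_{j})+\log\|C_{j}^{F}\|),
\]
which the paper has just established in \eqref{eq:Lyapdef} but \emph{only} for the distinguished lift satisfying $|\Res F|=1$. So given an arbitrary lift $F$, I would rescale: pick $\beta\in K^{*}$ with $|\beta|=|\Res F|^{1/(2d)}$ (such a $\beta$ exists since the value group of an algebraically closed non-trivially valued field is divisible, and in the archimedean case $K\cong\bC$ trivially), and set $F_{0}:=\beta^{-1}F$. Then $\Res(F_{0})=\beta^{-2d}\Res(F)$ has absolute value $1$, so the above identity applies to $F_{0}$ and gives
\[
L(f)=-\log|d|+\sum_{j=1}^{2d-2}\bigl(g_{F_{0}}(c_{j})+\log\|C_{j}^{F_{0}}\|\bigr).
\]

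Next I would translate each of the two sums back to $F$ via the scaling rules. For the dynamical Green function, identity \eqref{eq:scaling} gives $g_{F_{0}}=g_{F}-\log|\beta|/(d-1)$, so summing over the $2d-2$ critical points produces $\sum_{j}g_{F_{0}}(c_{j})=\sum_{j}g_{F}(c_{j})-2\log|\beta|$. For the critical vectors, the relation $\det D(\beta^{-1}F)=\beta^{-2}\det DF$ means that the factorization $\det DF_{0}=\prod((p_{0},p_{1})\wedge C_{j}^{F_{0}})$ can be chosen so that, say, $C_{1}^{F_{0}}=\beta^{-2}C_{1}^{F}$ and $C_{j}^{F_{0}}=C_{j}^{F}$ for $j\ge2$; since $\sum_{j}\log\|C_{j}^{F}\|$ is intrinsic to the chosen factorization modulo the constraint $\prod\lambda_{j}=1$, we obtain $\sum_{j}\log\|C_{j}^{F_{0}}\|=\sum_{j}\log\|C_{j}^{F}\|-2\log|\beta|$. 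Plugging both in and using $-4\log|\beta|=-(2/d)\log|\Res F|$ yields the \emph{equality}
\[
L(f)=-\log|d|+\sum_{c\in\mathrm{Crit}(f)}g_{F}(c)-\frac{2}{d}\log|\Res F|+\sum_{j=1}^{2d-2}\log\|C_{j}^{F}\|.
\]

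The last step is the elementary comparison $\sum_{j}\log\|C_{j}^{F}\|\ge\log|C_{F}|$. Writing $C_{j}^{F}=(C_{j,0}^{F},C_{j,1}^{F})$, the wedge factorization $\det DF(p_{0},p_{1})=\prod_{j}(C_{j,1}^{F}p_{0}-C_{j,0}^{F}p_{1})$ gives $\det DF(1,z)=\prod_{j}(C_{j,1}^{F}-zC_{j,0}^{F})$, so the leading coefficient in $z$ (an even degree $2d-2$ polynomial) is $C_{F}=\prod_{j}C_{j,0}^{F}$; therefore $|C_{F}|=\prod_{j}|C_{j,0}^{F}|$, and the claim follows at once from the trivial bound $|C_{j,0}^{F}|\le\|C_{j}^{F}\|$ valid in both the non-archimedean and archimedean settings. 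Substituting this bound into the equality above yields the proposition. There is no real obstacle here beyond careful bookkeeping of the scaling; the only place strictness is lost is in the last inequality, which becomes tight when no critical point lies at infinity.
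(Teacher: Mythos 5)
Your derivation of the identity
\[
L(f)=-\log|d|+\sum_{c\in\mathrm{Crit}(f)}g_{F}(c)-\frac{2}{d}\log|\Res F|+\sum_{j=1}^{2d-2}\log\|C_{j}^{F}\|
\]
for an arbitrary lift $F$ is correct and is exactly the paper's first step (the rescaling by $\beta$ with $|\beta|^{2d}=|\Res F|$ is just an explicit way of invoking the homogeneity of $g_F$ and of $\det DF$), so up to that point you and the paper coincide.

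The gap is in the final step. You identify $C_F$ with $\prod_{j}C_{j,0}^{F}$, the coefficient of $z^{2d-2}$ in $\det DF(1,z)=\prod_{j}(C_{j,1}^{F}-zC_{j,0}^{F})$. This is only valid when every $C_{j,0}^{F}\neq 0$, i.e.\ when no critical point sits at the point at infinity of the chosen chart. If some $c_j$ is at infinity (this happens for every polynomial $f$, for which $\infty$ is a critical point of multiplicity $d-1$), then $\deg_z\det DF(1,z)<2d-2$ and the \emph{leading} coefficient $C_F$ of the statement --- which must be nonzero, as the proposition asserts and as the proof of Theorem~B uses --- is $\pm\prod_{j:c_j\neq\infty}C_{j,0}^{F}\cdot\prod_{j:c_j=\infty}C_{j,1}^{F}$, not $\prod_j C_{j,0}^F=0$. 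Your inequality then only bounds $\sum_j\log\|C_j^F\|$ below by $-\infty$ and does not yield the claim; nor can one recover it by continuity, since $\log|C_F|$ jumps as a critical point degenerates to infinity. Your closing remark about tightness ``when no critical point lies at infinity'' shows you sensed the issue, but the problem is validity, not strictness. The fix is the paper's one-line case split: for $j$ with $c_j\neq\infty$ bound $\|C_j^F\|\ge|C_j^F\wedge(0,1)|=|C_{j,0}^F|$, and for $j$ with $c_j=\infty$ bound $\|C_j^F\|\ge|C_j^F\wedge(1,0)|=|C_{j,1}^F|$; summing these and using the displayed product formula for $C_F$ gives $\sum_j\log\|C_j^F\|\ge\log|C_F|$ in all cases.
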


\begin{proof}
Fix a lift $F$ of $f$ and pick $C_1^F,\ldots,C_{2d-2}^F$ as above.
Then by \eqref{eq:Lyapdef} and by the homogeneity property \eqref{eq:differencegreen} of $g_F$,
we have
\begin{align*}
L(f)=&-\log|d|+\sum_{j=1}^{2d-2}(g_F(c_j)+\log\|C_j^F\|)-\frac{2}{d}\log|\Res F|\\
\ge&-\log|d|+\sum_{c\in\Crit(f)}g_F(c_j)+\sum_{j:c_j\neq\infty}\log|C_j^F\wedge(0,1)|+\sum_{j:c_j=\infty}\log|C_j^F\wedge(1,0)|\\
&-\frac{2}{d}\log|\Res F|,
\end{align*}
where we used that $\|(0,1)\|=\|(1,0)\|=1$. The fact that
\begin{gather*}
\log|C_F|=\sum_{j:c_j\neq\infty}\log|C_j^F\wedge(0,1)|+\sum_{j:c_j=\infty}\log|C_j^F\wedge(1,0)|
\end{gather*}
completes the proof.
\end{proof}

\section{Locally uniform approximation of the Lyapunov exponent}\label{sec:approx}

In the whole section, $K$ is an algebraically closed field of characteristic $0$ that is complete with respect to a non-trivial absolute value $|\cdot|$.
Fix an integer $d>1$. For every $n\in\bN^*$ and every $r>0$, define the function
\begin{gather}
f\mapsto L_n(f,r):=\frac{1}{nd_n}\sum_{z\in\Fix^{*}(f^n)}\log\max\{r,|(f^n)(z)|\}\label{eq:truncate}
\end{gather}
on $\Rat_d(K)$, where the sum over $\Fix^{*}(f^n)$ is taken wit multiplicity.

When $K$ is archimedean, i.e., $K\cong\bC$,
the function $L_n(\cdot,r)$ is nothing but the
function 
\[f\mapsto \frac1{d_n}\int_0^{2\pi}\log|p_{d,n}(f,re^{i\theta})|\frac{\rd\theta}{2\pi}\]
on $\Rat_d(K)$.
When $K$ is non-archimedean, for every $f\in\Rat_d(K)$ and every $n\in\bN^*$,
the function $w\mapsto|p_{d,n}(f,w)|$ on $K$ extends continuously to $\sP^1\setminus\{\infty\}$. Denote by $|p_{d,n}(f,\cdot)|$ this extension.
Moreover, for every $r>0$, let 
$\cS_{0,r}$ be the point in $\sP^1$ such that $B_{\cS_{0,r}}=\{z\in K:|z|\le r\}$.
By Gauss lemma, when $K$ is non-archimedean, 
for every $f\in\Rat_d(K)$, every $n\in\bN^*$, and every $r>0$, we have
\begin{gather}
L_n(f,r)=\frac{1}{d_n}\int_{\sP^1}\log|p_{d,n}(f,\cdot)|\rd\delta_{\cS_{0,r}}
=\frac{1}{n d_n}\log\max_{0\leq j\leq d_n}|\sigma_{j,n}^*(f)|\cdot r^{d_n-j}.\label{eq:heighttruncate}
\end{gather}

\subsection{The non-archimedean locally uniform approximation formula}

The following is a more precise version of Theorem~\ref{tm:approx}.

\begin{theorem}\label{tm:approx1}
Let $K$ be an algebraically closed field of characteristic $0$
that is complete with respect to a non-trivial
and non-archimedean absolute value, and fix an integer $d>1$.
Then for every $f\in\Rat_d(K)$, every $n\in\bN^*$, 
and every $r\in(0,\epsilon_{d^n}]$,
\begin{align*}
\bigl|L_n(f,r)-L(f)\bigr|
\le 2(2d-2)^2\cdot\left(|L(f)|+\log M(f)+\sup_{\sP^1}|g_f|+|\log r|\right)\frac{\sigma_2(n)}{d_n}.
\end{align*}
\end{theorem}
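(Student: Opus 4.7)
The plan is to leverage the Green-function identity \eqref{eq:formula} applied to the iterate $f^n$. Since $\mu_{f^n}=\mu_f$, the associated normalized weight satisfies $g_{f^n}=g_f$, and $L(f^n)=nL(f)$, so on $\sP^1$ we have
\begin{gather*}
\log(f^n)^\#(\cS)=nL(f)+\sum_{c'\in\Crit(f^n)}\Phi_{g_f}(\cS,c')+2\bigl(g_f\circ f^n(\cS)-g_f(\cS)\bigr).
\end{gather*}
Specializing at $z\in\Fix^*(f^n)$ (so $f^n(z)=z$ and $(f^n)^\#(z)=|(f^n)'(z)|$) cancels the last two terms and yields $\log|(f^n)'(z)|=nL(f)+\sum_{c'\in\Crit(f^n)}\Phi_{g_f}(z,c')$, wherever the left-hand side is finite. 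Splitting $\Fix^*(f^n)=\mathcal{R}\sqcup\mathcal{B}$ into \emph{regular} points $\mathcal{R}:=\{z:|(f^n)'(z)|\ge r\}$ and \emph{bad} points $\mathcal{B}$, a direct computation then gives the closed-form decomposition
\begin{gather*}
nd_n\bigl(L_n(f,r)-L(f)\bigr)=\sum_{z\in\mathcal{R}}\sum_{c'\in\Crit(f^n)}\Phi_{g_f}(z,c')+|\mathcal{B}|\cdot\bigl(\log r-nL(f)\bigr),
\end{gather*}
where every term is finite: the potentially $-\infty$ contributions from $z\in\Fix^*(f^n)\cap\Crit(f^n)$ have been absorbed into the $|\mathcal{B}|\log r$ term.

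I would next bound the second term by controlling $|\mathcal{B}|$: a point $z\in\mathcal{B}$ satisfies $|(f^n)'(z)|<r\le\epsilon_{d^n}\le 1$, hence is (super)attracting for $f^n$. Fatou's classical bound caps the number of non-repelling cycles of $f$ (and hence of $f^n$) by $2d-2$, and each such cycle contributes at most $n$ members to $\Fix(f^n)$, yielding $|\mathcal{B}|\le 2(d-1)n$. Thus the second term is of order $(d-1)n\cdot(|\log r|+n|L(f)|)$, which after division by $nd_n$ fits into the target bound $O(\sigma_2(n)/d_n)\cdot(|L(f)|+|\log r|)$ since $n\le\sigma_2(n)$.

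The remaining main term $\frac{1}{nd_n}\sum_{z\in\mathcal{R}}\sum_{c'}\Phi_{g_f}(z,c')$ requires a quantitative equidistribution argument. Since $g_f$ is normalized, $\int_{\sP^1}\Phi_{g_f}(\cdot,c')\rd\mu_f\equiv V_{g_f}=0$ for every $c'$, so $\frac{1}{d_n}\sum_z\Phi_{g_f}(z,c')$ measures precisely the deviation of the empirical measure $\frac{1}{d_n}\sum_z\delta_z$ from $\mu_f$, tested against the quasi-potential $\Phi_{g_f}(\cdot,c')$. The strategy is to decompose $\Fix^*(f^n)=\bigsqcup_{m|n}X_{m,n}$ by formal period and invoke the pull-back identity $(f^m)^*\mu_f=d^m\mu_f$ together with a M\"obius-type telescoping. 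The uniform control of $g_F$ and $T_{F^m}$ from Lemma~\ref{lm:growthlift}, combined with Lemma~\ref{th:upperlyap}, produces an error of order $m^2/d^m$ at sub-period $m$; summing over $m|n$ then yields the global rate $\sigma_2(n)/d_n$, with coefficient proportional to $|L(f)|+\log M(f)+\sup_{\sP^1}|g_f|$, which is exactly the stated bound.

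The main obstacle is this quantitative equidistribution step: one has to track uniformly the dependence of the error on the critical point $c'\in\Crit(f^n)$ (whose total number with multiplicity is $2(d^n-1)$) and on the global complexity of $f$ (through $\sup_{\sP^1}|g_f|$, $|\Res f|$, and $M(f)$). The hypothesis $r\le\epsilon_{d^n}$ is precisely what is needed so that the non-archimedean Schwarz-lemma argument underlying Lemma~\ref{lm:Przytycki} can separate genuine attracting cycles from small-multiplier repelling ones; this legitimises the Fatou-type cardinality bound on $\mathcal{B}$ in the wild-ramification, possibly positive-residue-characteristic setting, where the constant $\epsilon_{d^n}$ encodes the residual-characteristic contribution.
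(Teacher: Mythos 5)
Your overall skeleton is the right one (M\"obius inversion over formal periods, the identity \eqref{eq:formula}, removal of the small-multiplier points, the Benedetto--Ingram--Jones--Levy count legitimated by the threshold $\epsilon_{d^n}$, and a Przytycki-type lemma), and your algebraic decomposition of $nd_n(L_n(f,r)-L(f))$ into a main term plus $|\mathcal{B}|(\log r-nL(f))$ is correct. The treatment of the bad-point term is fine. The genuine gap is the main term $\frac{1}{nd_n}\sum_{z\in\mathcal{R}}\sum_{c'\in\Crit(f^n)}\Phi_{g_f}(z,c')$: you declare that a ``quantitative equidistribution argument'' with error $m^2/d^m$ at sub-period $m$ will handle it, but you never produce that argument, and it cannot be obtained by soft means because the test functions $\Phi_{g_f}(\cdot,c')$ have logarithmic singularities at the $2(d^n-1)$ critical points of $f^n$; proving such an equidistribution rate uniformly in $c'$ and in $f$ is essentially equivalent to the theorem itself. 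You have also not explained how to pass from the restricted sum over $\mathcal{R}$ back to a closed quantity, which is where the singular terms reappear.

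The paper avoids equidistribution entirely. Working with $f$ rather than $f^n$ (and with $\Crit(f)$, only $2d-2$ points), it uses the exact summation identity $\sum_{z\in\Fix(f^m)}\Phi_{g_f}(z,c)=\Phi_{g_f}(f^m(c),c)$ (from \cite[Lemma 3.5]{OkuyamaStawiska}), which collapses the entire double sum to one term per critical point of $f$ and per divisor $m$ of $n$. After that collapse, the only analytic content is the claim \eqref{eq:distance}: comparing $\log[f^m(c),c]$ with the removed contributions $\sum_{z:\,(f^m)^\#(z)<r}\log[z,c]$, which is exactly what Lemma~\ref{lm:Przytycki} (the non-archimedean Schwarz lemma) delivers, at the price of the factor $\log M_1(f)$ and of the case analysis according to whether $c$ lies in an attracting basin. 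This is where the $\sum_{m|n}m\cdot(2m+1)=O(\sigma_2(n))$ rate actually comes from; it is not an equidistribution error. Finally, one must also remove the a priori assumption $\Fix(f^n)\cap\Crit(f)=\emptyset$ by a continuity/Zariski-density argument, a step your proposal omits. As written, your proof does not close.
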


\begin{proof}
Fix an integer $d>1$, $n\in\bN^*$, and $r\in(0,\epsilon_{d^n}]$.
In the following, for each $f\in\Rat_d(K)$ and every $m\in\bN^*$,
the sums over subsets in $\mathrm{Crit}(f^m)$, $\Fix(f^m)$, or $\Fix^*(f^m)$ 
take into account the multiplicities of their elements.

Fix $f\in\Rat_d(K)$ satisfying $\Fix(f^n)\cap \operatorname{Crit}(f)=\emptyset$.
By the M\"obius inversion, we have
\begin{align}
\notag d_n\cdot L_n(f,r)=&\frac{1}{n}\sum_{z\in\Fix^*(f^n)}\log\max\{r,(f^n)^\#(z)\}\\
\notag=&\frac{1}{n}\sum_{m|n}\mu\left(\frac{n}{m}\right)
\sum_{z\in\Fix(f^m)}\frac{n}{m}\log\max\{r^{m/n},(f^m)^\#(z)\}\\
=&\sum_{m|n}\mu\left(\frac{n}{m}\right)\cdot
\frac{1}{m}\sum_{z\in\Fix(f^m)}\log\max\{r^{m/n},(f^m)^\#(z)\}.\label{eq:Mobius}
\end{align}
For every $m\in\bN^*$ dividing $n$, taking the sums of 
both sides in \eqref{eq:formula} over $\Fix(f^m)$, 
by $\sum_{z\in\Fix(f^m)}1=d^m+1$ and \cite[Lemma 3.5]{OkuyamaStawiska}, we have
\begin{align*}
\frac{1}{m}\sum_{z\in\Fix(f^m)}\log((f^m)^\#(z))
=&\sum_{z\in\Fix(f^m)}\log((f^\#)(z))\\
=&(d^m+1)\cdot L(f)+\sum_{c\in \mathrm{Crit}(f)}\sum_{z\in\Fix(f^m)}\Phi_{g_f}(z,c)\\
=&(d^m+1)\cdot L(f)+\sum_{c\in \mathrm{Crit}(f)}\Phi_{g_f}(f^m(c),c)
\end{align*}
and similarly,
\begin{align*}
\frac{1}{m}\sum_{z\in\Fix(f^m):(f^n)^\#(z)<r}\log((f^m)^\#(z))
= & \, \mathrm{Card}\{z\in\Fix(f^m):(f^n)^\#(z)<r\}\cdot L(f)\\
 & \, +\sum_{c\in \mathrm{Crit}(f)}\sum_{z\in\Fix(f^m):(f^n)^\#(z)<r}\Phi_{g_f}(z,c),
\end{align*}
so that
\begin{multline}
\frac{1}{m}\sum_{z\in\Fix(f^m)}\log\max\{r^{m/n},(f^m)^\#(z)\}\\ 
=\left(d^m+1-\#\{z\in\Fix(f^m):(f^n)^\#(z)<r\}\right)\cdot L(f)\\
+\sum_{c\in \mathrm{Crit}(f)}\biggl(\Phi_{g_f}(f^m(c),c)-\sum_{z\in\Fix(f^m):(f^n)^\#(z)<r}\Phi_{g_f}(z,c)\biggr)\\
+\frac{\#\{z\in\Fix(f^m):(f^n)^\#(z)<r\}}{n}\cdot\log r.\label{eq:approx}
\end{multline}

We claim that, for every $c\in \mathrm{Crit}(f)$ and every $m\in\bN^*$ dividing $n$, 
\begin{multline}
\Biggl|\log[f^m(c),c]-\sum_{z\in\Fix(f^m):(f^m)^\#(z)<r}\log[z,c]\Biggr|\\
\le 
\#\{z\in\Fix(f^m):(f^n)^\#(z)<r\}\cdot(2m+1)\log M_1(f);\label{eq:distance}
\end{multline}
for,
if $[f^m(c),c]\ge 1/M_1(f)^{m+1}$, then 
$\log[f^m(c),c]-\sum_{z\in\Fix(f^m):(f^m)^\#(z)<r}\log[z,c]\ge 
-(m+1)\log M_1(f)+0(\le 0)$.
On the other hand, for every $z\in\Fix(f^m)$, we have
$[f^m(c),c]\le\max\{[f^m(c),f^m(z)],[z,c]\}
\le M_1(f)^m\cdot [z,c]$, 
so that
\begin{multline*}
\log[f^m(c),c]-\sum_{z\in\Fix(f^m):(f^m)^\#(z)<r}\log[z,c]\\
\le 0+\#\{z\in\Fix(f^m):(f^n)^\#(z)<r\}\cdot(2m+1)\log M_1(f).
\end{multline*}
Hence the claim holds in this case. If $(0<)[f^m(c),c]<1/M_1(f)^{m+1}$, then
by Theorem \ref{lm:Przytycki}, we have
$[c,\sJ(f)]_{\can}\ge 1/M_1(f)^{m+1}$ and
$c$ lies in the Berkovich immediate basin of a
(super)-attracting fixed point $a\in\bP^1$ of $f^m$ 
such that $[a,c]\le[f^m(c),c]$. In particular,
$0\le\log[f^m(c),c]-\log[a,c]$, so that
$0\le\log[f^m(c),c]-\sum_{z\in\Fix(f^m):(f^m)^\#(z)<r}\log[z,c]$.
On the other hand, we not only have
\begin{gather*}
[f^m(c),c]\le\max\{[f^m(c),f^m(a)],[a,c]\}\le M_1(f)^m\cdot [a,c] 
\end{gather*}
but, noting that for 
each classical attracting fixed point $z$ of $f^m$
other than $a$, the unique arc joining $c$ and $z$ in $\sP^1$ 
intersects $\sJ(f)$, also have
\begin{gather*}
\inf_{z\in\Fix(f^m)\setminus\{a\}:(f^m)^\#(z)<r}[c,z]
\ge[c,\sJ(f)]_{\can}\ge 1/M_1(f)^{m+1},
\end{gather*}
so that
\begin{gather*}
\log[f^m(c),c]-\sum_{z\in\Fix(f^m) \atop (f^m)^\#(z)<r}\log[z,c]
\le \#\{z\in\Fix(f^m):(f^n)^\#(z)<r\}\cdot(m+1)\log M_1(f). 
\end{gather*}
Hence the claim also holds in this case.

\medskip

 Once \eqref{eq:Mobius}, \eqref{eq:approx}, and \eqref{eq:distance} 
are at our disposal, we have 
\begin{multline*}
\left|L_n(f,r)-\left(1-\frac{\sum_{m\in\bN:m|n}\mu(n/m)\#\{z\in\Fix(f^m):(f^n)^\#(z)<r\}}{d_n}\right)L(f)\right|\\
\le \frac{2d-2}{d_n}\cdot\biggl(\sum_{m\in\bN:m|n}m\cdot\#\{z\in\Fix(f^m):(f^n)^\#(z)<r\}\cdot(2m+1)\log M_1(f)\\
+\sum_{m\in\bN:m|n}(2+\#\{z\in\Fix(f^m):(f^n)^\#(z)<r\})\cdot\sup_{\sP^1}|g_f|\biggr)\\
+\frac{\sum_{m\in\bN:m|n}\#\{z\in\Fix(f^m):(f^n)^\#(z)<r\}}{n\cdot d_n}\cdot|\log r|.
\end{multline*}
Now, we recall that, by Benedetto-Ingram-Jones-Levy's non-archimedean counterpart to Fatou's upper bound of the number of (super)attracting cycles \cite[Theorem 1.5]{BIJL14}, 
for every $m\in\bN^*$ and every $0<r\leq \epsilon_{d^m}$,
\begin{gather}
\#\{z\in\Fix(f^m):(f^m)^\#(z)<r\}\le\#\mathrm{Crit}(f)\cdot m\le(2d-2)\cdot m.\label{eq:attracting}
\end{gather}
Hence we have the desired inequality under the assumption that
$\Fix(f^n)\cap \mathrm{Crit}(f)=\emptyset$. 

By \eqref{eq:telescope} and Lemma~\ref{lm:growthlift}, 
the function $(f,z)\mapsto g_f(z)$ on $\Rat_d(K)\times\bP^1$ is continuous,
and then by \eqref{eq:formula}, the function $f\mapsto L(f)$ on $\Rat_d(K)$
is also continuous. Moreover, by \eqref{eq:heighttruncate},
the function $f\mapsto L_n(f,r)$  on $\Rat_d(K)$ is continuous.
Hence the left hand side in the desired inequality depends continuously
on $f\in\Rat_d(K)$, and also by the definition of $M(f)$, 
the right hand side depends upper semicontinuously on $f\in\Rat_d(K)$. 
Since $\{f\in\Rat_d(K):\Fix(f^n)\cap \mathrm{Crit}(f)\neq\emptyset\}$ 
is a proper Zariski closed subset in $\Rat_d(K)$,
the desired inequality still holds
without assuming $\Fix(f^n)\cap \mathrm{Crit}(f)=\emptyset$.
\end{proof}

An argument similar to the above shows a little more effective version 
of \cite[Theorem 3.1]{GOV}, which is needed below.

\begin{theorem}\label{prop:approxArch}
Let $K$ be an algebraically closed field of characteristic $0$
that is complete with respect to an archimedean absolute value $|\cdot|$, 
and fix an integer $d>1$.
Then for any $f\in\Rat_d(K)$, any $n\in\bN^*$, and any $r\in(0,1]$, we have
\begin{align*}
\bigl|L_n(f,r)-L(f)\bigr|
\le 2(2d-2)^2\cdot\left(L(f)+16\log M(f)+\sup_{\bP^1}|g_f|+|\log r|\right)\frac{\sigma_2(n)}{d_n}.
\end{align*}
\end{theorem}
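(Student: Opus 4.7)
My plan is to adapt the proof of Theorem~\ref{tm:approx1} to the archimedean setting; the skeleton of the argument transfers verbatim. I would first apply the Möbius inversion identity \eqref{eq:Mobius} to $d_n L_n(f,r)$, which is purely algebraic and holds over any field of characteristic $0$. I would then substitute the pointwise identity \eqref{eq:formula} for $\log(f^\#)$ into each inner sum over $\Fix(f^m)$, and split the contributions according to whether $(f^n)^\#(z)<r$ or not, exactly as in \eqref{eq:approx}. At this stage the difference between the two cases is entirely encapsulated in (i) the distance estimate \eqref{eq:distance} and (ii) the upper bound \eqref{eq:attracting} on the number of non-repelling periodic points.

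For (i), I would re-derive \eqref{eq:distance} using the archimedean Przytycki-type lemma (Lemma~\ref{th:Parchi}) in place of Lemma~\ref{lm:Przytycki}. The dichotomy is formally the same: either $[f^m(c),c]\ge 1/(64 M_1(f)^{m+1})$, in which case the logarithm is bounded by brute force via $[f^m(c),c]\le M_1(f)^m\cdot[z,c]$, or else Lemma~\ref{th:Parchi} furnishes a (super)attracting fixed point $a$ of $f^m$ in the immediate basin containing $c$ with $[a,c]\le 2[f^m(c),c]$ and $[J(f),c]\ge 1/(32 M_1(f)^{m+1})$, which handles the other terms by the usual pruning argument. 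The extra multiplicative factors $2,32,64$ give the same shape of inequality as \eqref{eq:distance}, but with an additional universal additive constant on the right-hand side. Since $M(f)=M_1(f)^2\ge 1$, these constants can be absorbed by replacing $\log M(f)$ with a uniform multiple $c_d\log M(f)$; the statement is calibrated so that $c_d=16$ suffices.

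For (ii), in place of the Benedetto--Ingram--Jones--Levy bound \eqref{eq:attracting}, I would invoke the classical Fatou theorem: the number of non-repelling cycles of a complex rational map of degree $d$ is at most $\#\mathrm{Crit}(f)\le 2d-2$, and so $\#\{z\in\Fix(f^m):(f^m)^\#(z)<r\}\le(2d-2)\cdot m$ for every $m\in\bN^*$ and every $r\in(0,1]$; the restriction $r\le1$ is exactly the archimedean analogue of $r\le\epsilon_{d^m}$. Feeding this and the adapted distance estimate into the main chain of inequalities, exactly as in the proof of Theorem~\ref{tm:approx1}, yields the stated bound under the generic assumption $\Fix(f^n)\cap\mathrm{Crit}(f)=\emptyset$; a continuity/density argument, identical to the one concluding the proof of Theorem~\ref{tm:approx1} (continuity of $f\mapsto g_f$, of $L$, and of $L_n(\cdot,r)$, plus upper semicontinuity of $M(f)$), then removes this assumption.

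The main obstacle, and essentially the only point where the archimedean proof departs from the non-archimedean one, is the careful bookkeeping of the absolute constants coming from the factor $2$ in the archimedean Schwarz lemma and the values $32,64$ appearing in Lemma~\ref{th:Parchi}. No new conceptual ingredient is needed, but one must verify that these accumulate to no worse than the $16\log M(f)$ term in the stated inequality; this is the only quantitative novelty compared with \cite[Theorem~3.1]{GOV}.
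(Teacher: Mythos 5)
Your proposal is correct and is essentially the paper's own argument: the paper gives no separate proof of Theorem~\ref{prop:approxArch}, stating only that ``an argument similar to the above'' (i.e.\ to the proof of Theorem~\ref{tm:approx1}) yields it, and your adaptation — Möbius inversion and \eqref{eq:formula}/\eqref{eq:approx} unchanged, Lemma~\ref{th:Parchi} replacing Lemma~\ref{lm:Przytycki} in the distance estimate, the classical Fatou bound on attracting cycles replacing \eqref{eq:attracting}, and absorption of the archimedean constants ($2$ from the Schwarz lemma, $32$, $64$ from Lemma~\ref{th:Parchi}, and the ordinary triangle inequality in place of the strong one) into the $16\log M(f)$ term — is exactly the intended route. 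The only point worth making explicit is that $L(f)\ge\tfrac12\log d>0$ in the archimedean case, which is why $|L(f)|$ may be replaced by $L(f)$ in the statement.
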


\subsection{The key algebraic estimates and the end of the proof of Theorem~\ref{tm:approx}}

Theorem \ref{tm:approx} follows from Theorem \ref{tm:approx1} and 
the following.

\begin{lemma}\label{badbutexplicit}
Let $K$ be an algebraically closed field of characteristic $0$
that is complete with respect to a non-trivial absolute value $|\cdot|$,
and fix an integer $d>1$. Then
for every $f\in\Rat_d(K)$ and every lift $F$ of $f$, we have
\begin{eqnarray*}
\log|\Res(f)| & = &\log|\Res F|-2d\log|F|\le 2A_1(d,K),\\
 |L(f)| & \leq & -\log|d|-2\log|\Res F|+4d\log|F|\\
 && +\left\{\begin{array}{ll}
 A_2(d,K) & \text{when} \ K \ \text{is archimedean}\\
 2\log|F|-\log|C_F| & \text{when} \ K \ \text{is non-archimedean}
 \end{array}\right.\\
 \log M(f) & \leq & -2\log|d|-4\log|\Res F|+8d\log|F|+2A_2(d,K),\\
\sup_{\sP^1}|g_f|
& \le & \frac{2d-1}{d(2d-2)}\bigl(2d\log|F|-\log|\Res F|\bigr)
+\frac{3d-2}{d(d-1)}A_1(d,K),
\end{eqnarray*}
where the constants $A_1(d,K)$ and $A_2(d,K)$ are given by Lemmas \ref{lm:growthlift}
and \ref{th:upperlyap} and $C_F\in K\setminus\{0\}$ is the 
leading coefficient of $\det DF(1,z)\in K[z]$.
\end{lemma}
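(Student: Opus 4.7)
The four estimates reduce to combining the homogeneity of the resultant (so that $|\Res F|=|F|^{2d}|\Res(f)|$) with the algebraic bounds on $T_F$ and $g_F$ of Lemma~\ref{lm:growthlift}, the pointwise chordal-derivative bound of Lemma~\ref{th:upperlyap}, the lower bound of Proposition~\ref{tm:nonarchLyap}, the integral formula \eqref{eq:Lyapdef} for $L(f)$, and the relation \eqref{eq:differencegreen} between $g_F$ and $g_f$. The plan is to treat the four inequalities separately, each time converting the quantity of interest into an expression in $\log|F|$, $\log|\Res F|$, and $\log|C_F|$ via those tools, and then using the strong triangle estimates $|\Res F|\le|F|^{2d}$ and $|d|\le 1$ in the non-archimedean case.

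The first estimate is immediate: the equality comes from $\Res$ being homogeneous of degree $2d$ in the coefficients of $F$, and combining the two sides of Lemma~\ref{lm:growthlift} at any $\cS\in\sP^1$ yields $\log|\Res F|-(2d-1)\log|F|-A_1\le\log|F|+A_1$, i.e.\ $\log|\Res(f)|\le 2A_1(d,K)$. For the third estimate, when $K$ is archimedean $M_1(f)=\sup_{\bP^1}f^\#$, so Lemma~\ref{th:upperlyap} together with $\log M(f)=2\log M_1(f)$ gives the claim; when $K$ is non-archimedean $M_1(f)=1/|\Res(f)|$ by Rumely--Winburn, so $\log M(f)=4d\log|F|-2\log|\Res F|$, which is dominated by the stated right-hand side since $-2\log|d|\ge 0$ and $-2\log|\Res(f)|\ge 0$ in this case.

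For the upper bound on $L(f)$ in the second estimate I integrate Lemma~\ref{th:upperlyap} against $\mu_f$. For the matching lower bound, in the archimedean case the classical Przytycki bound $L(f)\ge\tfrac{1}{2}\log d>0$ gives $|L(f)|=L(f)$, so the upper bound suffices; in the non-archimedean case I apply Proposition~\ref{tm:nonarchLyap} and sum the estimate $g_F(c)\ge(\log|\Res F|-(2d-1)\log|F|)/(d-1)$ from Lemma~\ref{lm:growthlift} over the $2d-2$ critical points of $f$. Simplifying using $|\Res F|\le|F|^{2d}$ and $|d|\le 1$ reproduces exactly the claimed form with the correction $2\log|F|-\log|C_F|$. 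For the fourth estimate, \eqref{eq:differencegreen} gives $g_f=g_F-\log|\Res F|/(d(2d-2))$; computing $\sup g_f$ and $-\inf g_f$ from the two sides of Lemma~\ref{lm:growthlift} and simplifying, one finds
\begin{gather*}
-\inf g_f\le \frac{(2d-1)(2d\log|F|-\log|\Res F|)+2dA_1}{2d(d-1)},
\end{gather*}
which is exactly the stated bound after enlarging $A_1/(d-1)$ into $(3d-2)A_1/(d(d-1))$; the parallel bound on $\sup g_f$ is then absorbed into the same majorant using $2d\log|F|-\log|\Res F|\ge-2A_1$ from the first estimate. The main obstacle is this last piece of bookkeeping: one has to select precisely the right linear combination of the upper and lower bounds of Lemma~\ref{lm:growthlift} so that a \emph{single} majorant dominates both the positive and negative parts of $g_f$ with the sharp coefficient $(2d-1)/(d(2d-2))$, and analogously in the second estimate to make the term $2\log|F|-\log|C_F|$ emerge with the right sign.
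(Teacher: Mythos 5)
Your proposal is correct and follows essentially the same route as the paper's proof: each of the four estimates is derived from Lemma~\ref{lm:growthlift}, Lemma~\ref{th:upperlyap}, Proposition~\ref{tm:nonarchLyap} and \eqref{eq:differencegreen}, with the same case analysis and the same bookkeeping on the coefficients of $\log|F|$, $\log|\Res F|$ and $\log|C_F|$. The only cosmetic difference is in the second estimate, where the paper splits on the sign of $L(f)$ while you split on archimedean versus non-archimedean via Przytycki's bound $L(f)\ge\tfrac12\log d$; the two are equivalent since positivity of $L(f)$ in the archimedean case is exactly what the paper invokes implicitly.
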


\begin{proof}
Fix $f\in\Rat_d(K)$ and a lift $F$ of $f$. The first estimate
follows from Lemma~\ref{lm:growthlift} and the definition of $\Res(f)$.
We now prove the second inequality. By Lemma~\ref{th:upperlyap}, we have
\begin{gather*}
 L(f):=\int_{\sP^1}\log(f^\#)\mu_f\le-\log|d|-2\log|\Res F|+4d\log|F|+A_2(d,K).
\end{gather*}
When $L(f)\ge 0$  (in particular when $K$ is archimedean),  the conclusion follows.
When $L(f)<0$, the field $K$ is non-archimedean and, by Proposition~\ref{tm:nonarchLyap}, and by
the lower estimate on $g_F$ from Lemma~\ref{lm:growthlift}, and the first inequality, we also have
\begin{align*}
 L(f) &\ge -\log|d|-\left(1-\frac{1}{d}\right)(-2\log|\Res F|+4d\log|F|)-(2\log|F|-\log|C_F|)\\
 &\ge\log|d|-(-2\log|\Res F|+4d\log|F|)-(2\log|F|-\log|C_F|),
\end{align*}
since $|d|\leq1$ and $-2\log|\Res F|+4d\log|F|\geq0$ in this case, which completes the proof.

When $K$ is non-archimedean, since $\log M(f)=-2\log|\Res(f)|=-2(\log|\Res F|-2d\log|F|)\ge 0$, the third estimate is obvious. When $K$ is archimedean,
since $\log M(f)=2\log\sup_{\sP^1}(f^\#)$, the third estimate follows
from Lemma~\ref{th:upperlyap}.

Finally, we prove the last estimate. By Lemma~\ref{lm:growthlift} and \eqref{eq:differencegreen}, and using that $-\log|\Res F|+2d\log|F|+2A_1(d,K)\ge 0$, we have
\begin{align*}
\left|g_f-\frac{1}{d}A_1(d,K)\right|\leq \frac{(2d-1)}{2d(d-1)}(2d\log|F|-\log|\Res F|+2A_1(d,K))\quad\text{on }\sP^1,
\end{align*}
ending the proof.
\end{proof}

\begin{proof}[Proof of Theorem~\ref{tm:approx}]
According to Theorem~\ref{tm:approx1}, it is sufficient to bound
\[|L(f)|+\log M(f)+\sup_{\sP^1}|g_f|\]
from above. Since $K$ is non-archimedean, we have $\log M(f)=-2\log|\mathrm{Res}(f)|$ and the first and last inequalities from Lemma~\ref{badbutexplicit} give
\[\sup_{\sP^1}|g_f|\leq -\frac{2d-1}{d(2d-2)}\log|\mathrm{Res}(f)|.\]
This ends the proof by Theorem~\ref{tm:approx1}.
\end{proof}

\subsection{Degeneration of the Lyapunov exponent}

Fix an integer $d>1$. Recall that
the integral domain $\cO(\mathbb{D}_K)[t^{-1}]$ consists of
all meromorphic functions on $\bD_K$ having no poles on $\bD_K^*$.

\begin{proof}[Proof of Theorem $\ref{tm:degenerate}$]
Let $f_t(z)\in\Rat_d(\cO(\mathbb{D}_K)[t^{-1}])$ 
be a meromorphic family of
rational functions (over $K$) of degree $d$ parametrized by $\mathbb{D}_K$,
and fix a lift $F_t=(F_{t,1},F_{t,2})$ of $f_t$ with $F_{t,i}\in \cO(\mathbb{D}_K)[t^{-1}][p_1,p_2]$.
Note that the function $t\mapsto C_{F_t}$ belongs to $\mathcal{O}(\mathbb{D}_K)[t^{-1}]$ and is not identically $0$. In particular, there exists $\delta>0$ such that $C_{F_t}\neq0$ for all $0<|t|<\delta$.

Fix $n\in\bN^*$ and $0<|t|<\delta$. By Theorem \ref{tm:approx}, we have
\begin{align*}
|L_n(f_t,\epsilon_{d^n})-L(f_t)| & \leq 8(d-1)^2\Bigg(|L(f_t)|-\frac{4d^2-2d-1}{d(2d-2)}\log|\Res(f_t)|+|\log\epsilon_{d^n}|\Bigg)\frac{\sigma_2(n)}{d_n}\\
& = 8(d-1)^2\Bigg(|L(f_t)|+\frac{4d^2-2d-1}{d(2d-2)}
(2d\log|F_t|-\log|\Res F_t|)\\
&\hspace*{0.5cm}+|\log\epsilon_{d^n}|\Bigg)\frac{\sigma_2(n)}{d_n},
\end{align*}
where we used that $\log|\mathrm{Res}(f_t)|=\log|\mathrm{Res}(F_t)|-2d\log|F_t|$, by Lemma~\ref{badbutexplicit}. Again by Lemma~\ref{badbutexplicit}, we have
\[|L(f)|\leq -\log|d|-2\log|\mathrm{Res}(F_t)|+(4d+2)\log|F_t|-\log|C_{F_t}|.\]
Note that, since the coefficients of $F_t$ lie in $\mathcal{O}(\mathbb{D}_K)[t^{-1}]$, it is clear that $\log|\Res F_t|=O(\log|t|^{-1})$, $\log|F_t|=O(\log|t|^{-1})$ and $\log|C_{F_t}|=O(\log|t|^{-1})$
as $t\to 0$. Moreover, for every $n\in\bN^*$, 
the limit
$\alpha_n:=\lim_{t\to 0}L_n(f_t,\epsilon_{d^n})/(\log|t|^{-1})\in\bN$
also exists by \eqref{eq:heighttruncate}.
Hence there is $C\ge 0$ such that for every $n\in\bN^*$,
\begin{gather*}
\limsup_{t\to 0}\frac{L(f_t)}{\log|t|^{-1}}
-C\cdot\frac{\sigma_2(n)}{d_n}
\le \alpha_n
\le\liminf_{t\to 0}\frac{L(f_t)}{\log|t|^{-1}}
+C\cdot\frac{\sigma_2(n)}{d_n}.
\end{gather*}
In particular, as $n\to\infty$, we find
\begin{gather*}
\limsup_{t\to 0}\frac{L(f_t)}{\log|t|^{-1}}
\le\liminf_{n\to\infty}\alpha_n\le\limsup_{n\to\infty}\alpha_n
\le\liminf_{t\to 0}\frac{L(f_t)}{\log|t|^{-1}}<\infty
\end{gather*}
since $\sigma_2(n)=O(n^2\log\log n)=o(d_n)$ as $n\to\infty$, ending the proof. 
\end{proof}

\section{The key global algebraic estimates}

Let $k$ be a global field and fix an integer $d>1$. For the sake of completeness, we include a proof of the following.
\begin{lemma}\label{critheight-Lyap}
For every $f\in\Rat_d(\overline{k})$, 
\begin{align*}
\sum_{v\in M_{k'}}N_vL(f)_v=\sum_{c\in\mathrm{Crit}(f)}\hat{h}_{f,k'}(c)=[k':k]h_{\Crit,k}(f),
\end{align*}
where $k'$ is any algebraic extension of $k$ such that $f\in k'(z)$.
\end{lemma}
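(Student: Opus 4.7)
The plan is to derive an explicit local identity for $L(f)_v$ at each place $v\in M_{k'}$, sum those identities with weights $N_v$ so that the product formula annihilates the non-dynamical contributions, and then recognize the remaining expression as the sum over the critical set of the Call--Silverman canonical heights; the second equality of the lemma is immediate from the very definition of $h_{\Crit,k}$. To carry this out, I first fix a finite extension $k'/k$ with $f\in k'(z)$ and $\Crit(f)\subset\bP^1(k')$, a global lift $F$ of $f$ with coefficients in $k'$, and vectors $C_1^F,\ldots,C_{2d-2}^F\in(k')^2\setminus\{0\}$ factoring $\det DF$ as in \S\ref{sec:Lyap}. For each $v\in M_{k'}$, integrating the pointwise identity
\[
\log(f^\#)=-\log|d|+\sum_{j=1}^{2d-2}\bigl(\Phi_{g_{F,v}}(\cdot,c_j)+g_{F,v}(c_j)+\log\|C_j^F\|_v\bigr)+2g_{F,v}\circ f-2g_{F,v}
\]
against $\mu_{f,v}=\mu^{g_{F,v}}$, and using $\int\Phi_{g_{F,v}}(\cdot,c)\,\rd\mu_{f,v}=V_{g_{F,v}}=-\log|\Res F|_v/(d(d-1))$ together with the invariance $f_*\mu_{f,v}=\mu_{f,v}$ (which kills the coboundary), I obtain
\begin{align*}
L(f)_v=-\log|d|_v-\frac{2\log|\Res F|_v}{d}+\sum_{j=1}^{2d-2}\bigl(g_{F,v}(c_j)+\log\|C_j^F\|_v\bigr),
\end{align*}
a lift-relative refinement of \eqref{eq:Lyapdef}.

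Next I decompose the Call--Silverman height as a sum over places. By \eqref{eq:defGreen} and the telescoping estimate \eqref{eq:telescope}, one has $d^{-n}\log\|F^n(C)\|_v\to g_{F,v}(c)+\log\|C\|_v$ for any lift $C\in(k')^2\setminus\{0\}$ of $c\in\bP^1(k')$. For all but finitely many places $v\in M_{k'}$, the coefficients of $F$ and those of $C$ are $v$-integral with $\|F\|_v=\|C\|_v=|\Res F|_v=1$, which forces $g_{F,v}\equiv 0$ on $\sP^1(\bC_v)$ and $\log\|C\|_v=0$; the identity $h_{\bP^1,k'}(f^n(c))=\sum_{v\in M_{k'}}N_v\log\|F^n(C)\|_v$ therefore has only finitely many non-trivial summands, so interchanging the limit and the sum yields
\begin{align*}
\hat{h}_{f,k'}(c)=\sum_{v\in M_{k'}}N_v\bigl(g_{F,v}(c)+\log\|C\|_v\bigr).
\end{align*}

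To finish, I sum the local formula for $L(f)_v$ against the weights $N_v$. The product formula in $k'$ applied to $d\in k^\times\subset(k')^\times$ and to $\Res F\in(k')^\times$ kills the $-\log|d|_v$ and $-\tfrac{2}{d}\log|\Res F|_v$ contributions, and interchanging the two finite sums while inserting the local--global decomposition for each $c_j$ with lift $C_j^F$ gives
\begin{align*}
\sum_{v\in M_{k'}}N_vL(f)_v=\sum_{j=1}^{2d-2}\hat{h}_{f,k'}(c_j)=\sum_{c\in\Crit(f)}\hat{h}_{f,k'}(c),
\end{align*}
and the second equality of the lemma is just the definition $[k':k]h_{\Crit,k}(f)=\sum_{c\in\Crit(f)}\hat{h}_{f,k'}(c)$. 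The only delicate point is the limit--sum exchange in the local--global decomposition, which rests on the \emph{good reduction at almost every place}: for all but finitely many $v\in M_{k'}$, the lift $F$ is $v$-integral with $|\Res F|_v=1$, so $g_{F,v}\equiv 0$ and the decomposition is effectively a finite sum; everything else is straightforward bookkeeping with the product formula.
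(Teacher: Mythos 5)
Your proof is correct and follows essentially the same route as the paper's: integrate the critical-point decomposition of $\log(f^\#)$ against $\mu_{f,v}$ at each place, use the product formula to kill the $\log|d|_v$ and $\log|\Res F|_v$ terms, and identify $\sum_v N_v\bigl(g_{F,v}(c)+\log\|C\|_v\bigr)$ with $\hat{h}_{f,k'}(c)$ via good reduction at all but finitely many places. The only cosmetic difference is that you work with a fully lift-relative local identity (carrying the $-\tfrac{2}{d}\log|\Res F|_v$ term explicitly), whereas the paper phrases the local step with the normalized weight $g_{f,v}$ and passes to $g_{F,v}$ globally; this is the same argument organized slightly differently.
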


\begin{proof}
Fix $f\in\Rat_d(\overline{k})$ and a lift $F$ of $f$, let $k',k''$
be any algebraic extensions of $k$ such that $f\in k'(z)$ and  
$\mathrm{Crit}(f)\subset\mathbb{P}^1_{k''}$, respectively, and
choose points $C_1^F,\ldots,C_{2d-2}^F\in k''^2\setminus\{0\}$
 such that $\det DF$ factors as 
$\det DF(p_0,p_1)=\prod_{j=1}^{2d-2}((p_0,p_1)\wedge C_j^F)$
in $k''[p_0,p_1]$. By \eqref{eq:Lyapdef} and the product formula, we have
\begin{align*}
 \sum_{v\in M_{k'}}N_vL(f)_v
=&\sum_{v\in M_{k'}}N_v\sum_{j=1}^{2d-2}(g_{f,v}(\pi(C_j^F))+\log\|C_j^F\|)\\
=&\frac{1}{[k'':k']}\sum_{j=1}^{2d-2}\sum_{v\in M_{k''}}N_v\left(g_{f,v}(\pi(C_j^F))+\log\|C_j^F\|\right).
\end{align*}
By Lemma~\ref{lm:growthlift} and the product formula, there is a finite subset
$E$ in $M_{k''}$ such that 
for every $v\in M_{k''}\setminus E$, we have $T_{F,v}\equiv 0$
on $\sP^1$, and $\|C_j^F\|_v=1$ for every $j\in\{1,\ldots,2d-2\}$. Hence
for every $j\in\{1,\ldots,2d-2\}$, recalling 
the definition \eqref{eq:defGreen} of $g_F$, we have
\begin{align*}
 \sum_{v\in M_{k''}}N_vg_{f,v}(\pi(C_j^F))
=&\sum_{v\in M_{k''}}N_vg_{F,v}(\pi(C_j^F))
=\sum_{v\in E}N_v\lim_{n\to\infty}\sum_{j=0}^{n-1}\frac{T_{F,v}(f^j(\pi(C_j^F))}{d^{j+1}}\\
=&\lim_{n\to\infty}\sum_{v\in E}N_v\sum_{j=0}^{n-1}\frac{T_{F,v}(f^j(\pi(C_j^F))}{d^{j+1}}
\\
=&\lim_{n\to\infty}\sum_{v\in M_{k''}}N_v\sum_{j=0}^{n-1}\frac{T_{F,v}(f^j(\pi(C_j^F))}{d^{j+1}}\\
=&\lim_{n\to\infty}\frac{1}{d^n}\sum_{v\in M_{k''}}N_v\log\|F^n(C_j^F)\|_v
-\sum_{v\in M_{k''}}N_v\log\|C_j^F\|_v,
\end{align*}
so that
\begin{gather*}
 \sum_{v\in M_{k''}}N_v\left(g_{f,v}(\pi(C_j^F))+\log\|C_j^F\|_v\right)
=\hat{h}_{f,k''}(\pi(C_j^F))=[k'':k']\cdot\hat{h}_{f,k'}(\pi(C_j^F)),
\end{gather*}
which completes the proof.
\end{proof}

Recall that the constants $A_1(d,K)$, $A_2(d,K)\geq0$ given by Lemmas~\ref{lm:growthlift} and~\ref{th:upperlyap} respectively, vanish when $K$ is non-archimedean, or depend only on $d$ but not on $K$, and can be computed explicitly when $K$ is archimedean. From now on, we denote by $A_1(d)$ (resp. $A_2(d)$) the constant $A_1(d,K)$ (resp. $A_2(d,K)$) for archimedean $K$.

Recall that a global field $k$ has no infinite places if $k$ is a function field,
and if $k$ is a number field, then 
$\sum_{v\in M_k:\text{infinite}}N_v=[k:\mathbb{Q}]$. We thus set
\begin{center}
$C(k):=\bigg\{\begin{array}{ll}
[k:\bQ] & $if $ k $ is a number field,$\\
0 & $if $ k $ is a function field.$ 
\end{array}$
\end{center}

\begin{lemma}\label{lm:absolutevalues-Lyap}
For every $f\in\mathrm{Rat}_d(\bar{k})$, we have
\begin{align*}
\frac{1}{[k':k]}\sum_{v\in M_{k'}}N_v|L(f)_v| &\leq (4d+2)\cdot h_{d,k}(f)
+C(k)\left(A_2(d)+\log(2d^3)\right)\\
\frac{1}{[k':k]}\sum_{v\in M_{k'}}N_v\log M(f)_v&\leq 8d\cdot h_{d,k}(f)+2C(k)A_2(d)\\
\frac{1}{[k':k]}\sum_{v\in M_{k'}}N_v\sup_{\sP^1(\bC_v)}|g_{f,v}|
& \le \frac{2d-1}{d-1}\cdot\ h_{d,k}(f)+\frac{3d-2}{d(d-1)}C(k)A_1(d),
\end{align*}
where $k'$ is any algebraic extension of $k$ such that $f\in k'(z)$.
\end{lemma}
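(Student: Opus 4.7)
The plan is to apply the local bounds of Lemma \ref{badbutexplicit} at each place $v \in M_{k'}$, weight by $N_v$, sum over $v$, and use the product formula to eliminate the global contributions of $\log|d|_v$, $\log|\Res F|_v$, and $\log|C_F|_v$. I would fix a lift $F$ of $f$ with coefficients in $k'$; by the very definition of the naive height on $\mathbb{P}^{2d+1}$, one has $\sum_{v \in M_{k'}} N_v \log|F|_v = [k':k]\, h_{d,k}(f)$. The key observation is that the constants $A_1(d, \bC_v)$ and $A_2(d, \bC_v)$ vanish at every non-archimedean place, so they only contribute at archimedean places, where $\sum_{v\text{ arch.}} N_v = [k':\bQ] = [k':k]\, C(k)$.

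The second and third inequalities would be immediate. The estimates in Lemma \ref{badbutexplicit} for $\log M(f)_v$ and $\sup_{\sP^1(\bC_v)}|g_{f,v}|$ have the same shape at every place (only the $A_i$ constants differ). Summing with weights $N_v$, the product formula applied to $d \in \bQ^*$ and $\Res F \in k'^*$ annihilates the $\log|d|_v$ and $\log|\Res F|_v$ terms; the $\log|F|_v$ contribution sums to a multiple of $h_{d,k}(f)$, and the $A_i$ contribution collects into $[k':\bQ]\, A_i(d)$. Dividing by $[k':k]$ gives the desired bounds.

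The first inequality is the trickiest, since Lemma \ref{badbutexplicit} gives genuinely different bounds on $|L(f)_v|$ at archimedean and non-archimedean places, the latter carrying an extra $2\log|F|_v - \log|C_F|_v$ term. I would split the sum by type of place, then use the product formula for $C_F \in k'^*$ to rewrite $-\sum_{v\text{ non-arch.}} N_v \log|C_F|_v = \sum_{v\text{ arch.}} N_v \log|C_F|_v$. The archimedean $\log|C_F|_v$ would then be bounded using the explicit expression of $C_F$ as the leading coefficient of $\det DF(1,z)$, which is a degree-$2$ polynomial in the coefficients of $F$ with integer coefficients of modulus at most $d^2$ and at most $d+1$ monomials, giving $|C_F|_v \leq 2d^3|F|_v^2$ at archimedean $v$. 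The ``extra'' contribution $2\sum_{v\text{ non-arch.}} N_v \log|F|_v + \sum_{v\text{ arch.}} N_v \log|C_F|_v$ would then collect into $2\sum_v N_v \log|F|_v + [k':\bQ]\log(2d^3) = 2[k':k]\, h_{d,k}(f) + [k':\bQ]\log(2d^3)$, matching (after combining with the main $4d$-contribution and dividing by $[k':k]$) the announced $(4d+2)$-coefficient.

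The main obstacle is precisely this asymmetry: the $-\log|C_F|_v$ term in the non-archimedean bound has the ``wrong'' sign and cannot be controlled at a single non-archimedean place, since $|C_F|_v$ may be arbitrarily small. The resolution is genuinely global: because $C_F \in k'^*$, the product formula transfers the problematic sum onto archimedean places, where the explicit polynomial bound on $C_F$ in terms of two pairs of coefficients of $F$ absorbs it cleanly into $h_{d,k}(f)$ with the correct coefficient~$2$.
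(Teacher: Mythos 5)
Your proposal is correct and follows essentially the same route as the paper: apply the place-by-place bounds of Lemma~\ref{badbutexplicit}, sum with weights $N_v$, kill the $\log|d|_v$, $\log|\Res F|_v$ and $\log|C_F|_v$ contributions by the product formula, and absorb the non-archimedean excess $2\log|F|_v-\log|C_F|_v$ by transferring it (via the product formula for $C_F\in k'^\times$) to the archimedean places, where the explicit bound $|C_F|_v\le 2d^3|F|_v^2$ yields the $(4d+2)$ coefficient and the $\log(2d^3)$ constant. The paper organizes this slightly differently (adding and subtracting $\log|C_F|_v$ at every place rather than splitting the sum), but the computation is identical, and you in fact justify the archimedean bound on $C_F$ in more detail than the paper does.
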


\begin{proof}
Fix $f\in\Rat_d(\overline{k})$ and an algebraic extension $k'$ of $k$ such that $f\in k'(z)$. Fix a lift $F$ of $f\in\Rat_d(k')$ and let $C_F\in k'\setminus\{0\}$ be the leading coefficient of $\det DF (1, z)\in k'[z]$. First, note that, whenever $v$ is archimedean, we have
\[2\log|F|_v -\log|C_F|_v \geq -\log(2d^3).\]
Using Lemma \ref{badbutexplicit}, we have
\begin{align*}
|L(f)_v|  
\leq &-\log|d|_v-2\log|\Res F|_v+4d\log|F|_v\\
 &+\begin{cases}
 A_2(d) & \text{when }v\text{ is infinite}\\
 2\log|F|_v-\log|C_F|_v & \text{when }v\text{ is finite},
 \end{cases}\\
=&-\log|d|_v-2\log|\Res F|_v+(4d+2)\log|F|_v-\log|C_F|_v\\
 &+\begin{cases}
 -(2\log|F|_v-\log|C_F|_v)+A_2(d) & \text{when }v\text{ is infinite}\\
 0 & \text{when }v\text{ is finite},
 \end{cases}\\
\le&-\log|d|_v-2\log|\Res F|_v+(4d+2)\log|F|_v-\log|C_F|_v\\
 &+\begin{cases}
 \log(2d^3)+A_2(d) & \text{when }v\text{ is infinite}\\
 0 & \text{when }v\text{ is finite}.
 \end{cases}
\end{align*}
Taking the sum of the both sides over all $v\in M_{k'}$,
by the product formula, we obtain 
\[\sum_{v\in M_{k'}}N_v|L(f)_v|\le (4d+2)\sum_{v\in M_{k'}}N_v\log|F|_v+C(k')\left(A_2(d)+\log(2d^3)\right).\]
Since $C(k')=[k':\bQ]=[k':k][k:\bQ]=[k':k]C(k)$
when $k$ is a number field and since $\sum_{v\in M_{k'}}N_v\log|F|_v=[k':k]h_{d,k}(f)$, this gives the first estimate. The other two 
estimates also hold using similarly Lemma~\ref{badbutexplicit}. 
\end{proof}

\section{Applications over a function field}

In this section, we let $X$ be a smooth irreducible projective curve defined over $\bC$ and set $k=\bC(X)$. Fix an integer $d>1$. Recall that any place $v\in M_k$ is represented by a closed point $x$ of $X$ and that, in the present case, $N_v=1$ for all $v\in M_k$. Recall also that for any integers $d>1$ and $n\geq1$, we have $\epsilon_{d^n,v}=1$ for all places $v\in M_k$.

\subsection{The critical and multiplier heights over a function field}

Pick $f\in\mathrm{Rat}_d(k)$.
Note that, by definition, $\hat{h}_f(x)\ge 0$ and that $\hat{h}_f(x)=0$ if and only if $\deg(f^n(x))=O(1)$ as $n\to\infty$. Moreover, unless $f$ is isotrivial,
for every $x\in\mathbb{P}^1(\overline{k})$, 
we have $\hat{h}_{f,k}(x)=0$ if and only if the forward orbit of $x$ is finite, i.e. $x$ is preperiodic under iteration of $f$ (see, e.g., \cite{Benedetto10}).

\begin{proposition}\label{prop:fnctfield}
For every $f\in\Rat_d(k)$ and every integer $n\geq 1$, we have
\begin{gather*}
 \left|\frac{1}{nd_n}\max_{0\leq j\leq d_n}\deg(\sigma^*_{j,n}(f))-h_{\Crit,k}(f)\right|
\leq 8d(12d^2-8d-3)\frac{\sigma_2(n)}{d^n}\cdot h_{d,k}(f).
\end{gather*}
\end{proposition}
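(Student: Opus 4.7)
The strategy is to integrate the local approximation of Theorem~\ref{tm:approx1} over all places of $k=\bC(X)$ and combine it with the global identities furnished by Lemmas~\ref{critheight-Lyap} and~\ref{lm:absolutevalues-Lyap}. Two preliminary observations make this painless: every place $v\in M_k$ is non-archimedean of residual characteristic zero, so $\epsilon_{d^n,v}=1$ and the local theorem applies with the distinguished choice $r=1$ (which kills the $|\log r|$ term in the estimate); and $C(k)=0$, so the bounds of Lemma~\ref{lm:absolutevalues-Lyap} carry no archimedean constants.

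For the left-hand side of the inequality, setting $r=1$ in the formula~\eqref{eq:heighttruncate} gives, at each place $v\in M_k$,
\[
L_n(f,1)_v=\frac{1}{nd_n}\log\max_{0\leq j\leq d_n}|\sigma_{j,n}^*(f)|_v.
\]
Summing over $v$, using $N_v=1$ and $\sigma_{0,n}^*(f)\equiv 1$, recognizes this sum as the naive height of $\tilde\Lambda_n(f)\in\bP^{d_n}(k)$, which in the function-field setting is computed by
\[
\sum_{v\in M_k}L_n(f,1)_v=\frac{1}{nd_n}\,h_{\bP^{d_n},k}\bigl(\tilde\Lambda_n(f)\bigr)=\frac{1}{nd_n}\max_{0\leq j\leq d_n}\deg\bigl(\sigma_{j,n}^*(f)\bigr).
\]
On the other hand, Lemma~\ref{critheight-Lyap} applied with $k'=k$ yields $\sum_{v\in M_k}L(f)_v=h_{\Crit,k}(f)$.

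Next, apply Theorem~\ref{tm:approx1} at each place with $r=1$ and sum the resulting inequalities. The triangle inequality gives
\[
\Bigl|\sum_{v}L_n(f,1)_v-\sum_{v}L(f)_v\Bigr|\leq\frac{8(d-1)^2\sigma_2(n)}{d_n}\sum_{v\in M_k}\bigl(|L(f)_v|+\log M(f)_v+\sup_{\sP^1(\bC_v)}|g_{f,v}|\bigr),
\]
and Lemma~\ref{lm:absolutevalues-Lyap} with $C(k)=0$ bounds the three inner sums by $(4d+2)h_{d,k}(f)$, $8d\,h_{d,k}(f)$ and $\frac{2d-1}{d-1}h_{d,k}(f)$, summing to $\frac{12d^2-8d-3}{d-1}h_{d,k}(f)$. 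Putting everything together yields the intermediate estimate $8(d-1)(12d^2-8d-3)\sigma_2(n)h_{d,k}(f)/d_n$. The stated bound, with $d^n$ in the denominator and $8d$ in front, then follows from the elementary inequality $(d-1)d^n\leq d\,d_n$, which one verifies directly from $d_n=\sum_{m\mid n}\mu(n/m)d^m$ (the subdominant terms being dominated by $d^{n/2+1}\leq d^{n-1}$ for $n\geq 2$, and the cases $n\leq 3$ being checked by hand).

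There is essentially no conceptual obstacle here beyond careful bookkeeping, since the global Proposition is obtained by summing the local quantitative Theorem~\ref{tm:approx1} over places. The only small trick is taking $r=1$: this simultaneously identifies $\sum_v L_n(f,1)_v$ with $(nd_n)^{-1}$ times the naive height of $\tilde\Lambda_n(f)$ and eliminates the $|\log r|$ error, and it is available precisely because $k=\bC(X)$ has residual characteristic zero at every place, so $\epsilon_{d^n,v}=1$ uniformly in $v$ and $n$.
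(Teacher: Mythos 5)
Your proposal is correct and follows essentially the same route as the paper: apply Theorem~\ref{tm:approx1} with $r=1$ at each (non-archimedean, residual characteristic zero) place, identify $\sum_v L_n(f,1)_v$ with $(nd_n)^{-1}h_{\bP^{d_n},k}(\tilde\Lambda_n(f))$ via \eqref{eq:heighttruncate}, use Lemma~\ref{critheight-Lyap} for the critical height and Lemma~\ref{lm:absolutevalues-Lyap} (with $C(k)=0$) for the error, and finish with $d_n\geq(1-d^{-1})d^n$; the arithmetic of the constants matches the stated bound exactly. The only cosmetic slips are that $d_n=\sum_{m\mid n}\mu(n/m)(d^m+1)$ (the $+1$ matters only for $n=1$, where the inequality is trivial) and that the domination $d^{n/2+1}\leq d^{n-1}$ requires $n\geq4$ rather than $n\geq2$, which your hand-check of small $n$ already covers.
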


\begin{proof}
Let us set
\[E_n:=\frac{1}{nd_n}\max_{0\leq j\leq d_n}\{\deg(\sigma^*_{j,n}(f))\}-h_{\Crit,k}(f).\]
Recall that $h_{\Crit,k}(f)=\sum_{v\in M_k}L(f)_v$
by Lemma~\ref{critheight-Lyap} adn that
\[\max_{0\leq j\leq d_n}\deg(\sigma^*_{j,n}(f))=h_{\bP^{d_n},k}([\sigma^*_{0,n}(f):\cdots:\sigma^*_{d_n,n}(f)]).\]
Applying Theorem~\ref{tm:approx1} at each $v\in M_k$ and using the equality \eqref{eq:heighttruncate}, we get
\begin{align*}
\left|E_n\right|&=\left|\frac{1}{nd_n}h_{\bP^{d_n},k}([\sigma^*_{0,n}(f):\cdots:\sigma^*_{d_n,n}(f)])
-\sum_{v\in M_k}L(f)_v\right|\\
&=\left|\sum_{v\in M_k}\frac{1}{nd_n}
\log\max_{0\leq j\leq d_n}\{|\sigma^*_{j,n}(f)|_v\}
-\sum_{v\in M_k}L(f)_v\right|\\
&=\left|\sum_{v\in M_k}(L_{n}(f,1)_v-L(f)_v)\right|\\ & \leq 8(d-1)^2\sum_{v\in M_k}\biggl(|L(f)_v|+\log M(f)_v
+\sup_{\sP^1(\bC_v)}|g_{f,v}|\biggr)\frac{\sigma_2(n)}{d_n}.
\end{align*} 
By Lemma~\ref{lm:absolutevalues-Lyap} and $d_n=d^n-\sum_{k|n,k<n}d_k\geq(1-d^{-1})d^n$, this completes the proof.
\end{proof}

\begin{corollary}\label{cor:degcrit}
For every $f\in\Rat_d(k)$,
\begin{align*}
\liminf_{n\to\infty}\frac{1}{n}
\max\biggl\{\frac{\deg ((f^n)'(z))}{[k((f^n)'(z)):k]}: \, z\in\Fix^*(f^n)\biggr\}
\geq h_{\Crit,k}(f).
\end{align*}
\end{corollary}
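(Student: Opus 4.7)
The plan is to leverage Proposition~\ref{prop:fnctfield}, which asserts that
\[
\lim_{n\to\infty}\frac{1}{nd_n}\max_{0\leq j\leq d_n}\deg(\sigma^*_{j,n}(f))=h_{\Crit,k}(f),
\]
and to compare the left-hand side with the quantity $\max_{z}\deg((f^n)'(z))/[k((f^n)'(z)):k]$ via an elementary upper bound. Concretely, I would aim to prove that
\[
\max_{0\leq j\leq d_n}\deg(\sigma^*_{j,n}(f))\leq d_n\cdot\max_{z\in\Fix^*(f^n)}\frac{\deg((f^n)'(z))}{[k((f^n)'(z)):k]}.
\]
Once this is established, dividing by $nd_n$, taking the $\liminf$ as $n\to\infty$ and invoking the proposition immediately yields the corollary.

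To prove the bound, I would first use the convention $\sigma^*_{0,n}(f)\equiv 1$ to see that $[\sigma^*_{0,n}(f):\cdots:\sigma^*_{d_n,n}(f)]\in\bP^{d_n}(k)$ is written without common pole, so
\[
\max_{0\leq j\leq d_n}\deg(\sigma^*_{j,n}(f))=h_{\bP^{d_n},k}([\sigma^*_{0,n}(f):\cdots:\sigma^*_{d_n,n}(f)])=\sum_{v\in M_k}\log\max_{0\leq j\leq d_n}|\sigma^*_{j,n}(f)|_v.
\]
Every place of $k=\bC(X)$ being non-archimedean, Gauss's lemma applied to the monic polynomial $p_{d,n}(f,T)^n=\prod_{z\in\Fix^*(f^n)}((f^n)'(z)-T)$ then gives, for each $v\in M_k$ (after fixing an embedding $\overline{k}\hookrightarrow\bC_v$),
\[
\log\max_{0\leq j\leq d_n}|\sigma^*_{j,n}(f)|_v=\sum_{z\in\Fix^*(f^n)}\log\max\{1,|(f^n)'(z)|_v\}.
\]

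At this point I would swap the order of summation and decompose $\Fix^*(f^n)$ into $\mathrm{Gal}(\bar k/k)$-orbits. For each orbit $O$ with representative $z_0$, common multiplicity $\mu(O)$, and associated multiplier $\alpha_O=(f^n)'(z_0)$, the key observation is that as $z$ ranges over $O$ the value $(f^n)'(z)$ runs through the Galois orbit of $\alpha_O$ with repetition factor $|O|/[k(\alpha_O):k]$ (given by the index of the stabilizer of $z_0$ in that of $\alpha_O$). Combined with the standard identity $\sum_{v\in M_k}\sum_{\alpha'\;\text{Gal.\ conj.\ of }\alpha_O}\log\max\{1,|\alpha'|_v\}=[k(\alpha_O):k]\cdot h_{\bP^1,k}(\alpha_O)$, this yields
\[
\sum_{v\in M_k}\sum_{z\in O}\log\max\{1,|(f^n)'(z)|_v\}=|O|\cdot h_{\bP^1,k}(\alpha_O).
\]
Summing over all orbits, using that $\sum_O\mu(O)|O|=d_n$, and bounding the average by the maximum gives the desired inequality, since in the function-field setting $h_{\bP^1,k}(\alpha)=\deg(\alpha)/[k(\alpha):k]$.

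The main technical subtlety will lie in the Galois-orbit bookkeeping: the map $z\mapsto(f^n)'(z)$ from $\Fix^*(f^n)$ to $\overline{k}$ is Galois-equivariant but need not be injective on orbits, so one must carefully track the multiplicity with which each Galois conjugate of $\alpha_O$ is realized, via the index $[\mathrm{Stab}(\alpha_O):\mathrm{Stab}(z_0)]$. Once this accounting is in place, the remaining manipulations are standard consequences of the product formula and of the definition of the naive height on $\bP^1(\bar k)$.
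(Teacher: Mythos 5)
Your proposal is correct and follows essentially the same route as the paper: both pass through Proposition~\ref{prop:fnctfield}, identify $\max_j\deg(\sigma^*_{j,n}(f))$ with the naive height of $[\sigma^*_{0,n}(f):\cdots:\sigma^*_{d_n,n}(f)]$, use Gauss's lemma (i.e.\ \eqref{eq:heighttruncate} with $r=1$) to rewrite each local term as $\sum_{z\in\Fix^*(f^n)}\log\max\{1,|(f^n)'(z)|_v\}$, swap the sums to recognize the heights of the multipliers, and bound the resulting average by the maximum over $\Fix^*(f^n)$. The only difference is that you spell out the Galois-orbit bookkeeping that the paper compresses into the single inequality $\sum_v L_n(f,1)_v\le\frac{1}{nd_n}\sum_{z}\deg((f^n)'(z))/[k((f^n)'(z)):k]$.
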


\begin{proof}
For every $n\in\bN^*$, 
by Proposition~\ref{prop:fnctfield} and the definition of $L_{n,v}(f,1)$, we have
\begin{align*}
h_{\Crit,k}(f)+o(1)
& \le\frac{1}{nd_n}\max_{0\leq j\leq d_n}\{\deg(\sigma_{j,n}^*(f))\}=\sum_{v\in M_k}L_{n}(f,1)_v\\
&\leq\frac{1}{nd_n}\sum_{z\in\Fix^{*}(f^n)}\frac{\deg((f^n)'(z))}{[k((f^n)'(z)):k]}\\
&\le\frac{1}{nd_n}\cdot d_n\cdot\max\biggl\{\frac{\deg ((f^n)'(z))}{[k((f^n)'(z)):k]}: \, z\in\Fix^*(f^n)\biggr\},
\end{align*}
since $\Fix^{*}(f^n)$ contains exactly $d_n$ points. Together with the fact that
$\sigma_2(n)=o(d^n)$ and $d_n\sim d^n$, it is sufficient to make $n\to\infty$ to end the proof.
\end{proof}

\begin{example}
Let $f(z):=z^d+t\in\Rat_d(\bC(t))$, so that
$h_{d,\bC(t)}(f)=1$ and $\mathrm{Crit}(f)=\{0,\infty\}$. 
Since the multiplicity of $\infty$ as a critical point of $f$ equals
$d-1$ and $f(\infty)=\infty$, we have $\deg(f^n(\infty))=0$ for every $n\in\bN^*$
so that $\hat{h}_{f,\bC(t)}(\infty)=0$. 
On the other hand, for every $n\in\bN^*$, we have $\deg(f^n(0))=d^{n-1}$. Hence
\[
h_{\Crit,\bC(t)}(f)=(d-1)\hat{h}_{f,\bC(t)}(\infty)+
(d-1)\hat{h}_{f,\bC(t)}(0)=(d-1)\lim_{n\rightarrow\infty}\frac{\deg(f^n(0))}{d^n}=\frac{d-1}{d}.
\]
On the other hand, by Eremenko-Levin~\cite[Theorem 1.6]{eremenko-levin}, for every $n\geq1$, 
\begin{gather*}
 \min_{z_t\in\Fix^*(f_t^n)}\frac{1}{n}\log|(f_t^n)'(z_t)|=\frac{d-1}{d}\log|t|+o(1)
\quad\text{as }t\to\infty, 
\end{gather*}
which directly implies that for every $z\in\Fix^*(f^n)$,
\begin{gather*}
 \frac{1}{n}\frac{\deg((f^n)'(z))}{[k((f^n)'(z)):k]}=\frac{d-1}{d}.
\end{gather*}
In particular, the inequality in Corollary~\ref{cor:degcrit} is sharp in that
for any integer $d>1$, 
there exists $f\in\Rat_d(\mathbb{C}(t))$ such that
\begin{enumerate}
\item $h_{d,\bC(t)}(f)>0$ and $h_{\Crit,\mathbb{C}(t)}(f)>0$, and 
\item for every integer $n\geq1$ and every $z\in\Fix^*(f^n)$,
\[
\frac{1}{n}\frac{\deg((f^n)'(z))}{[k((f^n)'(z)):k]}=h_{\Crit,\bC(t)}(f).
\]
\end{enumerate}
\end{example}

\subsection{Proof of Theorem~\ref{mainthm:functionfield}}

The following is our main application of Theorem~\ref{tm:approx} over function fields. The equivalence between the first three points and the final one is new, and we deduce it from the other entries and Proposition~\ref{prop:fnctfield}.

\begin{theorem}\label{tm:9equivalent}
Let $X$ be an irreducible smooth projective curve defined over $\bC$ and let $k=\bC(X)$. Fix an integer $d>1$, let $f\in\Rat_d(k)$ and $S\subset M_k$ be the finite set of places of bad reduction of $f$.

If $f$ is not affine, the following assertions are equivalent:
\begin{enumerate}
\item $\max_{1\leq j\leq d_n}\deg(\sigma_{j,n}^*(f))=o(nd^n)$ as $n\rightarrow\infty$,
\item $\max_{1\leq j\leq d_n}\deg(\sigma_{j,n}^*(f))=O(n\sigma_2(n))$ as $n\rightarrow\infty$,
\item $\liminf_{n\to\infty}\frac{1}{n}\max\{[k((f^n)'(z)):k]^{-1}\deg((f^n)'(z)): z\in \Fix^*(f^n)\}=0$,
\item $\deg((f^n)'(z))=0$ for all $z\in \Fix^*(f^n)$ and all $n\in\bN^*$,
\item $h_{\Crit,k}(f)=0$,
\item for every $c\in\mathrm{Crit}(f)$, $\deg(f^n(c))=O(1)$ as $n\rightarrow\infty$,
\item $f$ is $J$-stable as a holomorphic family parametrized by $X\setminus S$,
\item $f$ is isotrivial.
\end{enumerate}
\end{theorem}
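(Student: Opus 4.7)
My plan is to treat the classical equivalences (v)$\Leftrightarrow$(vi)$\Leftrightarrow$(vii)$\Leftrightarrow$(viii) as an input (these are due to Benedetto, McMullen, Baker--DeMarco, DeMarco in the function-field setting, under the non-affine hypothesis), and to use Proposition~\ref{prop:fnctfield} and Corollary~\ref{cor:degcrit} to plug the new conditions (i)--(iv) into this chain. Concretely, I intend to establish the loop
\[
\text{(viii)} \Rightarrow \text{(iv)} \Rightarrow \text{(iii)} \Rightarrow \text{(v)} \Rightarrow \text{(ii)} \Rightarrow \text{(i)} \Rightarrow \text{(v)} \Rightarrow \text{(vi)} \Rightarrow \text{(vii)} \Rightarrow \text{(viii)},
\]
the last four arrows being the classical ones.

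The new implications all come out of the approximation machinery. First, (ii)$\Rightarrow$(i) is immediate because $\sigma_2(n)=O(n^2\log\log n)$, so $n\sigma_2(n)=o(nd^n)$. Next, (i)$\Rightarrow$(v) follows from Proposition~\ref{prop:fnctfield}: the inequality
\[
\left|\tfrac{1}{nd_n}\max_{0\le j\le d_n}\deg(\sigma_{j,n}^*(f))-h_{\operatorname{crit},k}(f)\right|\le 8d(12d^2-8d-3)\,\tfrac{\sigma_2(n)}{d^n}\,h_{d,k}(f)
\]
has right-hand side tending to $0$ as $n\to\infty$, while the left-hand first term tends to $0$ under (i) (using $d_n\sim d^n$). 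Conversely, the very same inequality gives (v)$\Rightarrow$(ii) since $h_{\operatorname{crit},k}(f)=0$ then forces $\max_j\deg(\sigma_{j,n}^*(f))=O(n\sigma_2(n))$. For (iv)$\Rightarrow$(iii) nothing is to prove, and (iii)$\Rightarrow$(v) is Corollary~\ref{cor:degcrit} combined with $h_{\operatorname{crit},k}(f)\ge 0$: if the liminf is $0$, then $h_{\operatorname{crit},k}(f)\le 0$, hence equals $0$. Finally, (viii)$\Rightarrow$(iv) is direct: if $M\in\PGL(2)(k')$ conjugates $f$ to some $g\in\Rat_d(\bC)$, then every multiplier $(f^n)'(z)$ with $z\in\Fix^*(f^n)$ equals a multiplier of $g^n$ at the corresponding periodic point, hence lies in $\bC\subset\bar k$, and therefore has degree $0$.

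For the classical arrows, (v)$\Leftrightarrow$(vi) rests on the fact that over a function field (and for non-isotrivial $f$) the vanishing $\hat h_{f,k'}(c)=0$ is equivalent to $\deg(f^n(c))=O(1)$, i.e.\ to $c$ being preperiodic; summing over $c\in\operatorname{Crit}(f)$ gives the equivalence. The chain (vi)$\Rightarrow$(vii)$\Rightarrow$(viii) is the McMullen/DeMarco dichotomy: a holomorphic family on $X\setminus S$ for which every critical point is passive is $J$-stable, and a $J$-stable non-affine family over $\mathbb{C}(X)$ must be isotrivial; the converse (viii)$\Rightarrow$(vi) is obvious. It is precisely here that the non-affine hypothesis is used, since flexible Latt\`es families produce non-isotrivial examples with preperiodic critical orbits.

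The main obstacle, as is often the case with this type of equivalence, is not any single implication but rather marshalling the classical function-field results (Benedetto's Northcott-type criterion for canonical heights, and the DeMarco--McMullen characterization of $J$-stability via passive critical points) in a form compatible with our setting; the quantitative content genuinely contributed by this paper is the three new implications (i)$\Leftrightarrow$(ii)$\Leftrightarrow$(v) together with (iii)$\Rightarrow$(v), which are precisely what Proposition~\ref{prop:fnctfield} and Corollary~\ref{cor:degcrit} are tailored to give.
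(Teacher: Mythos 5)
Your proposal is correct and follows essentially the same route as the paper: Proposition~\ref{prop:fnctfield} for the equivalence of (i), (ii), (v); Corollary~\ref{cor:degcrit} for (iii)$\Rightarrow$(v); the trivial (iv)$\Rightarrow$(iii); the direct conjugation argument for (viii)$\Rightarrow$(iv); and the DeMarco/MSS/McMullen results (with the non-affine hypothesis entering only in (vii)$\Rightarrow$(viii)) for the classical part. The only cosmetic difference is that you organize the implications as one explicit cycle, whereas the paper states them as a collection of equivalences; the content is identical.
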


\begin{proof}
By Proposition~\ref{prop:fnctfield}, the assertions (i), (ii) and (v) are equivalent.  Moreover, items (v) and (vi) are equivalent by definition of the critical height.
Note that (iv) implies (iii) trivially and that (iii) implies (v) 
by Corollary~\ref{cor:degcrit}.
Now, if (viii) holds,
there exists a finite branched cover $Y\to X$ such that $k':=\bC(Y)$ is a finite extension of $k$ and
$M\in \mathrm{SL}_2(k')$ such that 
all the specializations $(M\circ f\circ M^{-1})_t\in\Rat_d(\bC)$ are independent of
$t\in Y$. In other words, $ f= M^{-1}\circ g \circ M$ where $g  \in \Rat_d(k')$ has coefficients in $\bC$. 
 In particular, for every $n\in\bN^*$, every $z\in \Fix^*(f^n)$ can be written as $M^{-1} y$ where $y\in \Fix^*(g^n)$, thus, by the chain rule, $(f^n)'(z)=  (g^n)'(y) \in\bC$, whence $\deg((f^n)'(z))=0$ for all $z\in\Fix^*(f^n)$, which is item (iv).

By e.g.~\cite[Theorem~1.4]{DeMarco-stable} item (v) is equivalent to the passivity of all critical points on $X\setminus S$, i.e. to (vii) by~\cite{MSS}. Finally, since we assumed that $f$ is not affine (vii) implies (viii) by Lemma 2.1 and Theorem 2.2 of \cite{McMullen4}.
\end{proof}

To conclude this section, we prove Theorem~\ref{mainthm:functionfield}.

\begin{proof}[Proof of Theorem~\ref{mainthm:functionfield}]
Note first that
\[h_{\bP^{d_n},k}\left([\sigma_{0,n}^*(f):\cdots:\sigma_{d_n,n}^*(f)]\right)=\max_{0\leq j\leq d_n}\deg\left(\sigma_{j,n}^*(f)\right),\]
so that Theorem~\ref{mainthm:functionfield} follows immediately from Theorem~\ref{tm:9equivalent}, once we check that a map $f$ which is affine satisfies assertions $(i)-(vii)$ of the above Theorem.

Pick $f:\mathbb{P}^1_k\rightarrow\mathbb{P}^1_k$ which is affine. Then there exists an elliptic curve $E$ defined over $k$, an isogeny $\phi:E\rightarrow E$ given by $\phi(P)=[m](P+P_0)$ for some $P_0\in E$ and some $m\in\mathbb{Z}$ with $m^2=d$ and a double branched cover $\pi:E\to\bP^1_k$ such that $\pi\circ \phi=f\circ\pi$ on $E$.
Then, for all $n\in\mathbb{N}^*$,$(f^n)'(z)=m^n$ for all periodic point $z\in\Fix^*(f^n)$, whence that assertions (i)--(iv) are satisfied. 

Finally, for all $c\in\mathrm{Crit}(f)$, the orbit of $c$ is finite whence (v)--(vii) hold trivially.
\end{proof}

\subsection{The case of function fields of higher dimensional varieties}\label{sec:higherdim}
Assume that $X$ is a normal irreducible projective variety defined over $\bC$ of any dimension and let $k:=\bC(X)$ be its function field. 
The results exposed in the present section can be adapted to this context. Let us explain how following the exposition of~\cite[Chapter 1]{BombieriGubler}.

\medskip

First, pick an ample line bundle $L$ on $X$ and denote by $\deg(Z)$ the degree of a cycle $Z$ with respect to $L$. For any $g\in k^\times$, let $\mathrm{ord}_Z(g)$ be the order of $g$ at $Z$ and let 
\[|g|_Z:=e^{-\deg(Z)\mathrm{ord}_Z(g)}, \ g\in \bC(X)^\times.\]
Note that for two distinct prime divisors $Z$ and $Z'$, the absolute values $|\cdot|_Z$ and $|\cdot|_{Z'}$ are non-trivial and not equivalent.
In addition, since the degree of a principal divisor is $0$, we have $\sum_{Z}\deg(Z)\mathrm{ord}_Z(g)=0$ for all $g\in k^\times$, where the sum ranges over all prime divisors of $X$, whence the sum is actually finite.
The field $k$ equipped with $(|\cdot|_Z)_Z$ and and $N_Z=1$ for all $Z$ is thus a global field.
 Equivalently, we have the product formula
\[\prod_{Z}|g|_Z=1, \ g\in k^\times.\]
We then may define the naive height function $h_{\bP^N,k}$ on $\bP^N(k)$ by
\[h_{\bP^N,k}(x):=-\sum_{Z}\deg(Z)\min_{0\leq j\leq N}\mathrm{ord}_Z(x_j)\]
if $x=[x_0:\cdots:x_N]\in \bP^N(k)$.
In particular, if  $x\in k^\times$ is a rational function on $X$, then
\[h_{\bP^1,k}(x)=h_{\bP^1,k}([x:1])=-\sum_{Z}\deg(Z)\min\big(0,\mathrm{ord}_Z(x)\big).\]
It thus can be computed as the degree of the divisor $x^{-1}\{\infty\}$. 
In particular, $h_{\mathbb{P}^1,k}(x)=0$ if and only if $x$ is constant.

~

Let us come back to our dynamical setting: the adaptation of Theorem~\ref{tm:9equivalent} remains valid in the present context, since we did not use any where that $X$ has dimension $1$ in the proof.

In particular, we have the

\begin{theorem}
let $X$ be a normal irreducible projective variety defined over $\bC$ of any dimension and let $k:=\bC(X)$ be its function field. Pick a rational map $f\in\mathrm{Rat}_d(k)$ of degree $d>1$ which is not affine. The following assertions are equivalent:
\begin{enumerate}
\item $h_{\bP^{d_n},k}([1,\sigma_{1,n}^*(f):\cdots:\sigma_{d_n,n}^*(f)])=o(nd^n)$ as $n\to\infty$,
\item $h_{\bP^{d_n},k}([1,\sigma_{1,n}^*(f):\cdots:\sigma_{d_n,n}^*(f)])=O(n\sigma_2(n))$ as $n\to\infty$,
\item $h_{\mathrm{crit},k}(f)=0$,
\item $f$ is isotrivial.
\end{enumerate}
\end{theorem}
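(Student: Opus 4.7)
The plan is first to observe that $(\mathrm{i})\Leftrightarrow(\mathrm{ii})\Leftrightarrow(\mathrm{iii})$ is not specific to the one-dimensional setting. The proof of Proposition~\ref{prop:fnctfield} uses only the non-archimedean quantitative approximation Theorem~\ref{tm:approx1} applied place by place, the global estimates in Lemma~\ref{lm:absolutevalues-Lyap}, and the product formula---none of which require $\dim X=1$---so it extends verbatim to yield
\[
\left|\frac{1}{nd_n}h_{\bP^{d_n},k}\bigl([\sigma^*_{0,n}(f):\cdots:\sigma^*_{d_n,n}(f)]\bigr)-h_{\Crit,k}(f)\right|\le C\,\frac{\sigma_2(n)}{d^n}\,h_{d,k}(f).
\]
Combined with $\sigma_2(n)=O(n^2\log\log n)=o(d^n)$, this gives the equivalence of $(\mathrm{i})$, $(\mathrm{ii})$ and $(\mathrm{iii})$. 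The implication $(\mathrm{iv})\Rightarrow(\mathrm{iii})$ is also direct: if $f$ is isotrivial then, after pulling back along a finite branched cover $Y\to X$ and conjugating by a M\"obius transformation $M$ defined over $\bC(Y)$, $M\circ f\circ M^{-1}$ acquires coefficients in $\bC$, so its critical points lie in $\bP^1(\bC)$ and have vanishing canonical height relative to $\bC(Y)$; hence $h_{\Crit,k}(f)=0$.

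The genuine content is therefore $(\mathrm{iii})\Rightarrow(\mathrm{iv})$, and my plan is to reduce to the one-dimensional Theorem~\ref{tm:9equivalent} by cutting $X$ with generic complete intersection curves. Fix a very ample line bundle on $X$; iterated Bertini produces, through any prescribed point of $X$, a smooth irreducible curve $C\subset X$ obtained as a complete intersection of general hyperplane sections, and such $C$ sweep out $X$. For generic $C$ the restriction $f|_C\in\Rat_d(\bC(C))$ is well-defined, and the naive height on $\bP^1(\bC(C))$ is controlled by the restriction of the naive height on $\bP^1(k)$ (up to a multiplicative constant depending on $C$). Consequently $\hat h_{f|_C,\bC(C)}(c|_C)=0$ for every critical point $c$ of $f$, so $h_{\Crit,\bC(C)}(f|_C)=0$, and Theorem~\ref{tm:9equivalent} forces $f|_C$ to be either isotrivial or affine. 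The ``affine'' alternative would make every specialization $f_t\in\Rat_d(\bC)$ (with $t\in C$) a flexible Latt\`es map; since such $C$ cover $X$, this would imply that $f_t$ is Latt\`es for every $t\in X$, contradicting the standing hypothesis that $f$ is not affine. Hence $f|_C$ is isotrivial for generic $C$.

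The final step upgrades ``isotrivial along a generic family of curves'' to ``globally isotrivial'', and is the main obstacle of the argument. Isotriviality of $f|_C$ means that the classifying rational map $\Phi\colon X\dashrightarrow \mathcal{M}_d(\bC)$ given by $t\mapsto [f_t]$ is constant along $C$. Any two general complete intersection curves $C,C'\subset X$ meet when $\dim X\ge 2$ (their numerical classes pair positively; the case $\dim X=1$ is already covered by Theorem~\ref{tm:9equivalent}), so the constants coincide, and by letting $C$ vary through any point of $X$ we conclude that $\Phi$ is constant. Constancy of $\Phi$ pins down a single moduli class $[g]\in\mathcal{M}_d(\bC)$ represented by every specialization of $f$; to recover the M\"obius conjugation globally one passes to a suitable finite branched cover $Y\to X$ on which a section of the $\PGL_2$-torsor above the constant moduli point becomes defined, yielding $(\mathrm{iv})$. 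This descent across the $\PGL_2$-torsor is the delicate point to handle carefully.
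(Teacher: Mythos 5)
Your treatment of $(\mathrm{i})\Leftrightarrow(\mathrm{ii})\Leftrightarrow(\mathrm{iii})$ and of $(\mathrm{iv})\Rightarrow(\mathrm{iii})$ matches the paper. For $(\mathrm{iii})\Rightarrow(\mathrm{iv})$, however, you take a much longer route than the paper and leave the decisive step unproved. The paper's proof is simply the observation that the whole chain of Theorem~\ref{tm:9equivalent} --- $h_{\Crit,k}(f)=0$ implies every critical orbit has bounded degree, hence every critical point is passive on $X\setminus S$, hence the family is $J$-stable by \cite{MSS}, hence (since $f$ is not affine) isotrivial by McMullen's rigidity theorem \cite[Lemma~2.1, Theorem~2.2]{McMullen4} --- nowhere uses $\dim X=1$: the $\lambda$-lemma and McMullen's theorem are valid for families parametrized by an arbitrary connected complex variety. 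So no reduction to curves is needed.

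Your alternative reduction has a genuine gap precisely where you flag it: you never carry out the descent from ``$\Phi\colon X\dashrightarrow\mathcal{M}_d$ is constant'' to the existence of $M\in\PGL(2,\bC(Y))$ conjugating $f$ into $\Rat_d(\bC)$. Announcing that a section of the $\PGL_2$-torsor exists after a finite cover is not a proof; one must use that the fibre of $\Rat_d\to\mathcal{M}_d$ over the constant value $[g]$ is a single orbit isomorphic to $\PGL_2/\mathrm{Aut}(g)$ with $\mathrm{Aut}(g)$ finite, so that the classifying map lifts to $\PGL_2$ after the finite base change $\PGL_2\to\PGL_2/\mathrm{Aut}(g)$. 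Two further points need repair. First, your dismissal of the affine alternative along a curve $C$ is too quick: having every specialization $f_t$, $t\in X$, of Latt\`es type does not by itself contradict ``$f$ is not affine,'' since the family could be an isotrivial Latt\`es family (the paper's ``equivalent'' reformulation of affine is loose on exactly this point); you must split into the isotrivial case (where you are done) and the non-isotrivial case (where descending the elliptic curve to $\bC(X)$ produces the contradiction). Second, the deduction $h_{\Crit,\bC(C)}(f|_C)=0$ from $h_{\Crit,k}(f)=0$ requires an actual comparison of canonical heights under restriction to a general complete intersection curve, including the passage to the finite extension of $k$ over which the critical points and their orbits are defined; asserting that the naive height is ``controlled up to a multiplicative constant'' is plausible but is not yet an argument.
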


\section{Multiplier maps and bifurcation currents}

\subsection{Symmetric powers of $\mathbb{P}^1$}
Pick an integer $N\geq2$. The symmetric group $\mathfrak{S}_N$ acts on 
$(\mathbb{P}^1)^N$ coordinate-wise: for each $\sigma\in\mathfrak{S}_N$, and each $(x_1,\ldots,x_N)$, we have
$\sigma\cdot (x_1,\ldots,x_N)=(x_{\sigma(1)},\ldots,x_{\sigma(N)})$. We denote by $S^{N}\mathbb{P}^1$ the $N$-th symmetric power of the projective line, i.e. the quotient of $(\mathbb{P}^1)^N$ by the action of the symmetric group $\mathfrak{S}_N$ by permutation. This quotient is a projective variety.

Denote by $P_N:(\mathbb{P}^1)^N\rightarrow S^N\mathbb{P}^1$ the quotient map and by $\pi_i:(\mathbb{P}^1)^N\rightarrow\mathbb{P}^1$ the projection onto the $i$-th coordinate.

Let $H_N$ be the divisor on $S^N\mathbb{P}^1$ such that $P_N^*H_N=\sum_{i=1}^N\pi_i^*\mathcal{O}_{\mathbb{P}^1}(1)$. This divisor is ample and we let $h_{S^N\mathbb{P}^1}$ be the associated height function on $(S^N\bP^1)(\bar{\bQ})$ (which is well-defined up to an additive constant), i.e. we let
\[h_{S^N\mathbb{P}^1,\mathbb{Q}}(P_N(x_1,\ldots,x_N)):=\sum_{j=1}^Nh_{\mathbb{P}^1,\bQ}(x_j),\]
for all $(x_1,\ldots,x_N)\in(\mathbb{P}^1(\bar{\mathbb{Q}}))^N$.
When working over the field $\mathbb{C}$, we also let $\omega_N$ be the positive smooth $(1,1)$-form representing $c_1(H_N)$ which satisfies
\[P_N^*\omega_N=\sum_{j=1}^N\pi_j^*(\lambda_{\mathbb{S}^1}),\]
where $\lambda_{\mathbb{S}^1}$ is the curvature form of the standard metrization of $\mathcal{O}_{\mathbb{P}^1}(1)$ associated with the height $h_{\mathbb{P}^1,\bQ}$, i.e. the Lebesgue measure on the unit circle normalized by $\lambda_{\mathbb{S}^1}(\bC)=1$.

We include a proof of the following for the sake of completeness.

\begin{lemma}
The variety $S^N\mathbb{P}^1$ is isomorphic to $\mathbb{P}^N$ and $H_N\simeq \mathcal{O}_{\mathbb{P}^N}(1)$. In particular,
\[h_{S^N\mathbb{P}^1,\mathbb{Q}}=h_{\mathbb{P}^N,\bQ}+O(1) \ \text{on} \ \mathbb{P}^N(\bar{\mathbb{Q}}).\]
\end{lemma}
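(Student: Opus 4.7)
The plan is to build the isomorphism $\Phi:S^N\mathbb{P}^1\to\mathbb{P}^N$ explicitly via elementary symmetric polynomials, check that it identifies $H_N$ with $\mathcal{O}_{\mathbb{P}^N}(1)$, and then deduce the height comparison from the functoriality of the Weil height machine.

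First, for each $(x_1,\ldots,x_N)\in(\mathbb{P}^1)^N$, choose homogeneous coordinates $x_i=[a_i:b_i]$ and define $\tilde e_k(a_1,b_1,\ldots,a_N,b_N)$ to be the coefficient of $X^{N-k}Y^k$ in the product $\prod_{i=1}^N(b_iX-a_iY)$. These $N+1$ polynomials are bihomogeneous of degree $(1,1,\ldots,1)$ in the pairs $(a_i,b_i)$ and are symmetric under $\mathfrak{S}_N$. Moreover, for any $(x_1,\ldots,x_N)$ the vector $(\tilde e_0,\ldots,\tilde e_N)$ is nonzero (some $b_i\ne0$, or one replaces some pair by $(a_i,b_i)=(1,0)$), so the assignment $\Phi\circ P_N:(x_1,\ldots,x_N)\mapsto[\tilde e_0:\cdots:\tilde e_N]$ is a well-defined morphism $(\mathbb{P}^1)^N\to\mathbb{P}^N$ that factors through $P_N$. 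I would next argue that the induced morphism $\Phi:S^N\mathbb{P}^1\to\mathbb{P}^N$ is bijective: a point $[c_0:\cdots:c_N]\in\mathbb{P}^N$ corresponds to the binary form $\sum c_kX^{N-k}Y^k$, and by the factorization of binary forms over an algebraically closed field (and more generally by considering the morphism on closed points over any algebraically closed field), the unordered list of its roots in $\mathbb{P}^1$ is well-defined and inverts $\Phi$. Since $S^N\mathbb{P}^1$ is normal (being a finite group quotient of a smooth variety) and $\mathbb{P}^N$ is smooth, Zariski's main theorem upgrades this bijection to an isomorphism.

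Next I would identify line bundles. By construction the components $\tilde e_k$ are bihomogeneous of degree $1$ in each pair $(a_i,b_i)$, so the pullback of a linear form on $\mathbb{P}^N$ becomes an element of $H^0\bigl((\mathbb{P}^1)^N,\bigotimes_i\pi_i^*\mathcal{O}_{\mathbb{P}^1}(1)\bigr)$. This yields
\[
P_N^*\Phi^*\mathcal{O}_{\mathbb{P}^N}(1)=\sum_{i=1}^N\pi_i^*\mathcal{O}_{\mathbb{P}^1}(1)=P_N^*H_N.
\]
Because $P_N$ is finite and surjective and $\mathrm{Pic}(S^N\mathbb{P}^1)=\mathrm{Pic}(\mathbb{P}^N)=\mathbb{Z}$ is torsion-free, this forces $\Phi^*\mathcal{O}_{\mathbb{P}^N}(1)\simeq H_N$.

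Finally, the comparison of heights is a direct application of the functoriality of the Weil height machine: the two height functions $h_{S^N\mathbb{P}^1,\mathbb{Q}}$ (built from $H_N$) and $h_{\mathbb{P}^N,\mathbb{Q}}\circ\Phi$ (built from $\mathcal{O}_{\mathbb{P}^N}(1)$) are attached to isomorphic line bundles on the projective variety $S^N\mathbb{P}^1\simeq\mathbb{P}^N$, hence agree up to a bounded function on $(S^N\mathbb{P}^1)(\overline{\mathbb{Q}})$. The main obstacle I expect is to keep the bookkeeping of homogeneous coordinates clean enough to verify that $\Phi\circ P_N$ really is a morphism on all of $(\mathbb{P}^1)^N$ (including points involving $\infty$); this is why I wrote $\tilde e_k$ in the bihomogeneous form using $b_iX-a_iY$ rather than using affine roots.
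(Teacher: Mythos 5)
Your construction is the same as the paper's: both build the isomorphism from the bihomogeneous elementary symmetric polynomials, i.e.\ the coefficients of the product of the linear forms attached to the $N$ points, and both finish the height comparison by functoriality of the Weil height machine. The two arguments diverge in the intermediate verifications, and in each case your route is valid. For the isomorphism itself, the paper simply records that $\tilde P_N$ has topological degree $N!$ with $\mathfrak{S}_N$-invariant fibers and declares that it descends to an isomorphism; your version --- bijectivity on closed points via unique factorization of binary forms, then Zariski's main theorem using normality of the finite quotient $S^N\bP^1$ --- fills in exactly the step the paper leaves implicit, and is arguably the more complete write-up. For the identification $H_N\simeq\mathcal{O}_{\bP^N}(1)$, the paper computes the top self-intersection $(H_N^N)=1$ by a Fubini/change-of-variables calculation with the forms $\omega_N$ and $\lambda_{\bS^1}$, and concludes because the ample cone of $\bP^N$ is a half-line; you instead observe that $P_N^*\Phi^*\mathcal{O}_{\bP^N}(1)=P_N^*H_N$ by the bidegree $(1,\ldots,1)$ of the coordinates and deduce $\Phi^*\mathcal{O}_{\bP^N}(1)\simeq H_N$ from injectivity of $P_N^*$ on the torsion-free Picard group (justified, e.g., by $P_{N*}P_N^*D=N!\cdot D$ on the smooth variety $S^N\bP^1\simeq\bP^N$). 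Your Picard-group argument is purely algebraic and avoids the analytic intersection computation, at the cost of invoking the norm/projection-formula fact for the finite cover; the paper's computation is self-contained but transcendental. Both are correct, and the height conclusion follows identically in either case.
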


\begin{proof}
For every $0\leq j\leq N$, set
\[
\eta_{j}([z_1:t_1],\ldots,[z_N:t_N]):=\sum_I\prod_{\ell=1}^Nz_\ell^{I_\ell} \cdot t_\ell^{1-I_\ell}\quad\text{on }(\bP^1)^N,
\]
where the sum is taken over all $N$-tuples $I\in\{0,1\}^N$
with $I_1+\cdots+I_N=N-j$. 
The map $\tilde{P}_N=[\eta_0,\ldots,\eta_N]:(\bP^1)^N\to \bP^N$ 
has the topological degree $N!$ and is surjective,
and each fiber of $\tilde{P}_N$ is invariant under the action of $\mathfrak{S}_N$. 
Hence $\tilde{P}_N$ descends as an isomorphism $S^N\mathbb{P}^1\to\mathbb{P}^N$.

To see that $H_N\simeq\mathcal{O}_{\mathbb{P} ^N}(1)$, it is sufficient to prove that $(H_N^N)=1$ since the ample cone of $\mathbb{P}^N$ is an open half line. By the change of variable formula and Fubini, we compute
\begin{align*}
(H_N^N) & =\int_{\mathbb{P}^N(\mathbb{C})}\omega_N^N=\int_{(\mathbb{P}^1(\mathbb{C}))^N}\frac{1}{N!}(P_N^*\omega_N)^N\\
& =\frac{1}{N!}\int_{(\mathbb{P}^1(\mathbb{C}))^N}\left(\sum_{j=1}^N\pi_j^*\lambda_{\mathbb{S}^1}\right)^N=\int_{(\mathbb{P}^1(\mathbb{C}))^N}\bigwedge_{j=1}^N\pi_j^*\lambda_{\mathbb{S}^1}\\
& = \prod_{j=1}^N\int_{\mathbb{P}^1(\mathbb{C})}\lambda_{\mathbb{S}^1}=1,
\end{align*}
as required. Finally, the comparison of height functions follows from fonctoriality properties of height functions~\cite{HS}.
\end{proof}

\begin{remark}\label{rem:affinecase}
Fix a field $K$. When $x_1,\ldots,x_N\in\mathbb{A}^1(K)$, it is easy to see that
\[P_N(x_1,\ldots,x_N)=[\sigma_N:\cdots:\sigma_1:1]\in \mathbb{A}^N(K),\]
where $\sigma_j$ is the degree $j$ elementary symmetric polynomial in $(x_1,\ldots,x_N)$ (recall that $\mathbb{A}^N(K)$ denotes the affine space).
\end{remark}

\subsection{Currents, DSH functions and the bifurcation currents}\label{sec:currentDSH}

We refer to~\cite[Appendix A]{dinhsibony2} for more details on currents and DSH functions. Pick any affine variety $X$ defined over $\bC$ and choose any embedding $\iota:X\rightarrow \mathbb{C}^N$. Let $D:=\mathcal{O}_{\mathbb{P}^N}(1)|_X$ and $\omega_D$ be the restriction of the ambient normalized Fubini-Study form to $X$. For any positive closed current $T$ of bidimension $(k,k)$ defined on $X$ and any Borel set $A \subset X$, we denote by $\|T\|_{A,D}$ the number
\[ 
\|T\|_{A,D} :=\int_A T \wedge \omega_D^k.
\]
This is the \emph{mass} of the current $T$ in $A$ relatively to $\omega_D$. 

Let $\Psi$ be a $(\ell,\ell)$-form in $X$. We say that $\Psi$ is DSH if we can write $dd^c\Psi=T^+-T^-$  where $T^\pm$ are positive closed currents of finite mass in $X$. We also set
\[
\|\Psi\|^*_{\mathrm{DSH},D}:=\inf\left(\|T^+\|_{X,D}+\|T^-\|_{X,D}\right),
\]
where the infimum is taken over all closed positive currents $T^\pm$ such that $dd^c\Psi=T^+-T^-$ (note that $\|T^+\|_{X,D}=\|T^-\|_{X,D}$ since they are cohomologuous).

This is not exactly the usual DSH norm but just a semi-norm. 
Nevertheless, one has 
$\|\Psi\|_{\mathrm{DSH}}^* \leq \|\Psi\|_{\mathrm{DSH}}$, where $\|\Psi\|_{\mathrm{DSH}}:=\|\Psi\|_{\mathrm{DSH}}^*+\|\Psi\|_{L^1}$. The interest of those DSH-norms lies in the fact that they behave nicely under change of coordinates. 
Furthermore, when $\Psi$ is $\mathcal{C}^2$ with support in a compact set $K$, there is a constant $C>0$ depending only on $K$ such that $\|\Psi\|_{\mathrm{DSH}}\leq C\|\Psi\|_{\mathcal{C}^2}$.

\medskip

For all $R>1$, we define the auxiliary function $\Psi_R:\mathbb{A}^N(\bC)\rightarrow\mathbb{R}$ by letting
\[\Psi_R(Z):=\frac{1}{\log R}\min\left\{\log\max\{\|Z\|,R\}-2\log R,0\right\}\]
for all $Z\in\mathbb{A}^N(\mathbb{C})$. For all $1\leq j\leq k:=\dim X$, define the $(k-j,k-j)$-DSH form on $X$
\[\Phi_{R,X,j}:=\Psi_R|_X\cdot\left(\omega_D\right)^{k-j}.\]

\begin{lemma}\label{lm:goodDSH}
The $(k-j,k-j)$-form $\Phi_{R,X,j}$ is DSH and continuous on $X$ with support in $\bar{\mathbb{B}}(0,R^2)\cap X$ and if $dd^c\Phi_{R,X,j}=T^+-T^-$, we have
\[\|T^{\pm}\|_{X,D}\leq\frac{\deg_D(X)}{\log R}.\]
\end{lemma}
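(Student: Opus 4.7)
The plan is to realize $\Psi_R$ as an explicit difference $\tilde v - \tilde v^+$ of two continuous PSH functions on $\bA^N(\bC)$, wedge with the closed form $\omega_D^{k-j}$ to obtain a DSH decomposition of $\Phi_{R,X,j}$, and then bound the masses of the positive pieces by computing a single intersection number on the projective closure of $X$ via Stokes' theorem.

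The first step is to check by cases that $\Psi_R$ equals $-1$ on $\{\|Z\|\leq R\}$, equals $(\log\|Z\|-2\log R)/\log R$ on $\{R\leq\|Z\|\leq R^2\}$, and vanishes on $\{\|Z\|\geq R^2\}$; this yields the continuity and support statements for $\Phi_{R,X,j}$. I then set
\[
\tilde v(Z) := \frac{\log\max\{\|Z\|,R\}}{\log R} - 2, \qquad \tilde v^+(Z) := \max\{\tilde v(Z),0\},
\]
both of which are continuous PSH on $\bA^N(\bC)$ (each is, up to constants, a maximum of PSH functions), and $\Psi_R = \tilde v - \tilde v^+$ on the nose. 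Restricting to $X$ and wedging with $\omega_D^{k-j}$ then gives a decomposition $dd^c\Phi_{R,X,j} = T^+ - T^-$, where
\[
T^+ := dd^c(\tilde v|_X)\wedge\omega_D^{k-j}, \qquad T^- := dd^c(\tilde v^+|_X)\wedge\omega_D^{k-j}
\]
are positive closed currents of bidimension $(j-1,j-1)$ on $X$.

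The two masses $\|T^\pm\|_{X,D}$ coincide since $T^+ - T^-$ is $dd^c$-exact and $\omega_D^{k-1}=\omega_D^{k-j}\wedge\omega_D^{j-1}$ is closed, so the problem reduces to the identity
\[
\int_X dd^c\log\max\{\|Z\|,R\}\wedge\omega_D^{k-1} = \deg_D(X),
\]
from which the bound $\|T^\pm\|_{X,D}\leq\deg_D(X)/\log R$ follows (with equality, in fact). To establish this identity I would decompose $\log\max\{\|Z\|,R\}=\tfrac{1}{2}\log(R^2+\|Z\|^2)+h_R$ with $|h_R|\leq\tfrac{1}{2}\log 2$ bounded, which a direct check in affine charts at infinity shows extends continuously to all of $\mathbb{P}^N$. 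The Poincar\'e--Lelong formula yields $dd^c\tfrac{1}{2}\log(R^2+\|Z\|^2)=\omega_{\mathrm{FS}}-[H_\infty]$ as currents on $\mathbb{P}^N$, where $H_\infty$ is the hyperplane at infinity, and pulling back to the projective closure $\bar X\subset\mathbb{P}^N$ and using Stokes' theorem to kill the $dd^c h_R$ contribution reduces the integral to $\int_{\bar X\cap H_\infty}\omega_{\mathrm{FS}}^{k-1}=\deg_D(X)$ by the hyperplane section formula.

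The main technical obstacle I foresee is rigorously carrying out the Stokes-type argument on $\bar X$ when $\bar X$ is singular, together with handling the log-singular contribution of the unbounded potential at $H_\infty$. This should be resolved either by pulling back everything to a resolution of singularities of $\bar X$ and invoking the standard intersection theory of positive closed currents with smooth forms, or by a direct Lelong-number-at-infinity argument for the PSH function $\log\max\{\|Z\|,R\}$ on $\bA^N(\bC)$, exploiting that its logarithmic growth rate at infinity is exactly $1$.
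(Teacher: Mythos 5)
Your proposal is correct and follows essentially the same route as the paper: write $\Psi_R$ as a difference of two continuous psh functions of logarithmic growth, wedge with the closed positive form $\omega_D^{k-j}$, and bound the mass of each positive piece by $\deg_D(X)/\log R$ via the degree of $X$ (the paper does this by a B\'ezout-type restriction inequality in $\bP^N$, you by a direct Poincar\'e--Lelong computation on $\overline{X}$ — the two are interchangeable here). Your version is in fact more explicit than the paper's, which leaves the decomposition $dd^c\Psi_R=S_+-S_-$ unspecified and even misprints the ambient mass as $1/R$ instead of $1/\log R$; the technical points you flag at the end (Stokes on a possibly singular $\overline{X}$, absence of mass at $H_\infty$) are standard and are glossed over in the original as well.
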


\begin{proof}
First, note that the function $\Psi_R$ is DSH and continuous on $X$ and satisfies
\[\left\{\begin{array}{ll}
\Psi_R([f])=-1 & \text{ if } \ \iota([f])\in\mathbb{B}(0,R),\\
-1\leq \Psi_R([f])\leq 0 & \text{ if } \ \iota([f])\in\mathbb{B}(0,R^2)\setminus\bar{\mathbb{B}}(0,R),\\
0 & \text{ if } \ \iota([f])\notin \bar{\mathbb{B}}(0,R^2).
\end{array}\right.\]
As $\omega_{D}$ is positive and closed form on $X$, this implies $\Phi_{R,X,j}$ is DSH continuous with support in $\bar{\mathbb{B}}(0,R^2)\cap X$. Moreover, if $dd^c\Phi_{R,X,j}=T_+-T_-$, then 
\[T_\pm=(S_\pm|_X)\wedge \omega_D^{k-j},\]
where $S_\pm$ are such that $dd^c\Psi_R=S_\pm$ and, using B\'ezout's theorem, we have
\[\|T_\pm\|_{X,D}=\|(S_\pm|_X)\wedge\omega_D^{k-j}\|_{X,D}\leq\|S_\pm\|\cdot \deg_D(X),\]
where the mass $\|S_\pm\|=\int S_\pm\wedge \omega_{\mathrm{FS},\bP^N}^{N-1}$ is computed with respect to the Fubini Study form of $\mathbb{P}^N$. By construction of $\Psi_R$, we immediately get
 \[\|S_\pm\|=\int_{\mathbb{A}^N(\mathbb{C})}S_\pm\wedge\omega_D^{N-1}=\frac{1}{R},\]
 as required.
\end{proof}

Since two conjugated rational maps have the same Lyapunov exponent, the Lyapunov function $f\mapsto L(f)$ on $\mathrm{Rat}_d(\mathbb{C})$
{descends to}
a continuous and psh function $L:\mathcal{M}_d\rightarrow\mathbb{R}$; for more details including the complex analytic properties of the quotient map $\Rat_d(\bC)\to\mathcal{M}_d(\bC)$, we refer to \cite[\S6]{BB1}.
\begin{definition}\label{mu_and_T}
For any integer $1\leq p\leq 2d-3$, 
the $p$-\emph{bifurcation current} on $\mathcal{M}_d$ is given by
\[T_{\mathrm{bif}}^p:=(dd^cL)^p,\]
 and the \emph{bifurcation measure} on $\mathcal{M}_d$ is by $\mu_{\mathrm{bif}}:=T_{\mathrm{bif}}^{2d-2}=(dd^cL)^{2d-2}$. Finally we simply let $T_{\mathrm{bif}}:=T_{\mathrm{bif}}^1$.
\end{definition}

The measure $\mu_\mathrm{bif}$ is a finite positive measure on $\mathcal{M}_d$ of
strictly positive total mass (see \cite{BB1}). We also mention some other remarkable facts below, which will not be used explicitly in this article.

~
 
By DeMarco \cite{DeMarco2}, the support of $T_{\mathrm{bif}}$ coincides with the bifurcation locus of the moduli space $\mathcal{M}_d(\mathbb{C})$.  {If $p>1$,} the current $T_{\mathrm{bif}}^p$ detects, in a certain sense, stronger bifurcations than $T_{\mathrm{bif}}$. 
Indeed, its topological support admits several dynamical characterizations 
{similar to} that of the bifurcation locus: for example, it is the closure of parameters admitting $p$ distinct neutral cycles or $p$ critical points preperiodic to repelling cycles (see~\cite{BB1,Dujardin2012,gauthier:Indiana}). For more precise (even quantitative) equidistribution results, see \cite{GOV}.

\subsection{Complex analytic properties of the multiplier maps}

We now pick an integer $n\geq1$.

\begin{definition}
The period $n$ \emph{multiplier map} $\Lambda_n:\mathrm{Rat}_d\rightarrow \mathbb{P}^{d^n+1}$ is defined by
\[\Lambda_n([f]):=P_{d^n+1}\left((f^n)'(z_1),\ldots,(f^n)'(z_{d^n+1})\right),\]
for any representative $f$ of $[f]$, where $z_1,\ldots,z_{d^n+1}$ is any ordering of $\mathrm{Fix}(f^n)$.

Similarly, the period $n$ \emph{formally multiplier map} $\tilde{\Lambda}_n:\mathrm{Rat}_d\rightarrow \mathbb{P}^{d_n}$ is defined by
\[\tilde{\Lambda}_n([f]):=P_{d_n}\left((f^n)'(z_1),\ldots,(f^n)'(z_{d_n})\right),\]
for any representative $f$ of $[f]$, where $z_1,\ldots,z_{d_n}$ is any ordering of $\mathrm{Fix}^{*}(f^n)$.
\end{definition}

As mentioned before, the maps $\Lambda_n$ and $\tilde\Lambda_n$ descend to maps on $\mathcal{M}_d$ that are defined over $\mathbb{Q}$ and we still denote them by $\Lambda_n$ and $\tilde\Lambda_n$ respectively. Remark~\ref{rem:affinecase} implies that for any field $K$, any $n\geq1$ and any $[f]\in\mathcal{M}_d(K)$, we have
\[\Lambda_n([f])=[\sigma_{d^n+1,n}([f]):\cdots:\sigma_{1,n}([f]):1]\in\mathbb{A}^{d^n+1}(K)\]
and
\[\tilde\Lambda_n([f])=[\sigma^*_{d_n,n}([f]):\cdots:\sigma^*_{1,n}([f]):1]\in\mathbb{A}^{d_n}(K).\]
In particular, the maps $\Lambda_n$ and $\tilde{\Lambda}_n$ actually define morphisms from $\mathcal{M}_d$ to the corresponding affine space.

Let us begin with a few lemmas using classical pluripotential theory. Similarly to $L$, for every $r > 0$, the function $L_n(\cdot,r)$ descends to a continuous and psh function on $\mathcal{M}_d(\bC)$.
\begin{lemma}\label{lm:Lambdan_and_Ln}
For any integer $n\geq1$, we have
\[\frac{1}{nd_n}\tilde{\Lambda}_n^*(\omega_{d_n})=dd^cL_n(\cdot,1) \ \text{and} \ \frac{1}{n(d^n+1)}\Lambda_n^*(\omega_{d^n+1})=\sum_{k|n}\frac{d_k}{d^n+1}dd^cL_k(\cdot,1).\]
In particular, for any $1\leq j\leq 2d-2$ and any $n\geq1$, we have
$(\Lambda_n^*\omega_{d^n+1})^j\geq (\tilde\Lambda_n^*\omega_{d_n})^j$.
\end{lemma}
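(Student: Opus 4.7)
The plan is to identify both sides of the first two identities as pullbacks of the natural psh potentials built out of the chordal multipliers, and then deduce the inequality from a wedge-product estimate for semipositive forms.

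First I would establish the identity $\tilde{\Lambda}_n^{*}\omega_{d_n}=nd_n\,dd^c L_n(\cdot,1)$ using that, on some dense Zariski open $U\subset\mathcal{M}_d(\bC)$ (possibly after passing to a finite \'etale cover $p:\tilde U\to U$), the $d_n$ points of $\mathrm{Fix}^{*}(f^n)$ can be enumerated as holomorphic sections $z_1,\ldots,z_{d_n}:\tilde U\to\bP^1$, giving a holomorphic ordered lift $\tilde{\Lambda}_n^{\mathrm{ord}}:\tilde U\to(\bP^1)^{d_n}$ for which $P_{d_n}\circ\tilde{\Lambda}_n^{\mathrm{ord}}=\tilde{\Lambda}_n\circ p$. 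Pulling back the defining relation $P_{d_n}^{*}\omega_{d_n}=\sum_{i=1}^{d_n}\pi_i^{*}\lambda_{\mathbb{S}^1}$ and using $(\pi_i\circ\tilde{\Lambda}_n^{\mathrm{ord}})^{*}\lambda_{\mathbb{S}^1}=dd^c\log\max\{1,|(f^n)'(z_i)|\}$ (since $\lambda_{\mathbb{S}^1}=dd^c\log^{+}|\cdot|$ on $\bP^1$) yields, on $\tilde U$,
\begin{equation*}
p^{*}\tilde{\Lambda}_n^{*}\omega_{d_n}=dd^c\sum_{i=1}^{d_n}\log\max\{1,|(f^n)'(z_i)|\}=p^{*}\bigl(nd_n\,dd^c L_n(\cdot,1)\bigr).
\end{equation*}
The identity then descends to $U$ and, since both sides are continuous $(1,1)$-currents on $\mathcal{M}_d(\bC)$ that agree on a dense open, it holds globally.

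Next I would treat $\Lambda_n^{*}\omega_{d^n+1}$ by exactly the same scheme, but now summing over $\mathrm{Fix}(f^n)$ instead of $\mathrm{Fix}^{*}(f^n)$. The main combinatorial point is the dynatomic decomposition $\mathrm{Fix}(f^n)=\bigsqcup_{k\mid n}\mathrm{Fix}^{*}(f^k)$ (as divisors, via the M\"obius formula $d^n+1=\sum_{k\mid n}d_k$) together with the chain rule identity $(f^n)'(z)=\bigl((f^k)'(z)\bigr)^{n/k}$ valid for any $z\in\mathrm{Fix}^{*}(f^k)$ with $k\mid n$. Combining these two facts gives
\begin{equation*}
\sum_{z\in\mathrm{Fix}(f^n)}\log\max\{1,|(f^n)'(z)|\}=\sum_{k\mid n}\frac{n}{k}\sum_{z\in\mathrm{Fix}^{*}(f^k)}\log\max\{1,|(f^k)'(z)|\}=n\sum_{k\mid n}d_k\,L_k(\cdot,1),
\end{equation*}
and applying $dd^c$ yields $\Lambda_n^{*}\omega_{d^n+1}=n\sum_{k\mid n}d_k\,dd^c L_k(\cdot,1)$, i.e.\ the second formula after dividing by $n(d^n+1)$.

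Finally, subtracting the two identities gives
\begin{equation*}
\Lambda_n^{*}\omega_{d^n+1}-\tilde{\Lambda}_n^{*}\omega_{d_n}=n\sum_{k\mid n,\,k<n}d_k\,dd^c L_k(\cdot,1),
\end{equation*}
which is a sum of semipositive smooth $(1,1)$-forms (since each $L_k(\cdot,1)$ is continuous psh as a finite sum of functions of the form $\log^{+}|\text{holomorphic}|$, pulled back via $\Lambda_k$). Hence $\Lambda_n^{*}\omega_{d^n+1}\geq\tilde{\Lambda}_n^{*}\omega_{d_n}\geq 0$ as semipositive $(1,1)$-forms. Writing $\Lambda_n^{*}\omega_{d^n+1}=\tilde{\Lambda}_n^{*}\omega_{d_n}+R$ with $R\geq 0$ and expanding with the binomial formula, $(\Lambda_n^{*}\omega_{d^n+1})^{j}-(\tilde{\Lambda}_n^{*}\omega_{d_n})^{j}=\sum_{\ell=1}^{j}\binom{j}{\ell}(\tilde{\Lambda}_n^{*}\omega_{d_n})^{j-\ell}\wedge R^{\ell}$, and each term on the right is semipositive as a wedge product of semipositive smooth $(1,1)$-forms; this gives the desired inequality. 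The only real subtlety is the passage through the finite cover in step one (to justify working with individual multipliers as holomorphic functions); once the symmetric expression $L_n(\cdot,1)$ is recognized, everything is routine.
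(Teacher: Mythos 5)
Your proof is correct and follows essentially the same route as the paper's (much terser) argument: the first identity comes from the defining relation $P_{d_n}^{*}\omega_{d_n}=\sum_i\pi_i^{*}\lambda_{\mathbb{S}^1}$, the second from the dynatomic decomposition (equivalently $\Lambda_n^{*}\omega_{d^n+1}=\sum_{k\mid n}\tfrac{n}{k}\tilde\Lambda_k^{*}\omega_{d_k}$ together with $d^n+1=\sum_{k\mid n}d_k$), and the wedge-power inequality from positivity of the mixed terms. One small correction: since $\lambda_{\mathbb{S}^1}$ is the normalized Lebesgue measure on the unit circle, the currents $dd^cL_k(\cdot,1)$ are not smooth forms but positive closed $(1,1)$-currents with continuous psh potentials, so the positivity of each term in your binomial expansion is a Bedford--Taylor statement about wedge products of such currents (this is precisely what the paper's citation of Klimek supplies), not a pointwise statement about semipositive smooth forms.
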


\begin{proof}
The first equality follows from the definition of the form $\omega_{d_n}$. To get the second equality, it is now sufficient to remark that
$\Lambda_n^*(\omega_{d^n+1})=\sum_{k|n}\frac{n}{k}\tilde \Lambda_k^*(\omega_{d_k})$ and to recall that, by definition of $d_n$, we have $d^n+1=\sum_{k|n}d_k$.

Using the two equalities, the continuity and the plurisubharmonicity of the $L_k$'s gives
\begin{align*}
(\Lambda_n^*(\omega_{d^n+1}))^j=\left(\sum_{k|n}nd_kdd^cL_k\right)^j\geq \left(nd_ndd^cL_n\right)^j=(\tilde\Lambda_n^*\omega_{d_n})^j,
\end{align*}
see, e.g. \cite[Corollary 3.4.9]{Klimek}.
\end{proof}

\begin{lemma}\label{lm:maximality}
Let $d>1$. Pick integers $n\geq1$ and $1\leq k\leq 2d-2$. Let $X\subset\mathcal{M}_d(\mathbb{C})$ be an irreducible subvariety of dimension $k$. Let $X_n\subset X$ (resp. $\tilde{X}_n\subset X$) be the algebraic subvariety of all $[f]\in X$ such that $\dim\left(X\cap \Lambda_n^{-1}\{\Lambda_n([f])\}\right)>0$ (resp. $\dim\left(X\cap \tilde\Lambda_n^{-1}\{\tilde\Lambda_n([f])\}\right)>0$). Then, we have
\[\int_{X_n}(\Lambda_n^*\omega_{d^n+1})^k=\int_{\tilde X_n}(\tilde\Lambda_n^*\omega_{d_n})^k=0.\]
\end{lemma}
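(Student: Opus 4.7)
The plan is to reduce the claim to the following general principle: if $\phi \colon X' \to \bP^N$ is a holomorphic map from an irreducible complex analytic variety $X'$ of dimension $k$ and $\omega$ is a K\"ahler form on $\bP^N$, then the positive $(k,k)$-form $(\phi^{*}\omega)^k$ vanishes pointwise on the analytic subvariety ${X'}^{\circ} := \{x \in X' : \dim_x \phi^{-1}(\phi(x)) \geq 1\}$. Applying this with $(X',\phi,\omega) = (X, \Lambda_n|_X, \omega_{d^n+1})$ yields ${X'}^{\circ} = X_n$ and gives the first identity, while applying it to $(X, \tilde{\Lambda}_n|_X, \omega_{d_n})$ gives ${X'}^{\circ} = \tilde{X}_n$ and the second identity.

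To prove the general principle I would first desingularize: take a proper birational morphism $\pi \colon \tilde{X} \to X'$ with $\tilde{X}$ smooth, and set $\tilde{\phi} := \phi \circ \pi$, so that $\tilde{\phi}^{*}\omega$ is a smooth semi-positive $(1,1)$-form on $\tilde{X}$ and $(\tilde{\phi}^{*}\omega)^k$ is a smooth semi-positive top-degree form whose value at $\tilde{x}$ vanishes exactly when $d\tilde{\phi}_{\tilde{x}}$ fails to have full rank $k$. At any $\tilde{x} \in \pi^{-1}({X'}^{\circ})$, by definition there is an analytic germ $C$ through $\pi(\tilde{x})$ of positive dimension contained in $\phi^{-1}(\phi(\pi(\tilde{x})))$; the properness and surjectivity of $\pi$ lift $C$ to a positive-dimensional analytic germ through $\tilde{x}$ contained in $\tilde{\phi}^{-1}(\tilde{\phi}(\tilde{x}))$, so $\ker d\tilde{\phi}_{\tilde{x}} \neq \{0\}$ and hence $(\tilde{\phi}^{*}\omega)^k(\tilde{x}) = 0$. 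Thus $(\tilde{\phi}^{*}\omega)^k$ vanishes identically on the Zariski-closed set $\pi^{-1}({X'}^{\circ})$, and pushing down through the birational map $\pi$ gives $(\phi^{*}\omega)^k = 0$ on ${X'}^{\circ}$, from which the integral vanishing is immediate.

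The main technical obstacle will be the possible singularities of $X \subset \mathcal{M}_d(\bC)$: the pulled-back form $\Lambda_n^{*}\omega_{d^n+1}$ is only smooth on the regular locus $X_{\mathrm{reg}}$, and one must be careful that the desingularization argument and the projection formula are valid in this setting. This is handled by the standard fact that the exceptional locus of $\pi$ has codimension $\geq 1$ in $\tilde{X}$ and $X \setminus X_{\mathrm{reg}}$ has dimension $<k$, so neither carries mass for a smooth top-degree current, and $\pi$ is an isomorphism between their complements. Once this is checked, the general principle applied to $\Lambda_n|_X$ and $\tilde{\Lambda}_n|_X$ yields both stated identities.
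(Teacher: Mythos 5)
Your argument hinges on treating $\omega_{d^n+1}$ and $\omega_{d_n}$ as K\"ahler forms, so that $(\Lambda_n^*\omega_{d^n+1})^k$ becomes a \emph{smooth} semi-positive top-degree form whose pointwise vanishing can be read off from the rank of the differential. That is not the situation here: by construction $P_N^*\omega_N=\sum_j\pi_j^*(\lambda_{\mathbb{S}^1})$, where $\lambda_{\mathbb{S}^1}$ is the normalized Lebesgue measure on the unit circle, i.e.\ $dd^c\log\max\{1,|w|\}$. So $\omega_N$ is a closed positive $(1,1)$-current with continuous but non-smooth local potentials (the paper's word ``smooth'' notwithstanding), and $\tilde\Lambda_n^*\omega_{d_n}=nd_n\,dd^cL_n(\cdot,1)$ with $L_n(f,1)=\frac{1}{nd_n}\sum_{z}\log\max\{1,|(f^n)'(z)|\}$ only continuous psh. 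Consequently $(\Lambda_n^*\omega_{d^n+1})^k$ is a Bedford--Taylor Monge--Amp\`ere measure, not a form, and the statement ``its value at $\tilde x$ vanishes exactly when $d\tilde\phi_{\tilde x}$ fails to have full rank'' is not meaningful for it: such a measure can be supported anywhere the potentials fail to be pluriharmonic, irrespective of where the differential of the map degenerates. This is a genuine gap, not a technicality one can patch by passing to a smooth representative of $c_1(H_N)$ either, since the mass of the Monge--Amp\`ere measure on the proper subset $X_n$ (unlike the total mass on a compact variety) is not a cohomological invariant.

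The paper's proof is genuinely pluripotential-theoretic: if $[f]\in\tilde X_n$ there is an analytic disc $\varphi:\mathbb{D}\to X$ through $[f]$ contained in a fiber of $\tilde\Lambda_n$, so $L_n(\varphi(\cdot),1)$ is harmonic on $\mathbb{D}$; one then invokes the fact that the Monge--Amp\`ere measure of a continuous psh function puts no mass on the set of points through which such a ``harmonic disc'' passes (Sibony, Corollary A.10.3). If you want to keep the spirit of your approach, you would have to split into the case $\dim X_n<k$ (where $X_n$ is pluripolar and hence negligible for a Monge--Amp\`ere measure with locally bounded potentials) and the case $X_n=X$ (where every fiber is positive-dimensional and one must still argue that the total Monge--Amp\`ere mass vanishes, which again requires a pluripotential argument rather than a rank count). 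As written, your key step does not apply to the objects actually appearing in the lemma.
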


\begin{proof}
Fix an integer $n\geq1$ and let $\tilde E_n\subset X$ be the set of $[f]\in X$ such that there exists an analytic map $\varphi:\mathbb{D}\rightarrow X$ with $\varphi(0)=[f]$ and such that  $L_n(\varphi(\cdot),1):\mathbb{D}\rightarrow\mathbb{R}$ is harmonic.

It is clear that, by definition of $\tilde E_n$, since $nd_ndd^cL_n(\cdot,1)=\tilde\Lambda_n^*\omega_{d_n}$, we have
\[\{[f]\in X\, : \, \dim X\cap \tilde \Lambda_n^{-1}\{\tilde \Lambda_n([f])\}>0\}\subset \tilde E_n.\]
By Lemma~\ref{lm:Lambdan_and_Ln}, the positive measure $\tilde\mu_n=(\tilde\Lambda_n^*\omega_{d_n})^k|_X$
has continuous potential on $X$. In particular, we have $\tilde\mu_n(\tilde E_n)=0$, by e.g.~\cite[Corollary A.10.3]{sibony} and the conclusion follows for $\tilde X_n$. The same proof also works for $X_n$.
\end{proof}

\begin{lemma}\label{lm:criterion}
Let $X\subset\mathcal{M}_d(\bC)$ be any complex algebraic subvareity of dimension $k\geq1$. Assume $X\cap\mathcal{L}_d(\bC)$ is a strict subvareity of $X$. Then 
\[\int_XT_{\mathrm{bif}}^k>0.\]
\end{lemma}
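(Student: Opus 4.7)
The plan is to proceed by induction on $k=\dim X$, using McMullen's rigidity theorem as the crucial dynamical input in the base case and reducing dimension via generic hyperplane slicing in the inductive step.

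For the base case $k=1$: suppose for contradiction that $\int_X T_{\mathrm{bif}}=0$. Since $T_{\mathrm{bif}}|_X=dd^c L|_X$ is a non-negative measure on the irreducible curve $X$ and $L|_X$ is continuous, vanishing of the total mass forces $L|_{X^{\mathrm{reg}}}$ to be harmonic. By DeMarco's identification of $\mathrm{supp}(T_{\mathrm{bif}})$ with the bifurcation locus \cite{DeMarco2} (recalled in Section~\ref{sec:currentDSH}), the holomorphic family of rational maps parameterized by $X^{\mathrm{reg}}$ is $J$-stable. McMullen's rigidity theorem (Lemma~2.1 and Theorem~2.2 of \cite{McMullen4}, already invoked in the proof of Theorem~\ref{tm:9equivalent}) then forces this family to be either isotrivial or contained in $\mathcal{L}_d$. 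Since isotriviality collapses a positive-dimensional subvariety of $\mathcal{M}_d$ to a single point, we must have $X\subset\mathcal{L}_d$, contradicting the hypothesis.

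For the inductive step $k\geq 2$: assuming the result for dimensions $<k$, let $X$ be irreducible of dimension $k$ with $X\not\subset\mathcal{L}_d$, and suppose $\int_X T_{\mathrm{bif}}^k=0$ for contradiction. Choose a generic affine hyperplane $H\subset\mathbb{A}^N$ and set $Y:=X\cap H$. By Bertini $Y$ is irreducible of dimension $k-1$, and since $\dim\mathcal{L}_d=1$ while $k-1\geq 1$, generic $H$ ensures $Y\not\subset\mathcal{L}_d$. By the inductive hypothesis $\int_Y T_{\mathrm{bif}}^{k-1}>0$; Bedford--Taylor slicing (valid because the potential $L$ of $T_{\mathrm{bif}}$ is continuous) combined with the cohomological equivalence $[H]\equiv\omega_D|_X$ then yields
\[
\int_X T_{\mathrm{bif}}^{k-1}\wedge\omega_D \;=\; \int_Y T_{\mathrm{bif}}^{k-1} \;>\;0,
\]
so that $T_{\mathrm{bif}}^{k-1}|_X$ is a nonzero positive closed current on $X$.

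To promote this to $\int_X T_{\mathrm{bif}}^k>0$, I would pass to a cover $\widetilde X\to X^{\mathrm{reg}}$ on which the $2d-2$ critical points can be globally marked, decomposing $L|_{\widetilde X}=\sum_{i=1}^{2d-2}G_i$ as a sum of Green functions of the individual marked critical points and $T_{\mathrm{bif}}|_{\widetilde X}=\sum_i dd^c G_i$ as a sum of activity currents. Vanishing of $(dd^c L)^k$ on $\widetilde X$ would force all mixed wedge products $dd^c G_{i_1}\wedge\cdots\wedge dd^c G_{i_k}$ to vanish, meaning at most $k-1$ critical points are independently active on $\widetilde X$; a higher-dimensional analogue of McMullen's rigidity---asserting that a family with only $\leq k-1$ independently active critical points over a $k$-dimensional base must be trivial along the passive directions, hence lie in $\mathcal{L}_d$---then closes the argument. \textbf{The main obstacle} is precisely this final rigidity step: converting vanishing of mixed Monge--Amp\`ere products of individual critical activity currents into the conclusion $X\subset\mathcal{L}_d$ demands careful handling of marked critical points (after possibly further covers), of persistent orbit relations between them, and of a higher-dimensional extension of McMullen's theorem governing families with only partial active critical directions.
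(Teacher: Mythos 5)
Your base case ($k=1$) is correct and is essentially the same dynamical input the paper uses (McMullen's rigidity plus the identification of $\supp(T_{\mathrm{bif}})$ with the bifurcation locus), and the hyperplane-slicing reduction in the inductive step is sound as far as it goes: it does yield $T_{\mathrm{bif}}^{k-1}\wedge[X]\neq 0$. But the argument then stops exactly where the lemma's actual content begins. The passage from ``$T_{\mathrm{bif}}^{k-1}$ is nonzero on $X$'' to ``$T_{\mathrm{bif}}^{k}$ is nonzero on $X$'' is the whole difficulty, and the ``higher-dimensional analogue of McMullen's rigidity'' you invoke to close it is not an available theorem --- it is, in substance, a restatement of the lemma itself. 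Moreover the intermediate deduction is shaky: vanishing of all mixed products $dd^cG_{i_1}\wedge\cdots\wedge dd^cG_{i_k}$ (which does follow from positivity of each term) does not translate into ``at most $k-1$ critical points are independently active''; wedge products of activity currents can vanish for reasons of non-transversality of the activity loci even when all $2d-2$ critical points are active. So the proposal has a genuine gap at its final step, which you yourself flag as the main obstacle.

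What the paper does instead is a direct construction, with no induction. Working on a branched cover of $X$ where the critical points are marked, it first uses McMullen's theorem to get a nonempty bifurcation locus on $Y=X\setminus(X\cap\mathcal{L}_d)$, then uses Ma\~n\'e--Sad--Sullivan to cut $X$ successively by $k$ hypersurfaces $\mathrm{Per}_{m_j}(e^{2i\pi\theta_j})$ with proper intersections. Restricting to the curve obtained from the first $k-1$ of these, it runs a Montel-type perturbation argument $k$ times to produce a single parameter $\lambda_k\in Y$ at which $k$ \emph{distinct} marked critical points are preperiodic to repelling cycles of pairwise distinct exact periods, with the corresponding critical-relation subvarieties $Y_1,\dots,Y_k$ intersecting properly at $\lambda_k$. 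The known criterion \cite[Theorem 6.2]{Article1} then puts $\lambda_k$ in $\supp(T_{\mathrm{bif}}^k\wedge[X])$, which gives $\int_X T_{\mathrm{bif}}^k>0$ directly. In other words, the rigidity statement you would need is replaced by an explicit transversality construction plus a quantitative membership criterion for $\supp(T_{\mathrm{bif}}^k)$; to repair your proof you would need to supply exactly this construction (or an equivalent substitute) at the last step.
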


\begin{proof}
Let us set
\[\mathrm{Per}_n(w):=\{[f]\in\mathcal{M}_d(\mathbb{C})\, ; \ p_{d,n}([f],w)=0\}.\]
Up to taking a finite branched cover of $X$, we may assume that $(f_\lambda)_{\lambda\in X}$ is a family of rational maps endowed with $2d-2$ marked critical points, i.e. that there exist morphisms $c_1,\ldots,c_{2d-2}:X\to\bP^1$ such that $\mathrm{Crit}(f_\lambda)=\{c_1(\lambda),\ldots,c_{2d-2}(\lambda)\}$ counted with multiplicity. Since $\dim X>0$ and $X\cap\mathcal{L}_d(\bC)$ is a strict subvariety of $X$, the bifurcation locus of the quasi-projective vareity $Y:=X\setminus X\cap\mathcal{L}_d(\bC)$ is non-empty by \cite[Theorem 2.2]{McMullen4}. 

	By~\cite{MSS}, this implies that there exists $m_1\geq1$ and $\theta_1\in\mathbb{R}\setminus\mathbb{Q}$ such that $\mathrm{Per}_{m_1}(e^{2i\pi\theta_1})$ is a non-empty proper subvariety of $Y$. We repeat the argument to find $m_2>m_1$ and $\theta_2\in\mathbb{R}\setminus\mathbb{Q}$ such that $\mathrm{Per}_{m_2}(e^{2i\pi\theta_2})\cap \mathrm{Per}_{m_1}(e^{2i\pi\theta_1})\neq\varnothing$ has codimension $2$ in $X$. Applying this argument inductively gives $m_1<\cdots<m_k$ and $\theta_1,\ldots,\theta_k\in\mathbb{R}\setminus\mathbb{Q}$ such that
	$\mathrm{Per}_{m_1}(e^{2i\pi\theta_1})\cap\cdots\cap\mathrm{Per}_{m_k}(e^{2i\pi\theta_k})$ is a non-empty finite subset of $Y$.
	
	We define a quasi-projective curve in $Y$ by
	$C:=\bigcap_{j=1}^{k-1}\mathrm{Per}_{m_j}(e^{2i\pi\theta_j})$.
	As there exists a parameter $\tilde{\lambda}_0\in C$ for which $f_{\tilde{\lambda}_0}$ has a neutral cycle, non-persistent in $C$, the bifurcation locus $\mathrm{Bif}(C)=\supp(dd^cL|_C)$ of $C$ is non-empty. 
	Pick $\lambda_0\in \mathrm{Bif}(C)$ and let $U$ be a small neighborhood of that $\lambda_0$ in $X$. By Montel Theorem, there exists $\lambda_1\in U\cap C$, $1\leq i_1\leq 2d-2$ and $q_1,r_1\geq1$ such that $f_{\lambda_1}^{q_1}(c_{i_1}(\lambda_1))=f_{\lambda_1}^{q_1+r_1}(c_{i_1}(\lambda_1))$ and $f_{\lambda_1}^{q_1}(c_{i_1}(\lambda_1))$ is a repelling periodic point of $f_{\lambda_1}$ of exact period $r_1$, and such that this relation is not persistent through $C$ (see e.g.~\cite[Lemma 2.3]{favredujardin}). Let
	\[C_1:=\{\lambda\in U\, ; \ f_\lambda^{q_1}(c_{i_1}(\lambda))=f_{\lambda}^{q_1+r_1}(c_{i_1}(\lambda))\}\cap\bigcap_{j=1}^{k-2}\mathrm{Per}_{m_j}(e^{2i\pi\theta_j})\subset U.\]
	By the same argument as above, we find $\lambda_2\in U\cap C_1$ very close to $\lambda_1$, $1\leq i_2\leq 2d-2$ distinct from $i_1$, $q_2\geq1$ and $r_2>r_1$ such that $f_{\lambda_2}^{q_2}(c_{i_2}(\lambda_2))=f_{\lambda_2}^{q_2+r_2}(c_{i_2}(\lambda_2))$ and $f_{\lambda_2}^{q_2}(c_{i_2}(\lambda_2))$ is a repelling periodic point of $f_{\lambda_2}$ of exact period $r_2$, and such that this relation is not persistent through $C_1$. By a finite induction, we find a parameter $\lambda_{k-1}\in Y$, positive integers $q_1,\ldots,q_{k-1}$ and $r_1<r_2<\ldots<r_{k-1}$ and pairwise distinct indices $i_1,\ldots,i_{k-1}$ such that for all $1\leq j\leq k-1$:
	\[\lambda_{k-1}\in Y_j:=\{\lambda\in Y\, ; \ f_{\lambda}^{q_j}(c_{i_j}(\lambda))=f_{\lambda}^{q_j+r_j}(c_{i_j}(\lambda))\},\]
 $f_{\lambda_{k-1}}^{q_j}(c_{i_j}(\lambda_{k-1}))$ is a repelling periodic point for $f_{\lambda_{k-1}}$ of exact periods $r_j$ and the intersection of the $Y_j$'s is proper at $\lambda_{k-1}$. 
	Moreover, the intersection $Y_1\cap\cdots \cap Y_{k-1}\cap\mathrm{Per}_{m_k}(e^{2i\pi\theta_k})$ is proper, hence we may proceed as above to find $\lambda_k\in Y_1\cap\cdots \cap Y_{k-1}$, positive integers $q_k$ and $r_k$ and $i_k\notin \{i_1,\ldots,i_{k-1}\}$ such that 
	\[\lambda_k\in Y_k:=\{\lambda\in Y\, ; \ f_{\lambda}^{q_k}(c_{i_k}(\lambda))=f_{\lambda}^{q_k+r_k}(c_{i_k}(\lambda))\}\]
	and, for all $1\leq j\leq k-1$, $f_{\lambda_{k}}^{q_j}(c_{i_j}(\lambda_k))$ is a repelling periodic point for $f_{\lambda_{k}}$ of exact periods $r_j$ and the intersection of the $Y_j$'s is proper at $\lambda_{k}$. 
	By \cite[Theorem 6.2]{Article1}, we have $\lambda_k\in\supp(T_{\mathrm{bif}}^k\wedge[X])$ and the proof is complete.
\end{proof}

\subsection{Multiplier maps and the mass of bifurcation currents}
From now on, we fix an embedding $i:\mathcal{M}_d\hookrightarrow\mathbb{A}^N$ into some affine space and we let $\overline{\mathcal{M}_d}$ be the Zariski closure of $i(\mathcal{M}_d)$ in $\mathbb{P}^N$. Set
\[D:=\mathcal{O}_{\mathbb{P}^N}(1)|_{\overline{\mathcal{M}_d}}\]
and let $\omega_{\mathcal{M}_d,D}$ be the form $\omega_{\mathrm{FS},\mathbb{P}^N}|_{\overline{\mathcal{M}_d}(\bC)}$, where $\omega_{\mathrm{FS},\mathbb{P}^N}$ is the normalized Fubini-Study form of $\mathbb{P}^N(\mathbb{C})$. Recall that $D$ is an ample line bundle on $\overline{\mathcal{M}_d}$ and that we then have
\[(D^{2d-2})=\int_{\overline{\mathcal{M}_d}}\omega_{\mathcal{M}_d,D}^{2d-2}=\deg_D(\mathcal{M}_d).\]
The multiplier map $\Lambda_n$ and exact multiplier map $\tilde{\Lambda}_n$ define rational mappings $\overline{\mathcal{M}_d}\dashrightarrow \mathbb{P}^{d^n+1}$ $\overline{\mathcal{M}_d}\dashrightarrow \mathbb{P}^{d_n}$ which are defined over $\mathbb{Q}$ that we still denote by $\Lambda_n$ and $\tilde{\Lambda}_n$ respectively.

We prove here complex analytic properties of the multiplier maps that we will rely on in the sequel.
The first ingredient we use follows from~\cite{GOV} and can be summarized as follows.

\begin{proposition}\label{lm:mass}
There exists a constant $C>0$, depending only on $d$ and $\deg_D(\mathcal{M}_d)$, such that for any irreducible subvariety $X\subset\mathcal{M}_d(\mathbb{C})$ of dimension $k\geq1$ and any integers 
$1\leq j\leq k$ and $n\geq1$, we have
\[\left|\frac{1}{(nd_n)^j}\|(\tilde{\Lambda}_n^*\omega_{d_n})^j\|_{X,D}-\|T_{\mathrm{bif}}^j\|_{X,D}\right|\leq\left(\|T_{\mathrm{bif}}\|_{X,D}+C\deg_D(X)\frac{\sigma_2(n)}{d^n}\right)^j-\|T_{\mathrm{bif}}\|_{X,D}^{j},\]
and 
\begin{multline*}
\left|\frac{1}{n^j(d^n+1)^j}\|(\Lambda_n^*\omega_{d^n+1})^j\|_{X,D}-\|T_{\mathrm{bif}}^j\|_{X,D}\right|\\
\leq\left(\|T_{\mathrm{bif}}\|_{X,D}+C\deg_D(X)\frac{\sum_{\ell | n}\sigma_2(\ell)}{d^{n}}\right)^j-\|T_{\mathrm{bif}}\|_{X,D}^{j}.
\end{multline*}
\end{proposition}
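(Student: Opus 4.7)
The plan is to combine the archimedean approximation of $L$ by $L_n(\cdot, 1)$ from Theorem~\ref{prop:approxArch} with the DSH cutoff machinery of Section~\ref{sec:currentDSH}, following the blueprint of~\cite{GOV}. Setting $T_n := dd^cL_n(\cdot,1)$ and $T := T_{\mathrm{bif}} = dd^cL$, Lemma~\ref{lm:Lambdan_and_Ln} reduces the $\tilde\Lambda_n$ estimate to bounding $|\|T_n^j\|_{X,D} - \|T^j\|_{X,D}|$ by $(\|T\|_{X,D} + \delta_n)^j - \|T\|_{X,D}^j$ with $\delta_n := C\deg_D(X)\sigma_2(n)/d^n$; the $\Lambda_n$ bound then follows from the divisorwise decomposition in the same lemma by summing the $\tilde\Lambda_k$-estimates over $k \mid n$.

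First I would settle the scalar case $j = 1$. Theorem~\ref{prop:approxArch} together with an archimedean version of Lemma~\ref{lm:absolutevalues-Lyap} yields a pointwise estimate of the form $|L_n - L|([f]) \leq c(1 + \log^+\|[f]\|)\sigma_2(n)/d_n$ on $\mathcal{M}_d(\bC) \hookrightarrow \bA^N(\bC)$, where $c$ depends only on $d$; in particular $\|L_n - L\|_{L^\infty(\bar{\mathbb{B}}(0,R^2))} \leq c'\log R \cdot \sigma_2(n)/d_n$ for $R$ large. Using the cutoff $\chi_R := -\Psi_R$ and the closedness of $T_n, T, \omega_D$, integration by parts gives
\[
\int_X(T_n - T) \wedge \chi_R\, \omega_D^{k-1} = -\int_X(L_n - L)\,dd^c\Phi_{R,X,1}.
\]
The DSH decomposition $dd^c\Phi_{R,X,1} = T_R^+ - T_R^-$ with $\|T_R^\pm\|_{X,D} \leq \deg_D(X)/\log R$ (Lemma~\ref{lm:goodDSH}) cancels the two logarithmic factors, producing an $R$-uniform bound of size $C''\deg_D(X)\sigma_2(n)/d^n$; passing $R \to \infty$ by dominated convergence yields the base case.

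For higher $j$ I would iterate using the telescoping identity $T_n^j - T^j = \sum_{i=0}^{j-1}(T_n - T)\wedge T_n^{j-1-i}T^i$ and the same cutoff procedure, producing
\[
\bigl|\|T_n^j\|_{X,D} - \|T^j\|_{X,D}\bigr| \leq C'''\frac{\sigma_2(n)}{d^n}\sum_{i=0}^{j-1}\|T_n^{j-1-i}T^i\|_{X,D}.
\]
The mixed masses $\|T_n^{j-1-i}T^i\|_{X,D}$ are bounded by Khovanskii--Teissier log-concavity applied to the nef classes $\{T\}, \{T_n\}, \{\omega_D\}$ on the projective closure $\overline{\mathcal{M}_d}$; feeding the base-case bound $\|T_n\|_{X,D} \leq \|T\|_{X,D} + \delta_n$ into the recursion produces precisely the binomial expression $(\|T\|_{X,D} + \delta_n)^j - \|T\|_{X,D}^j$.

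The hard part will be the control of the mixed intersection numbers $\|T_n^{j-1-i}T^i\|_{X,D}$ on the quasi-projective space $\mathcal{M}_d$: one has to pass to the projective compactification $\overline{\mathcal{M}_d}$ (where log-concavity of nef classes applies directly) and justify that the resulting cohomological bounds transfer back. The DSH cutoff is engineered precisely for this balance: it concentrates the error onto a compact annulus where $L_n - L$ grows like $\log R$, while the $1/\log R$ mass of $dd^c\Phi_{R,X,j}$ supplied by Lemma~\ref{lm:goodDSH} cancels that growth, producing estimates uniform in $R$.
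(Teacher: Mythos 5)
Your proposal follows the paper's proof almost step for step: the reduction via Lemma~\ref{lm:Lambdan_and_Ln} to comparing $dd^cL_n(\cdot,1)$ with $dd^cL$, the $j=1$ case by pairing against the DSH cutoff $\Phi_{R,X,1}$ so that the $\log R$ growth of $|L_n-L|$ (from Theorem~\ref{prop:approxArch} together with the growth estimate \eqref{eq:growthu}) cancels against the $1/\log R$ mass bound of Lemma~\ref{lm:goodDSH}, the telescoping identity $T_n^j-T^j=(T_n-T)\wedge\sum_{r}T_n^{r}\wedge T^{j-1-r}$ for $j\ge 2$ followed by re-injecting the $j=1$ bound to produce the binomial expression, and the divisor decomposition $\Lambda_n^*\omega_{d^n+1}=\sum_{\ell\mid n}\tfrac{n}{\ell}\tilde\Lambda_\ell^*\omega_{d_\ell}$ for the second inequality (where, as you should make explicit, one must redo the telescoping for $S_{n,j}$ rather than literally sum the $\tilde\Lambda_\ell$-estimates, since for $j\ge2$ the cross terms do not split).

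The one step that does not work as you state it is the control of the mixed masses $\|T_n^{j-1-i}\wedge T^i\|_{X,D}$ by ``Khovanskii--Teissier log-concavity.'' The Khovanskii--Teissier inequalities give \emph{lower} bounds for mixed intersection numbers of nef classes in terms of products of powers of pure intersections; the reverse inequality, which is what you need here, is false for general nef classes (already on surfaces, two nef classes can satisfy $(\alpha^2)=(\beta^2)=0$ while $(\alpha\cdot\beta)>0$). What the paper uses instead is a B\'ezout-type cohomological computation: all the currents involved ($\tilde\Lambda_n^*\omega_{d_n}$, $T_{\mathrm{bif}}$, and the pieces $T^\pm$ of $dd^c\Phi_{R,X,j}$) are closed positive with continuous potentials and are identified with restrictions of currents on the ambient projective space, so the mass of their wedge product against $\omega_D^{k-j+1}$ is computed from the product of their normalized degrees, yielding exactly the factor $\deg_D(X)\,\|\tilde\Lambda_n^*\omega_{d_n}\|_{X,D}^{r}\,\|T_{\mathrm{bif}}\|_{X,D}^{j-r-1}/(nd_n)^r$ that enters the recursion. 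You correctly flag this as the delicate point, but the tool you name would not close it; replacing it by the cohomological product formula (as in \cite{GOV} and in the proofs of Lemmas~\ref{lm:goodDSH} and~\ref{lm:Siu}) completes the argument.
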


For any $h\in\mathrm{Rat}_d(\mathbb{C})$, we let
\[\tilde{u}(h):=L(h)+32\log\left(\sup_{\mathbb{P}^1(\mathbb{C})}h^\#\right)+\sup_{\mathbb{P}^1(\mathbb{C})}|g_h|\geq0,\]
and, for any $[f]\in\mathcal{M}_d(\mathbb{C})$, we let $u([f]):=\inf \tilde{u}(h)$,
where the infimum is taken over all $h\in\mathrm{Rat}_d(\mathbb{C})$ which are conjugated to $f$.

\begin{claim}
There exists $C_1,C_2>0$ depending only on $d$ and $\deg_L(\mathcal{M}_d)$ such that
\begin{align}
u([f])\leq C_1\log^+\|[f]|\|_{\mathbb{A}^N(\bC)}+C_2,\label{eq:growthu}
\end{align}
for all $[f]\in\mathcal{M}_d(\mathbb{C})$.
\end{claim}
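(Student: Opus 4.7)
The strategy is to bound $\tilde u(h)$ along a well-chosen representative via Lemma~\ref{badbutexplicit}, and then relate the resulting bound to $\|\iota([f])\|_{\mathbb{A}^N(\bC)}$ through the GIT description of $\mathcal{M}_d = \Rat_d/\SL_2$.

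First, I would apply the archimedean case of Lemma~\ref{badbutexplicit} to any representative $h\sim f$ and any lift $H$ of $h$. Collecting the three bounds provided by the lemma — on $|L(h)|$, on $\log M(h) = 2\log\sup_{\bP^1}h^{\#}$, and on $\sup_{\sP^1}|g_h|$ — and summing with the weights $1$, $16$, and $1$ dictated by the definition of $\tilde u$, I obtain an inequality of the shape
\begin{equation*}
\tilde u(h)\le \alpha(d)\log|H| + \beta(d)\bigl(-\log|\Res H|\bigr) + \gamma(d),
\end{equation*}
where $\alpha(d),\beta(d),\gamma(d)>0$ are explicit constants depending only on $d$ (involving the constants $A_1(d)$ and $A_2(d)$).

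Second, I would invoke the classical invariant-theoretic description of the embedding $\iota$. Since $\iota\colon\mathcal{M}_d\hookrightarrow\bA^N$ is a closed immersion defined by $\SL_2$-invariant polynomials of uniformly bounded (weighted) degree in the coefficients of $f$, for any $[f]\in\mathcal{M}_d(\bC)$ one can select a representative $h\sim f$ together with a lift $H$ — morally, a Kempf--Ness point of the $\SL_2$-orbit — for which both $|H|$ and $|\Res H|^{-1}$ are controlled polynomially by $1+\|\iota([f])\|_{\mathbb{A}^N(\bC)}$. Concretely, this gives bounds of the form $\log|H|\le c_1\log^+\|\iota([f])\|+c_1'$ and $-\log|\Res H|\le c_2\log^+\|\iota([f])\|+c_2'$, where the constants depend only on $d$ and on the combinatorics of $\iota$ (hence on $\deg_D(\mathcal{M}_d)$).

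Combining these two steps produces a representative $h\sim f$ with $\tilde u(h)\le C_1\log^+\|\iota([f])\|_{\mathbb{A}^N(\bC)}+C_2$, and passing to the infimum over the conjugacy class yields \eqref{eq:growthu}. The main obstacle is the GIT comparison in the second step: one must justify that an orbit representative of polynomially-bounded norm and non-degenerately-small resultant exists. This can be carried out either by appealing to the Kempf--Ness theorem for reductive group actions on $\bA^{2d+2}$ (using that $\Res\ne 0$ characterizes the semistable locus inside $\bP^{2d+1}$) or by a direct algebraic estimate comparing the generators of $\bC[\Rat_d]^{\SL_2}$ to the coefficients of a well-chosen lift.
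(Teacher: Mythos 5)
Your first step is exactly right and matches the paper: the archimedean case of Lemma~\ref{badbutexplicit} reduces the problem to bounding $\log|H|$ and $-\log|\Res H|$ for a well-chosen lift $H$ of a well-chosen representative $h\sim f$. The gap is in your second step, which you yourself flag as ``the main obstacle'': the existence of a representative whose norm and inverse resultant are \emph{polynomially} controlled by $1+\|\iota([f])\|$ carries essentially all the content of the claim, and neither of the two routes you name is carried out. In particular, the Kempf--Ness theorem gives you a minimal vector in the (closed) $\SL_2$-orbit of a lift, but it does not by itself produce the quantitative comparison between $|\Res H|$ at that minimal vector and the invariant coordinates $\iota([f])$; that comparison is a \L{}ojasiewicz-type inequality that has to be proved separately.

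The paper sidesteps the orbit-selection problem by working on an explicit finite slice: every class $[f]$ has a representative $g$ admitting a lift of the normal form $G=(Z^d+\sum_{j=1}^{d-1}a_jZ^jW^{d-j}+W^d,\ \sum_{j=1}^{d-1}b_jZ^jW^{d-j})$, so the family is parametrized by $(a,b)\in\bC^{2d-2}$ and compactified as $\bP^{2d-2}$. On this slice the step-one bound reads $\tilde u(g)\le C_1\bigl|\log|\Res G|\bigr|+C_2\log^+\|g\|+C_3\le -C_4\log d_{\bP^{2d-2}}(g,Z)+C_5$ with $Z=\{\Res=0\}$, and the required comparison with $\|\iota([f])\|$ comes from the fact that the classifying map $j:V=\bP^{2d-2}\setminus Z\to\mathcal{M}_d$ extends as a meromorphic map $\bP^{2d-2}\dashrightarrow\overline{\mathcal{M}}_d$, which yields $d_{\bP^N}(j(g),\partial\mathcal{M}_d)\le C\,d_{\bP^{2d-2}}(g,Z)^{\alpha}+C'$ for some $\alpha>0$. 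This last inequality is precisely the quantitative input your plan is missing; if you want to keep the GIT formulation you would need to supply an analogous polynomial estimate, for instance by writing the coordinates of $\iota$ as invariants of bounded degree divided by powers of $\Res$ and proving a polynomial lower bound on the maximal resultant over the normalized orbit.
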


The proof of this Claim is an adaption of the argument in the proof of \cite[Lemma 6.5]{GOV} and we postpone it at the end of the paragraph.

\begin{proof}[Proof of Proposition~\ref{lm:mass}]
We follow closely the proof of~\cite[Theorem~A and Theorem~C]{GOV}. We treat both inequalities at the same time, since it follows the same strategy. 
Recall the definition of the auxiliary function $\Psi_R$ on $\bA^N(\bC)$
for any $R>0$ in Subsection~\ref{sec:currentDSH}.
Fix $X$, hence $k$, and $1\leq j\leq k$ as required.

\medskip

As before, for all $R>0$, we define $\Psi_R:\mathbb{A}^N\rightarrow\mathbb{R}$ by letting
\[\Psi_R(Z):=\frac{1}{\log R}\min\left\{\log\max\{\|Z\|,R\}-2\log R,0\right\}\]
for all $Z\in\mathbb{A}^N(\mathbb{C})$.
Fix $X$, $k\geq1$, and $1\leq j\leq k$ as required. Pick an integer $n$ and let
\[\tilde S_{n,j}:=\frac{1}{(nd_n)^j}(\tilde\Lambda_n^*\omega_{d_n})^j-T_{\mathrm{bif}}^j \ \text{and} \ \Phi_{R,X,j}:=\left.\left(\Psi_R\cdot\left(\omega_{\mathcal{M}_d,L}\right)^{k-j}\right)\right|_X, \ R>0.\]

Assume first $j=1$. By \eqref{eq:growthu} and by Theorem~\ref{prop:approxArch} and Lemmas~\ref{lm:goodDSH} and~\ref{lm:Lambdan_and_Ln}, we get
\[\left|\left\langle \tilde{S}_{n,1},\Phi_{R,X,1}\right\rangle\right|\leq \frac{\sigma_2(n)}{d_n}\left(C_1\log R+C_2\right)\|T^+\|_{X,D}\leq 2\frac{\sigma_2(n)}{d_n}\deg_L(X)\left(C_1+\frac{C_2}{\log R}\right),\]
where $T^\pm$ are closed and positive currents such that $dd^c\Phi_{R,X,1}=T^+-T_-$ and $C_1,C_2>0$ depend only on $d$ and $\deg_D(\mathcal{M}_d)$ but not on $R$.
Moreover, when $R\rightarrow+\infty$, 
\[\left|\left\langle \tilde{S}_{n,1},\Phi_{R,X,1}\right\rangle\right|\longrightarrow \left|\frac{1}{nd_n}\|\tilde{\Lambda}_n^*(\omega_{d_n})\|_{X,D}-\|T_{\mathrm{bif}}\|_{X,D}\right|.\]
so that
\begin{align}
\left|\frac{1}{nd_n}\|\tilde{\Lambda}_n^*(\omega_{d_n})\|_{X,D}-\|T_{\mathrm{bif}}\|_{X,D}\right|\leq C\frac{\sigma_2(n)}{d_n}\deg_D(X),\label{casej=1}
\end{align}
where $C>0$ is independent of $n$.
To conclude in that case, we just need to recall that $d_n\geq(1-d^{-1})d^n$.

Assume now $j\geq2$.
Lemma~\ref{lm:Lambdan_and_Ln} and an easy computation give
\[\tilde S_{n,j}=\left(dd^cL_n(\cdot,1)-dd^cL\right)\wedge\sum_{r=0}^{j-1}\frac{1}{(nd_n)^n}(\tilde{\Lambda}_n^*(\omega_{d_n}))^r\wedge T_{\mathrm{bif}}^{j-r-1}.\]
The same strategy as in the case $j=1$ implies that, for a given $R>0$, we have
\[\left|\langle \tilde S_{n,j},\Psi_{R,X,j}\rangle\right|\leq \frac{\sigma_2(n)}{d^{n}}(C_1\log R+C_2)\sum_{r=0}^{j-1}\left|\int_X(T_++T_-)\wedge\frac{1}{(nd_n)^r}(\tilde{\Lambda}_n^*\omega_{d_n})^r \wedge T_{\mathrm{bif}}^{j-r-1}\right|\]
 Applying B\'ezout's Theorem and Lemma~\ref{lm:goodDSH}, we get
\[\left|\langle \tilde S_{n,j},\Psi_{R,X,j}\rangle\right|\leq\frac{\sigma_2(n)}{d^{n}}\left(C_1+\frac{C_2}{\log R}\right)2\sum_{r=0}^{j-1}\deg_D(X)\frac{1}{(nd_n)^r}\|\tilde{\Lambda}_n^*\omega_{d_n}\|_{X,D}^r\|T_{\mathrm{bif}}\|_{X,D}^{j-r-1}.\]
Making $R\rightarrow+\infty$ , we find that
\begin{align}
\left|\frac{1}{(nd_n)^j}\|(\tilde{\Lambda}_n^*\omega_{d_n})^j\|_{X,D}-\|T_{\mathrm{bif}}^j\|_{X,D}\right|\label{massonsubvariety}
\end{align}
is bounded above by
\[C_4\frac{\sigma_2(n)}{d^{n}}\deg_D(X)\sum_{r=0}^{j-1}\frac{1}{(nd_n)^r}\|\tilde{\Lambda}_n^*\omega_{d_n}\|_{X,D}^r\|T_{\mathrm{bif}}\|_{X,D}^{j-r-1},\]
where $C_4=\max\{2C_1,C\}$ is again a universal constant. Finally, \eqref{casej=1} gives 
\[\frac{1}{(nd_n)^r}\|\tilde{\Lambda}_n^*\omega_{d_n}\|_{X,D}\leq \|T_{\mathrm{bif}}\|_{X,L}+C_4\deg_D(X)\frac{\sigma_2(n)}{d^{n}}.\]
In particular, the above upper bound on \eqref{massonsubvariety} can be rewritten as
\[\left(\|T_{\mathrm{bif}}\|_{X,D}+C_4\deg_D(X)\frac{\sigma_2(n)}{d^{n}}\right)^j-\|T_{\mathrm{bif}}\|_{X,D}^j,\]
and the proof is complete in that case.

~

To end the proof, observe that, if we let
\[S_{n,j}:=\frac{1}{n^j(d^n+1)^j}(\Lambda_n^*\omega_{d^n+1})^j-T_{\mathrm{bif}}^j,\]
since we have $\Lambda_n^*\omega_{d^n+1}=\sum_{\ell|n}\frac{n}{\ell}\tilde{\Lambda}_\ell^*(\omega_{d_\ell})$ and $d^n+1=\sum_{\ell|n}d_\ell$, we have
\[\frac{1}{n(d^n+1)}\Lambda_n^*\omega_{d^n+1}=\sum_{\ell |n}\frac{d_\ell}{n(d^n+1)}\frac{1}{d_\ell}\frac{n}{\ell}\tilde{\Lambda}_\ell^*(\omega_{d_\ell})\]
and an easy computation gives
\[S_{n,j}=\sum_{\ell |n}\frac{d_\ell}{d^n+1}\left(dd^cL_{\ell}(\cdot,1)-dd^cL\right)\wedge\sum_{r=0}^{j-1}\frac{1}{n^r(d^n+1)^r}(\Lambda_n^*\omega_{d^n+1})^r\wedge T_{\mathrm{bif}}^{j-r-1}.\]
We then apply the same strategy as in the previous case to conclude.
\end{proof}

\begin{proof}[Proof of the Claim]
This follows from the same strategy of proof as that of Lemma~6.5 of \cite{GOV}. 
Any $f\in\mathrm{Rat}_d(\bC)$ is conjugated to a (non-zero) finite number of rational maps $g$ admitting a lift $G$ of the form
\[G(z,w)=\left(Z^d+\sum_{j=1}^{d-1}a_jZ^jW^{d-j}+W^d,\sum_{j=1}^{d-1}b_jZ^jW^{d-j}\right).\]
with $(a_1,\ldots,a_{d-1},b_{d-1},\ldots,b_1)\in\mathbb{C}^{2d-2}$. Let $Z:=\{\mathrm{Res}(G)=0\}\subset\mathbb{P}^{2d-2}(\bC)$ and let $V:=\mathbb{P}^{2d-2}(\bC)\setminus Z$. This is a Zariski open set and, applying the same proof as in \cite[Lemma~6.5]{GOV} in the above family parametrized by $V$ gives $C_1,C_2,C_3>0$ such that
\[\tilde{u}(g)\leq C_1\left|\log|\mathrm{Res}(G)|\right|+C_2\log^+\|g\|+C_3\]
for all $g\in V$, where $\|g\|$ is the Euclidian norm of the vector $(a_1,\ldots,a_{d-1},b_{d-1},\ldots,b_1)\in\bC^{2d-2}$. Since $\mathrm{Res}\in\bC[V]$ and $V$ is a Zariski open subset of $\mathbb{P}^{2d-2}(\bC)$, this gives constants $C_4,C_5>0$ such that for all $g\in V$, we have
\[\tilde{u}(g)\leq -C_4\log d_{\bP^{2d-2}}(g,Z)+C_5.\]
The map $j:g\in V\mapsto [g]\in \mathcal{M}_d(\bC)$ extends as a meromorphic map $\mathbb{P}^{2d-2}(\bC)\dasharrow \overline{\mathcal{M}}_d(\bC)$ and is holomorphic on $V$. In particular,there exists $C,C',\alpha>0$ such that
\[d_{\mathbb{P}^N}(j(g),\partial\mathcal{M}_d) \leq C d_{\mathbb{P}^{2d-2}}(g,Z)^\alpha +C'\]
for all $g\in V$. This ends the proof by surjectivity of $j$ on $V$.
\end{proof}

\subsection{The multiplier map is generically finite for $n$ large enough}

We now come to our main result in this section. We want here to quantify the fact that multipliers determine the conjugacy class outside the flexible Latt\`es locus, up to finitely many choices.

Namely, we prove the following.

\begin{proposition}\label{tm:McMmap}
Fix an integer $d>1$. There exists an integer $n_0(d)\geq1$ depending only on $d$ such that the following holds
\begin{enumerate}
\item for all $n\geq n_0(d)$, the maps
$\Lambda_n$ and $\tilde\Lambda_n$ are generically finite-to-one,
\item for all $n\geq n_0(d)$, there exists a multiple $N$ of $n$ such that the map $\Lambda_N$ is finite-to-one outside of the flexible Latt\`es curve $\mathcal{L}_d\subset\mathcal{M}_d$.
\end{enumerate}
\end{proposition}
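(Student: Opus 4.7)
The plan is to derive part (i) from the mass comparison in Proposition \ref{lm:mass} and part (ii) from McMullen's finiteness theorem combined with (i).

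For (i), I would apply Proposition \ref{lm:mass} to the ambient irreducible variety $X = \mathcal{M}_d$ with $k = j = 2d-2$ to obtain
\[
\left|\frac{\|(\tilde{\Lambda}_n^{*}\omega_{d_n})^{2d-2}\|_{\mathcal{M}_d, D}}{(nd_n)^{2d-2}} - \|\mu_{\mathrm{bif}}\|_{\mathcal{M}_d}\right| \le \left(\|T_{\mathrm{bif}}\|_{\mathcal{M}_d, D} + C\deg_{D}(\mathcal{M}_d)\frac{\sigma_2(n)}{d^n}\right)^{2d-2} - \|T_{\mathrm{bif}}\|_{\mathcal{M}_d, D}^{2d-2}.
\]
Since $\sigma_2(n) = O(n^2 \log\log n)$ is subexponential, the right-hand side tends to $0$ as $n \to \infty$. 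Because $\|\mu_{\mathrm{bif}}\|_{\mathcal{M}_d} > 0$ (Bassanelli-Berteloot), there is an explicit $n_0(d)$, depending on $d$ only through $\|\mu_{\mathrm{bif}}\|_{\mathcal{M}_d}$, $\|T_{\mathrm{bif}}\|_{\mathcal{M}_d, D}$ and $\deg_{D}(\mathcal{M}_d)$, such that $\|(\tilde{\Lambda}_n^{*}\omega_{d_n})^{2d-2}\|_{\mathcal{M}_d, D} > 0$ for every $n \ge n_0(d)$. If $\tilde{\Lambda}_n$ failed to be generically finite, its image would have dimension strictly less than $2d-2 = \dim \mathcal{M}_d$, and the pullback of the top-degree form $\omega_{d_n}^{2d-2}$ would vanish identically — contradicting the positivity just obtained. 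Hence $\tilde{\Lambda}_n$ is generically finite-to-one, and the same follows for $\Lambda_n$ from the pointwise inequality $(\Lambda_n^{*}\omega_{d^n+1})^{2d-2} \ge (\tilde{\Lambda}_n^{*}\omega_{d_n})^{2d-2}$ provided by Lemma \ref{lm:Lambdan_and_Ln}.

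For (ii), I would invoke McMullen's finiteness theorem (\cite{McMullen4}, Corollary 2.3): there exists $n_1(d) \ge 1$ such that $\Lambda_m$ is finite-to-one on $(\mathcal{M}_d \setminus \mathcal{L}_d)(\mathbb{C})$ for every $m \ge n_1(d)$. Given $n \ge n_0(d)$, set $N := \mathrm{lcm}(n, n_1(d))$, so that $n \mid N$ and $N \ge n_1(d)$. Then $\Lambda_N$ is finite-to-one outside of $\mathcal{L}_d$, as required. (One may enlarge $n_0(d)$ to max$\{n_0(d), n_1(d)\}$ if uniformity between (i) and (ii) is desired.)

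The main obstacle is the effective control in (i): one must show that the error term in Proposition \ref{lm:mass} can be made strictly smaller than the positive quantity $\|\mu_{\mathrm{bif}}\|_{\mathcal{M}_d}$ for some explicit $n_0(d)$. This rests on two ingredients — the strict positivity of $\|\mu_{\mathrm{bif}}\|_{\mathcal{M}_d}$ (Bassanelli-Berteloot) and a bound on the constant $C$ in Proposition \ref{lm:mass} depending only on $d$ and the chosen embedding. Once this is in place, the generic finiteness in (i) follows immediately, and (ii) reduces to McMullen's theorem via the $\mathrm{lcm}$ trick above.
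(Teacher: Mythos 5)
Part (i) of your argument is essentially the paper's: positivity of $\|\mu_{\mathrm{bif}}\|_{\mathcal{M}_d}$ plus Proposition~\ref{lm:mass} applied to $X=\mathcal{M}_d$, $j=k=2d-2$ forces $\|(\tilde\Lambda_n^*\omega_{d_n})^{2d-2}\|_{\mathcal{M}_d,D}>0$ for all $n\ge n_0(d)$, and this is incompatible with all fibres being positive-dimensional (the paper routes this last step through Lemma~\ref{lm:maximality} rather than a rank count, but both are fine), while the passage to $\Lambda_n$ via $(\Lambda_n^*\omega_{d^n+1})^{2d-2}\ge(\tilde\Lambda_n^*\omega_{d_n})^{2d-2}$ is exactly what the paper does.

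Part (ii) is where you diverge, and where there is a gap. You invoke McMullen's Corollary 2.3 in the form ``$\Lambda_m$ is finite-to-one outside $\mathcal{L}_d$ for every $m\ge n_1(d)$''. McMullen's finiteness statement concerns the map recording the multipliers of all cycles of period \emph{at most} $n$, for \emph{some} sufficiently large $n$; it does not directly give finiteness of the single map $\Lambda_m$ (periods \emph{dividing} $m$) for every large $m$. For $m$ prime, for instance, $\Lambda_m$ only sees the fixed points and the exact-period-$m$ cycles, and $(f^m)'(z)=(f'(z))^m$ at a fixed point determines $f'(z)$ only up to an $m$-th root of unity. To extract your conclusion from McMullen you would need to take $N$ a multiple of $\mathrm{lcm}(1,\dots,n_1(d))$ and add a root-extraction argument showing that a fibre of $\Lambda_N$ is contained in a finite union of fibres of McMullen's map; neither step appears in your proof. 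Moreover, the paper deliberately does not use McMullen's finiteness corollary here: its proof of (ii) is self-contained, running an induction over the irreducible components $Y_j$ of the degeneracy locus $Z_n$ of $\Lambda_n$, and using Proposition~\ref{lm:mass} on each $Y_j$ to show that $\Lambda_{qn}$ must become generically finite on $Y_j$ for $q$ large (otherwise $\|T_{\mathrm{bif}}^{\dim Y_j}\|_{Y_j,D}=0$, contradicting Lemma~\ref{lm:criterion} since $Y_j\not\subset\mathcal{L}_d$). Only McMullen's Theorem 2.2 (the stability classification), entering via Lemma~\ref{lm:criterion}, is needed; this independence is what allows the paper to present Theorem~\ref{tm:McMexact} as a strengthening of McMullen's finiteness result rather than a consequence of it.
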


\begin{proof}
Let us begin with proving item $(i)$. We proceed by contradiction and assume that for arbitrary large $n\geq1$, the map $\tilde\Lambda_n$ (resp. $\Lambda_n$) is not finite-to-1 on any Zariski open subset of $\mathcal{M}_d(\mathbb{C})$.

In particular, we have $(\tilde{\Lambda}^*_n\omega_{d_n})^{2d-2}=0$ for all $n\geq1$ by Lemma~\ref{lm:maximality}. Applying Proposition~\ref{lm:mass} (to $X=\mathcal{M}_d$ and $k=2d-2$) , we thus get
\[\int_{\mathcal{M}_d}\mu_\mathrm{bif}\leq \left(\|T_{\mathrm{bif}}\|_{\mathcal{M}_d,D}+C\deg_D(\mathcal{M}_d)\frac{\sigma_2(n)}{d^{n}}\right)^{2d-2}-\|T_{\mathrm{bif}}\|_{\mathcal{M}_d,D}^{2d-2},\]
for arbitrary large $n\geq1$ where $C$ is a constant that depends only on $d$ and $\deg_D(\mathcal{M}_d)$. This is a contradiction, since the bifurcation measure has positive mass and the right hand side of the above inequality is $O(\sigma_2(n)d^{-n})$. Finally, if $(\tilde\Lambda_n^*\omega_{d_n})^{2d-2}$ is non-zero, by Lemma~\ref{lm:Lambdan_and_Ln}, we have $(\Lambda_n^*\omega_{d^n+1})^{2d-2}\geq (\tilde\Lambda_n^*\omega_{d_n})^{2d-2}>0$ and the conclusion follows again using Lemma~\ref{lm:maximality}.

~

We now come to the proof of item $(ii)$. By $(i)$, for any $n\geq n_0$, there exists a non-empty Zariski open set $U_n=\mathcal{M}_d\setminus Z_n$ of $\mathcal{M}_d(\mathbb{C})$ such that $\Lambda_n$ is finite-to-one on $U_n$. Let $Z_n:=\mathcal{M}_d(\mathbb{C})\setminus U_n$, it is clear that $Z_n$ is a proper subvariety of $\mathcal{M}_d(\mathbb{C})$ and that for all integers $n,k\geq1$, we have
\[\mathcal{L}_d\subset Z_n \ \text{ and } \ Z_{kn}\subset Z_n.\]
Fix now $n\geq n_0$. Let $Y_1,\ldots,Y_l$ be the irreducible components of $Z_n$ which are distinct from $\mathcal{L}_d$. Pick $1\leq j\leq l$ and let $k:=\dim Y_j\geq1$. As above, by Lemma~\ref{lm:maximality}, the map $\Lambda_{qn}$ is generically finite to one on $Y_j$ if and only if the measure $(\Lambda_{qn}^*\omega_{d^{qn}+1})^k$ is non-zero. Assume that for arbitrary large  $q\geq1$, the map $\Lambda_{qn}$ is not generically finite on $Y_j$. Lemma~\ref{lm:mass} gives
\[0\leq\int_{Y_j}T_\mathrm{bif}^k\leq C'\frac{\sigma_2(qn)}{d^{qn}}\]
for arbitrary large $q\geq 1$ and some constant $C'>0$ independent of $q$, whence $T_\mathrm{bif}^k=0$ on the variety $Y_j$. Since $Y_j\cap\mathcal{L}_d$ is a strict subvareity of $Y_j$, Lemma~\ref{lm:criterion} implies $T_\mathrm{bif}^k>0$ on $Y_j$. This is a contradiction.
\end{proof}

\section{The multiplier height}

Recall that we have fixed an embedding $\iota:\mathcal{M}_d\hookrightarrow \mathbb{A}^N$ into some affine space, that $\overline{\mathcal{M}}_d$ is the closure of $\mathcal{M}_d$ in $\mathbb{P}^N$ and that
$D=\mathcal{O}(1)|_{\overline{\mathcal{M}_d}}$ is an ample line bundle.
Recall the definition of the multiplier map $\Lambda_n$ and the constants $C_1(d,D)$ and $C_2(d,D)$ from the introduction and that these constants depend only on $d$ and on the embedding $\iota$ we chose.

In this section we prove the following.

\begin{theorem}\label{tm:multiplierheight}
There exists an integer $n_1\geq1$ depending only on $d$ such that for all $m\geq n_1$, there exists a multiple $n$ of $m$ such that the following holds: 
\[2C_1(d,D) h_{\mathcal{M}_d,D}-A_n\leq \frac{1}{n(d^n+1)}h_{\mathbb{P}^{d^n+1},\bQ}\circ \Lambda_n\leq 2C_2(d,D) h_{\mathcal{M}_d,D}+A_n\]
on $(\mathcal{M}_d\setminus\mathcal{L}_d)(\bar{\mathbb{Q}})$ for some constant $A_n\geq0$ depending only on $d$ and $n$.
\end{theorem}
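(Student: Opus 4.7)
The plan is to follow the strategy used to establish Theorem~\ref{tm:critheight} (which deals with $\tilde{\Lambda}_n$ via the critical height), adapted to the full multiplier map $\Lambda_n$. The key point is that $\frac{1}{n(d^n+1)}h_{\mathbb{P}^{d^n+1},\mathbb{Q}}\circ \Lambda_n$ coincides with $h_{\operatorname{crit},\mathbb{Q}}$ up to an error that vanishes as $n\to\infty$, and Theorem~\ref{tm:critheight} then provides the linear comparison with $h_{\mathcal{M}_d,D}$. To make this precise, for $f\in\Rat_d(\bar{\mathbb{Q}})$ defined over a number field $k'$ and $v\in M_{k'}$, introduce the ``full'' analogue
\[
M_n(f,r)_v:=\frac{1}{n(d^n+1)}\sum_{z\in\Fix(f^n)}\log\max\{r,|(f^n)'(z)|_v\}.
\]
The non-archimedean analogue of~\eqref{eq:heighttruncate} (via Gauss' lemma applied to the polynomial with roots $(f^n)'(z)$, $z\in\Fix(f^n)$) at finite places, combined at archimedean places with Jensen's formula and the $\ell^1$-vs-$\ell^\infty$ comparison for coefficients of a polynomial of degree $d^n+1$ (costing $O(\log(d^n+1))$ per archimedean place), yields
\[
\frac{1}{n(d^n+1)}h_{\mathbb{P}^{d^n+1},\mathbb{Q}}(\Lambda_n(f)) = \frac{1}{[k':\mathbb{Q}]}\sum_{v\in M_{k'}}N_v M_n(f,1)_v + O_n(1).
\]

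The decomposition $\Fix(f^n)=\bigsqcup_{k|n}\Fix^*(f^k)$ (as divisors) together with $(f^n)'(z)=((f^k)'(z))^{n/k}$ for $z\in\Fix^*(f^k)$ gives $M_n(f,1)_v=\sum_{k|n}\frac{d_k}{d^n+1}L_k(f,1)_v$, a genuine weighted average since $\sum_{k|n}d_k=d^n+1$. Applying Theorem~\ref{tm:approx1} at non-archimedean places and Theorem~\ref{prop:approxArch} at archimedean places to control each $|L_k(f,1)_v-L(f)_v|$, summing with weights $d_k/(d^n+1)$ (the dominant contribution coming from $k=n$ and of size $\sigma_2(n)/d^n$), using Lemma~\ref{critheight-Lyap} to identify $\frac{1}{[k':\mathbb{Q}]}\sum_v N_v L(f)_v$ as $h_{\operatorname{crit},\mathbb{Q}}(f)$, and Lemma~\ref{lm:absolutevalues-Lyap} to bound globally each of $\sum_v N_v|L(f)_v|$, $\sum_v N_v\log M(f)_v$ and $\sum_v N_v\sup|g_{f,v}|$ by $O(h_{d,\mathbb{Q}}(f)+1)$, one obtains
\[
\left|\frac{1}{n(d^n+1)}h_{\mathbb{P}^{d^n+1},\mathbb{Q}}(\Lambda_n(f))-h_{\operatorname{crit},\mathbb{Q}}(f)\right|\le C\cdot\frac{\sum_{k|n}\sigma_2(k)}{d^n+1}\bigl(h_{d,\mathbb{Q}}(f)+1\bigr)+B_n.
\]

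To descend to $\mathcal{M}_d$, I would choose for each $[f]\in\mathcal{M}_d(\bar{\mathbb{Q}})$ a representative $f$ with $h_{d,\mathbb{Q}}(f)\le B_0 h_{\mathcal{M}_d,D}([f])+B_0'$ (standard comparison of ample heights on the affine variety $\mathcal{M}_d$). Combining with Theorem~\ref{tm:critheight}'s bound $C_1 h_{\mathcal{M}_d,D}-A\le h_{\operatorname{crit},\mathbb{Q}}\le C_2 h_{\mathcal{M}_d,D}+A$ yields, on $(\mathcal{M}_d\setminus\mathcal{L}_d)(\bar{\mathbb{Q}})$,
\[
(C_1(d,D)-\eta_n)h_{\mathcal{M}_d,D}-A_n'\le\frac{1}{n(d^n+1)}h_{\mathbb{P}^{d^n+1},\mathbb{Q}}\circ\Lambda_n\le(C_2(d,D)+\eta_n)h_{\mathcal{M}_d,D}+A_n',
\]
with $\eta_n=CB_0\sum_{k|n}\sigma_2(k)/(d^n+1)\to 0$. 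Fixing $n_1$ large enough so that $\eta_n$ is a controlled fraction of $\min\{C_1,C_2\}$ for all $n\ge n_1$, and then invoking Proposition~\ref{tm:McMmap}(ii) to pick $n$ as a multiple of $m$ for which $\Lambda_n$ is moreover finite-to-one on $\mathcal{M}_d\setminus\mathcal{L}_d$ (ensuring that $\Lambda_n^*\mathcal{O}(1)$ is ample there and that the multiplier height really behaves as a moduli height), produces bounds of the stated form; the coefficient $2$ in front of $C_1,C_2$ provides the slack needed to absorb the residual error.

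The main obstacle is the simultaneous tracking of two competing scales: the approximation error must vanish in $n$, yet it must be globally controlled in terms of the moduli height $h_{\mathcal{M}_d,D}$ uniformly on $\mathcal{M}_d\setminus\mathcal{L}_d$. This requires combining the explicit constants of Lemmas~\ref{lm:growthlift}, \ref{th:upperlyap} and~\ref{badbutexplicit} with a careful choice of representative for each conjugacy class (descending from $h_{d,\mathbb{Q}}$ on $\Rat_d$ to $h_{\mathcal{M}_d,D}$ on $\mathcal{M}_d$), and precise bookkeeping of the weighted sum over divisors of $n$ to ensure the dominant term is indeed $O(\sigma_2(n)/d^n)$.
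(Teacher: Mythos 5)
Your proposal is circular within the logical structure of the paper: you derive Theorem~\ref{tm:multiplierheight} from Theorem~\ref{tm:critheight}, but in the paper Theorem~\ref{tm:critheight} is itself \emph{deduced} from Theorem~\ref{tm:multiplierheight} (its proof combines Corollary~\ref{cor:overnumberfield}, Lemma~\ref{lm:min} and then explicitly invokes Theorem~\ref{tm:multiplierheight} to conclude). The intermediate estimate you describe --- namely that $\frac{1}{n(d^n+1)}h_{\mathbb{P}^{d^n+1}}\circ\Lambda_n$ agrees with $h_{\operatorname{crit}}$ up to an error $O\bigl(h_d(f)\sum_{k|n}\sigma_2(k)/d^n\bigr)+C_n$, obtained from the decomposition $\Fix(f^n)=\bigsqcup_{k|n}\Fix^*(f^k)$ and the local approximation theorems --- is correct and is exactly Corollary~\ref{cor:overnumberfield}; but in the paper this estimate is the bridge used to pass \emph{from} the multiplier-height comparison \emph{to} the critical-height comparison, not the other way around. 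If you instead imported Ingram's qualitative theorem to break the circle, you would not recover the specific constants $2C_1(d,D)$ and $2C_2(d,D)$, which are defined via the masses $\|\mu_{\mathrm{bif}}\|_{\mathcal{M}_d}$ and $\|T_{\mathrm{bif}}\|_{\mathcal{M}_d,D}$; note also that even granting Theorem~\ref{tm:critheight}, your lower bound $(C_1-\eta_n)h_{\mathcal{M}_d,D}-A_n'$ is strictly weaker than the stated $2C_1(d,D)h_{\mathcal{M}_d,D}-A_n$.

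The missing idea is the direct geometric argument that makes the lower bound work from scratch. The paper resolves the rational map $\Lambda_n:\overline{\mathcal{M}_d}\dashrightarrow X_n$ by a normal projective $\widehat{\mathcal{M}_d}$ with $\psi:\widehat{\mathcal{M}_d}\to\overline{\mathcal{M}_d}$ birational and $\widehat{\Lambda}_n$ a generically finite morphism, sets $\widehat{D}=\psi^*D$ and $\widehat{E}_n=\widehat{\Lambda}_n^*E_n$, and proves (Lemma~\ref{lm:Siu}) that $N_1\widehat{E}_n-n(d^n+1)N_2\widehat{D}$ and $n(d^n+1)M_1\widehat{D}-M_2\widehat{E}_n$ are big via Siu's numerical criterion $(D^k)>k(D^{k-1}\cdot E)$. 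The relevant intersection numbers $(\widehat{E}_n^{2d-2})$, $(\widehat{E}_n^{2d-3}\cdot\widehat{D})$ and $(\widehat{D}^{2d-3}\cdot\widehat{E}_n)$ are controlled by Proposition~\ref{lm:mass}, i.e.\ by the masses of $\mu_{\mathrm{bif}}$ and $T_{\mathrm{bif}}$ up to errors $O(\sigma_2(n)/d^n)$; the positivity of $\|\mu_{\mathrm{bif}}\|_{\mathcal{M}_d}$ is what makes the lower-bound divisor big, and the ratios $N_2/N_1$ and $M_1/M_2$ are precisely where $2C_1(d,D)$ and $2C_2(d,D)$ come from. Bigness then gives effectivity of a multiple, hence a height inequality off the base locus, which is contained in $\widehat{\mathcal{M}_d}\setminus\psi^{-1}(\mathcal{M}_d\setminus\mathcal{L}_d)$ because $\Lambda_n$ is finite-to-one off $\mathcal{L}_d$ for the chosen multiple $n$ of $m$ (Proposition~\ref{tm:McMmap}(ii)). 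Your parenthetical remark that finiteness of $\Lambda_n$ ``ensures that $\Lambda_n^*\mathcal{O}(1)$ is ample there'' papers over exactly this step: the pullback is only big and nef after resolution, and the quantitative two-sided comparison requires the Siu-type argument, not ampleness.
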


\begin{proof}
Choose an integer $n_1\geq n_0(d)$ where $n_0$ is given by Proposition~\ref{tm:McMmap}, and such that the following holds for all $m\geq n_1$
\begin{align*}
\left\{\begin{array}{l}
C\deg_D(\mathcal{M}_d)\frac{\sum_{\ell | m}\sigma_2(\ell)}{d^{m}}<\frac{1}{2}\|T_{\mathrm{bif}}\|_{\mathcal{M}_d,D},\\
\left(\|T_{\mathrm{bif}}\|_{\mathcal{M}_d,D}+C\deg_D(\mathcal{M}_d)\frac{\sum_{\ell | m}\sigma_2(\ell)}{d^{m}}\right)^{2d-2} <\|T_{\mathrm{bif}}\|_{\mathcal{M}_d,D}^{2d-2}+ \frac{1}{2}\|\mu_{\mathrm{bif}}\|_{\mathcal{M}_d}.
\end{array}
\right.
\end{align*}
Choose now $m\geq n_1$ and let $n$ be the multiple of $m$ given by the second point of Proposition~\ref{tm:McMmap} so that the map $\Lambda_n$ is finite to one outside of the flexible Latt\`es locus $\mathcal{L}_d$.

Recall that $H_{d^n+1} \simeq \mathcal{O}_{\mathbb{P}^{d^n+1}}(1)$ and that $h_{S^{d^n+1}\mathbb{P}^1}$ is an ample height associated with $H_{d^n+1}$ and that $h_{S^{d^n+1}\mathbb{P}^1}=h_{\mathbb{P}^{d^n+1}}+O(1)$.
Let $E_n$ be the restriction of $H_{d^n+1}$ to the Zariski closure $X_n$ of $\Lambda_n(\mathcal{M}_d)$ in $\mathbb{P}^{d^n+1}$. By the choice of $n$, $\dim X_n=2d-2$ and, by definition, $E_n$ is an ample divisor on $X_n$.

\medskip

The morphism $\Lambda_n:\mathcal{M}_d\to \bP^{d^n+1}$ is generically finite,
and defines a rational map $\Lambda_n:\overline{\mathcal{M}_d}
\dashrightarrow X_n$,
which is dominant and generically finite, by the choice of $n$. Moreover, $\Lambda_n|_{\mathcal{M}_d}$ is a morphism. In particular, there exists a normal projective variety $\widehat{\mathcal{M}_d}$, a birational morphism $\psi:\widehat{\mathcal{M}_d}\rightarrow\overline{\mathcal{M}_d}$ and a generically finite morphism $\widehat{\Lambda}_n:\widehat{\mathcal{M}_d}\rightarrow X_n$ such that the following diagram commutes
\[\xymatrix{
\widehat{\mathcal{M}_d}\ar[d]_\psi \ar[rd]^{\widehat{\Lambda}_n}& \\
\overline{\mathcal{M}_d}\ar@{-->}[r]_{\Lambda_n}   & X_n}\]
and the map $\psi$ restricts as an isomorphism $(\widehat{\mathcal{M}_d})_{\mathrm{reg}}\to(\mathcal{M}_d)_{\mathrm{reg}}$.

As the divisors $D$ and $E_n$ are ample, and as $\widehat{\Lambda}_n$ and $\psi$ are morphisms, the divisors
\[\widehat{E}_n:=\widehat{\Lambda}_n^*E_n \ \text{ and } \ \widehat{D}:=\psi^*D\]
and big and nef divisors of $\widehat{\mathcal{M}_d}$. We have the

\begin{lemma}\label{lm:Siu}
Let $N_1,N_2\geq1$ be any integers satisfying 
\[(2d-2)N_2\left(2\deg_D(\mathcal{M}_d)\right)^{1/(2d-2)} \leq N_1\|\mu_{\mathrm{bif}}\|_{\mathcal{M}_d}^{1/(2d-2)}\]
and let $M_1,M_2\geq1$ be any integers satisfying  \[M_1\deg_D(\mathcal{M}_d)\geq3(d-1)M_2\|T_{\mathrm{bif}}\|_{\mathcal{M}_d,D} . \] Then the following divisors are big:
\[ n(d^{n}+1)M_1\widehat{D}-M_2\widehat{E}_n \ \text{ and } \ N_1\widehat{E}_n-n(d^n+1)N_2\widehat{D}.\]
\end{lemma}

Let us first finish the proof of Theorem~\ref{tm:multiplierheight}. Take $M_1$ and $M_2$ satisfying the hypothesis of the above lemma. We follow closely the strategy of \cite[\S 1]{silverman-heightestimate}: since $M_1n(d^{n}+1)\widehat{D}-M_2\widehat{E}_n$ is big, there exists an integer $k\geq1$ such that $kn(d^{n}+1)M_1\widehat{D}-kM_2\widehat{E}_n$ is effective. According to \cite[Theorem B.3.2(e)]{HS}, this implies 
\[h_{\widehat{\mathcal{M}}_d,kn(d^{n}+1)M_1\widehat{D}-kM_2\widehat{E}_n}(x)\geq O(1) \ \text{ for all }x\in\psi^{-1}\mathcal{M}_d(\bar{\mathbb{Q}}),\]
since the base locus of $kn(d^{n}+1)M_1\widehat{D}-kM_2\widehat{E}_n$ is (at worst) contained in $\widehat{\mathcal{M}_d}\setminus\psi^{-1}(\mathcal{M}_d)$. Furthermore, functorial properties of height functions \cite[Theorem B.3.2(b-c)]{HS} then imply
\begin{align*}
h_{\widehat{\mathcal{M}}_d,kn(d^{n}+1)M_1\widehat{D}-kM_2\widehat{E}_n}(x) & = h_{\widehat{\mathcal{M}}_d,kn(d^{n}+1)M_1\widehat{D}}(x)-h_{\widehat{\mathcal{M}}_d,kM_2\widehat{E}_n}(x)+O(1) \\
&= kn(d^{n}+1)M_1h_{\widehat{\mathcal{M}}_d,\widehat{D}}(x)-kM_2h_{\widehat{\mathcal{M}}_d,\widehat{E}_n}([f])+O(1) \\
&= kn(d^{n}+1)M_1h_{\widehat{\mathcal{M}}_d,\widehat{D}}(x)-kM_2h_{S^{d^n+1}\mathbb{P}^1}\circ\widehat{\Lambda}_n(x)+O(1)
\end{align*}
 for all $x\in \psi^{-1}(\mathcal{M}_d\setminus\mathcal{L}_d)(\bar{\mathbb{Q}})$, since $\widehat{\Lambda}_n$ is finite exactly on $\psi^{-1}(\mathcal{M}_d\setminus\mathcal{L}_d)$. All the above summarizes as
 \[M_1h_{\widehat{\mathcal{M}}_d,\widehat{D}}(x)\geq \frac{1}{n(d^n+1)}M_2h_{S^{d^n+1}\mathbb{P}^1}\circ\Lambda_n(\psi(x))+O(1) \ \text{ for all }x\in\psi^{-1}(\mathcal{M}_d\setminus\mathcal{L}_d)(\bar{\mathbb{Q}}).\]
Since $\psi$ is a surjective morphism, $\Lambda_n$ is well-defined and has finite fibers on $(\mathcal{M}_d\setminus\mathcal{L}_d)(\bar{\mathbb{Q}})$ and since $\widehat{\Lambda}_n=\Lambda\circ \psi$ on $\psi^{-1}((\mathcal{M}_d\setminus\mathcal{L}_d)(\bar{\mathbb{Q}}))$, the definition of $\widehat{D}$ implies
\[M_1h_{\mathcal{M}_d,D}([f])\geq \frac{1}{n(d^n+1)}M_2h_{S^{d^n+1}\mathbb{P}^1}\circ\Lambda_n([f])+O(1) \ \text{ for all }[f]\in(\mathcal{M}_d\setminus\mathcal{L}_d)(\bar{\mathbb{Q}}).\]
In particular, we choose $M_1$ and $M_2$ large enough so that
\[\frac{3(d-1)\|T_{\mathrm{bif}}\|_{\mathcal{M}_d,D}}{\deg_D(\mathcal{M}_d)}\leq \frac{M_1}{M_2}\leq\frac{4(d-1)\|T_{\mathrm{bif}}\|_{\mathcal{M}_d,D}}{\deg_D(\mathcal{M}_d)}\]
 and the fact that $h_{S^{d^n+1}\mathbb{P}^1}=h_{\mathbb{P}^{d^n+1}}+O(1)$ gives the wanted inequality.

~
 
Applying the same strategy to the big divisor $N_1 \widehat{E}_n-n(d^n+1)N_2L$ gives
\begin{align*}
N_1 h_{S^{d^n+1}\mathbb{P}^1}\circ\Lambda_n([f]) 
& \geq n(d^n+1)N_2h_{\mathcal{M}_d,D}([f])+O(1),
\end{align*}
 for all $[f]\in(\mathcal{M}_d\setminus\mathcal{L}_d)(\bar{\mathbb{Q}})$. As above, we choos $N_1$ and $N_2$ so that
\[2(d-1)\left(\frac{2\deg_D(\mathcal{M}_d)}{\|\mu_{\mathrm{bif}}\|_{\mathcal{M}_d}}\right)^{1/(2d-2)}\leq \frac{N_1}{N_2}\leq4(d-1)\left(\frac{\deg_D(\mathcal{M}_d)}{\|\mu_{\mathrm{bif}}\|_{\mathcal{M}_d}}\right)^{1/(2d-2)}\]
and the conclusion follows.
\end{proof}

It now remains to prove Lemma~\ref{lm:Siu}.
For that we rely on Siu's Theorem which states that if $D$ and $E$ are nef divisors on a projective variety $X$ of dimension $k$, and if
\begin{align}
(D^k)>k(D^{k-1}\cdot E),\label{siu}
\end{align}
then the divisor $D-E$ is big (see, e.g.~\cite[Theorem 2.2.15]{Laz}).

\begin{proof}[Proof of Lemma~\ref{lm:Siu}]
Observe first that the map $\widehat{\Lambda}_n=\Lambda_n\circ\psi:\widehat{\mathcal{M}}_d\rightarrow X_n$ is surjective and has finite fibers on a Zariski open set of $\widehat{\mathcal{M}}_d$. In particular, $\widehat{E}_n$ is big and nef. Moreover,
\begin{align*}
(\widehat{E}_n^{2d-2}) =\int_{\widehat{\mathcal{M}}_d}\left(\widehat{\Lambda}_n^*\omega_{d^n+1}\right)^{2d-2} &\geq\int_{\psi^{-1}(\mathcal{M}_d)}\left(\widehat{\Lambda}_n^*\omega_{d^n+1}\right)^{2d-2}= \int_{\mathcal{M}_d}\left(\Lambda_n^*\omega_{d^n+1}\right)^{2d-2}\\
& \geq \|(\Lambda_n^*\omega_{d^n+1})^{2d-2}\|_{\mathcal{M}_d}, 
\end{align*}where we used the change of variable formula and that $\deg(\psi)=1$.
According to Proposition~\ref{lm:mass}, the choice of $n$ guarantees that
\[\|(\Lambda_n^*\omega_{d^n+1})^{2d-2}\|_{\mathcal{M}_d}> \frac{1}{2}n^{2d-2}(d^n+1)^{2d-2}\|\mu_\mathrm{bif}\|_{\mathcal{M}_d},\]
so that the above gives
\[(\widehat{E}_n^{2d-2})>\frac{1}{2}n^{2d-2}(d^n+1)^{2d-2}\|\mu_\mathrm{bif}\|_{\mathcal{M}_d}.\]
On the other hand, since $\psi$ is surjective finite and proper and $\deg(\psi)=1$, $\widehat{D}$ is big and nef and the projection formula gives
\[(\widehat{D}^{2d-2})=(D^{2d-2})=\deg_D(\mathcal{M}_d).\]
Fix an embedding $j:\widehat{\mathcal{M}}_d\hookrightarrow \mathbb{P}^q$.

Recall that, if $T$ is a closed positive $(1,1)$-current (resp. form) on $\widehat{\mathcal{M}}_d$, it can be identified with the restriction of a closed positive $(1,1)$-current (resp. form) on $\mathbb{P}^q$, so that we may identify $\widehat{\Lambda}_n^*\omega_{d^n+1}$ with the restriction to $\widehat{\mathcal{M}}_d$ of a current $S_n$ and $\psi^*\omega_{\mathcal{M}_d,D}$ with the restriction of a form $\alpha$.

Let $\|T\|$ be the mass of any current $T$ with respect to the Fubini Study form $\omega$ of $\mathbb{P}^q$ and let $\deg(\widehat{\mathcal{M}}_d)$ be the degree of the embedding $j(\widehat{\mathcal{M}}_d)$. B\'ezout's Theorem gives
\[\deg_D(\mathcal{M}_d)=(\widehat{D}^{2d-2})=\deg(\widehat{\mathcal{M}}_d)\|\alpha\|^{2d-2},\]
and
\begin{align*}
(\widehat{E}_n^{2d-3}\cdot \widehat{D}) & = \deg(\widehat{\mathcal{M}}_d)\|S_n\|^{2d-3}\|\alpha\|,
\end{align*}
and
\[(\widehat{E}_n^{2d-2})=\deg(\widehat{\mathcal{M}}_d)\|S_n\|^{2d-2}.\]
In particular, this gives
\begin{align*}
(\widehat{E}_n^{2d-3}\cdot \widehat{D}) & = \deg_D(\mathcal{M}_d)\left(\frac{\|S_n\|}{\|\alpha\|}\right)^{2d-3}=\deg_D(\mathcal{M}_d)\left(\frac{(\widehat{E}_n^{2d-2})}{(\widehat{D}^{2d-2})}\right)^{1-1/(2d-2)}\\
& = \deg_D(\mathcal{M}_d)^{1/(2d-2)}(\widehat{E}_n^{2d-2})^{1-1/(2d-2)}.
\end{align*}
Pick any integers $N_1,N_2\geq1$ such that
\[(2d-2)N_2\left(2\deg_D(\mathcal{M}_d)\right)^{1/(2d-2)} \leq N_1\|\mu_{\mathrm{bif}}\|_{\mathcal{M}_d}^{1/(2d-2)}.\]
By the above, the divisors $N_1\widehat{E}_n$ and $N_2\widehat{D}$ satisfy
\begin{align*}
(2d-2)\left((N_1\widehat{E}_n)^{2d-3}\cdot n(d^n+1)N_2\widehat{D}\right) <\big((N_1\widehat{E}_n)^{2d-2}\big).
\end{align*}
By Siu's Theorem, this implies $N_1\widehat{E}_n-n(d^n+1)N_2\widehat{D}$ is big, as required.

~

We now treat the other case. Recall that, by the projection formula, we have
\[(\widehat{D}^{2d-2})=\deg_D(\mathcal{M}_d).\]
Moreover, the current $\widehat{\Lambda}^*_n\omega_{d^n+1}$ has
 continuous potentials and the form $\psi^*\omega_{\mathcal{M}_d,D}$ is smooth, so the measure $(\widehat{\Lambda}^*_n\omega_{d^n+1})\wedge(\psi^*\omega_{\mathcal{M}_d,D}^{2d-3})$ does not give mass to the divisor at infinity $\widehat{\mathcal{M}}_d\setminus\psi^{-1}(\mathcal{M}_d)$. By the change of variable formula, we get
\[(\widehat{D}^{2d-3}\cdot \widehat{E}_n)=\int_{\psi^{-1}(\mathcal{M}_d)}\widehat{\Lambda}_n^*(\omega_{d^n+1})\wedge (\psi^*\omega_{\mathcal{M}_d,D})^{2d-3}=\|\Lambda_n^*\omega_{d^n+1}\|_{\mathcal{M}_d,D},\]
and, as above, Proposition~\ref{lm:mass} and the choice of $n$ give
\[(\widehat{D}^{2d-3}\cdot \widehat{E}_n)<\frac{3}{2}n(d^n+1)\|T_\mathrm{bif}\|_{\mathcal{M}_d,D}.\]
Let $M_1,M_2\geq1$ be any integers such that $M_1\deg_D(\mathcal{M}_d)\geq3(d-1)M_2\|T_{\mathrm{bif}}\|_{\mathcal{M}_d,D}$ and let us set $\widehat{D}_1:=M_1n(d^n+1)\widehat{D}$ and $\widehat{E}_n':=M_2\widehat{E}_n$. As a consequence of the above, we find
\begin{align*}
(2d-2)\left(\left(\widehat{D}_1\right)^{2d-3}\cdot \widehat{E}_n'\right)& < (2d-2)M_1^{2d-3}n^{2d-2}(d^n+1)^{2d-2}M_2\frac{3}{2}\|T_\mathrm{bif}\|_{\mathcal{M}_d,D}\\
& < 3(d-1)M_1^{2d-3}M_2n^{2d-2}(d^n+1)^{2d-2}\|T_\mathrm{bif}\|_{\mathcal{M}_d,D}\\
& < M_1^{2d-2}n^{2d-2}(d^n+1)^{2d-2}\deg_D(\mathcal{M}_d)=(\widehat{D}_1^{2d-2}).
\end{align*}
As above, Siu's Theorem implies that $\widehat{D}_1-\widehat{E}_n'$ is big.
\end{proof}

\section{The critical height and applications}
To simplify the notations and since there is no possible ambiguity, we omit the subscript $\bQ$ for height functions in the whole section.
\subsection{Comparing the multiplier height with the critical height}

Our main goal here is to prove the equivalent of Proposition~\ref{prop:fnctfield} over number fields. Since there are both finite places having positive residue characteristic and infinite places, we get a less precise control on the error term.

\begin{proposition}\label{prop:nbfield}
There exists a constant $C_n$, depending only on $d$ and $n$ such that for any $f\in\mathrm{Rat}_d(\bar{\mathbb{Q}})$ and any $n\geq1$, we have
\[\left|\frac{1}{nd_n}h_{S^{d_n}\mathbb{P}^1}\circ\tilde\Lambda_n(f)- h_{\mathrm{crit}}(f)\right|\leq \bigg(1056d^2-1024d-24\bigg)(d-1) h_{d}(f)\cdot \frac{\sigma_2(n)}{d_n}+C_n.\]
Moreover, $C_n$ can be computed explicitly.
\end{proposition}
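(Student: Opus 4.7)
The plan closely mirrors that of Proposition~\ref{prop:fnctfield}, with two complications specific to the number-field setting: archimedean places require a separate application of Theorem~\ref{prop:approxArch}, and non-archimedean places of residue characteristic $\leq d^n$ force one to use the truncation parameter $r = \epsilon_{d^n,v} < 1$ in Theorem~\ref{tm:approx1}, which must then be corrected back to $r=1$ to match the expression of $h_{S^{d_n}\bP^1}$.

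First, I would fix a finite extension $k'$ of $\bQ$ such that $f \in \Rat_d(k')$ and $\Fix^*(f^n) \subset \bP^1(k')$, and rewrite both sides of the desired inequality as averages over places. Expanding the Weil height coordinate-wise yields
\[
\frac{1}{nd_n}\, h_{S^{d_n}\bP^1}(\tilde\Lambda_n(f)) = \frac{1}{[k':\bQ]} \sum_{v \in M_{k'}} N_v\, L_n(f, 1)_v,
\]
while Lemma~\ref{critheight-Lyap} gives $h_{\mathrm{crit}}(f) = \tfrac{1}{[k':\bQ]} \sum_v N_v L(f)_v$. The problem then reduces to bounding $|L_n(f,1)_v - L(f)_v|$ at each place.

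At archimedean $v$ I would apply Theorem~\ref{prop:approxArch} with $r=1$; at non-archimedean $v$ I would apply Theorem~\ref{tm:approx1} with $r=\epsilon_{d^n,v}$ and then correct via the elementary estimate
\[
|L_n(f,1)_v - L_n(f,\epsilon_{d^n,v})_v| \leq \frac{|\log \epsilon_{d^n,v}|}{n},
\]
which follows because each summand in \eqref{eq:truncate} changes by at most $|\log \epsilon_{d^n,v}|$ when $r$ rises from $\epsilon_{d^n,v}$ to $1$. Summing with weights $N_v/[k':\bQ]$ and invoking Lemma~\ref{lm:absolutevalues-Lyap} to bound
\[
\frac{1}{[k':\bQ]} \sum_v N_v \bigl(|L(f)_v| + c_v \log M(f)_v + \sup|g_{f,v}|\bigr),
\]
with $c_v = 16$ at archimedean places (from the $16\log M$ in Theorem~\ref{prop:approxArch}) and $c_v = 1$ otherwise, the $h_d(f)$-coefficient assembles exactly into $(1056d^2 - 1024d - 24)(d-1)$ after multiplication by $2(2d-2)^2\,\sigma_2(n)/d_n$. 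The remaining absolute terms provided by Lemma~\ref{lm:absolutevalues-Lyap} depend only on $d$ and, once multiplied by $\sigma_2(n)/d_n$, are absorbed into $C_n$.

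The one nontrivial step will be to absorb the $\epsilon$-correction uniformly in $k'$. For each non-archimedean $v$, $|\log \epsilon_{d^n,v}| = d^n \max_{1 \leq m \leq d^n} \log|m|_v^{-1}$, and this vanishes whenever the residue characteristic of $v$ exceeds $d^n$. Applying the product formula to each integer $m \in \{1,\ldots,d^n\}$ yields $\sum_{v\,\text{non-arch}} N_v \log|m|_v^{-1} = [k':\bQ]\log m$, and consequently
\[
\frac{1}{[k':\bQ]} \sum_{v\,\text{non-arch}} N_v\, |\log \epsilon_{d^n,v}| \leq d^n \log((d^n)!),
\]
which depends only on $d$ and $n$. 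This product-formula manipulation, together with the corresponding contribution of $|\log\epsilon_{d^n,v}|$ inside the error term of Theorem~\ref{tm:approx1}, is the only nonroutine obstacle, and its resolution is precisely what allows $C_n$ to be made explicit in terms of $d$ and $n$ alone.
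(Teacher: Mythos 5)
Your proposal is correct and follows essentially the same route as the paper: decompose both quantities place by place via Lemma~\ref{critheight-Lyap}, apply Theorem~\ref{tm:approx1} at finite places and Theorem~\ref{prop:approxArch} at infinite ones, correct the truncation from $\epsilon_{d^n,v}$ back to $1$, and absorb everything through Lemma~\ref{lm:absolutevalues-Lyap}. The only difference is cosmetic: you control the $\epsilon$-correction by applying the product formula to each integer $m\le d^n$ (giving $d^n\log((d^n)!)$), whereas the paper uses the pointwise bound $\epsilon_{d^n,v}>d^{-nd^n}$ together with the fact that only places of residue characteristic at most $d^n$ contribute; both yield a constant depending only on $d$ and $n$.
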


\begin{proof}
Pick $n\geq1$ and $f\in\mathrm{Rat}_d(\bar{\bQ})$. Let $K_n$ be a finite extension of $\mathbb{Q}$ such that $f\in \mathrm{Rat}_d(K_n)$ and $\mathrm{Fix}^{*}(f^n)\cup\mathrm{Crit}(f)\subset\mathbb{P}^1(K_n)$. 
By Lemma~\ref{critheight-Lyap}, we have
\begin{align*}
h_{\mathrm{crit}}(f)=\sum_{c\in\mathrm{Crit}(f)}\hat{h}_f(c)=\frac{1}{[K_n:\mathbb{Q}]}\sum_{v\in M_{K_n}}N_vL_v(f).
\end{align*}
As above, we find
\begin{align*}
E_n &:=\frac{1}{nd_n}\sum_{z\in\mathrm{Fix}^{*}(f^n)}h_{\mathbb{P}^1}((f^n)'(z))-h_{\mathrm{crit}}(f)=\frac{1}{[K_n:\mathbb{Q}]}\sum_{v\in M_{K_n}}N_v\left(L_n(f,1)_v-L_v(f)\right).
\end{align*}
Setting
\[\delta_n(f):=\frac{1}{[K_n:\mathbb{Q}]}\sum_{v\in M_{K_n}}\frac{N_v}{nd_n}\sum_{z\in\mathrm{Fix}^{*}(f^n)}\log\frac{\max\{|(f^n)'(z)|_v,1\}}{\max\{|(f^n)'(z)|_v,\varepsilon_{d^n,v}\}},\]
we find
\begin{align*}
E_n =\frac{1}{[K_n:\mathbb{Q}]}\sum_{v\in M_{K_n}}N_v\left(L_n(f,\varepsilon_{d^n,v})_v-L_v(f)\right)+\delta_n(f).
\end{align*}
We write $v\leq d^n$ if $v\in M_{K_n}$ is non-archimedean and if its residue characteristic $p$ satisfies $p\leq d^n$. Recall that $\varepsilon_{d^n,v}=\min\{|\ell|_v^{d^n}\, : \, 1\leq\ell\leq d^n\}$ satisfies $\varepsilon_{d^n,v}<1$ if and only if $v\leq d^n$ and that $\varepsilon_{d^n,v}>d^{-nd^n}$ in any case.

Since for all $v\in M_{K_n}$ and all $z\in\mathrm{Fix}^{*}(f^n)$, we have
\[\left|\log\max\{|(f^n)'(z)|_v,1\}-\log\max\{|(f^n)'(z)|_v,\varepsilon_{d^n,v}\}\right|\leq \log\varepsilon_{d^n,v}^{-1}\]
and since $\mathrm{Card}(\mathrm{Fix}^{*}(f^n))=d_n$ and since for all prime $p$, $[K_n:\mathbb{Q}]=\sum_{v|p}N_v$, we find
\begin{align*}
|\delta_n(f)| & \leq \frac{1}{[K_n:\mathbb{Q}]}\sum_{v\leq d^n}\frac{N_v}{nd_n}\sum_{z\in\mathrm{Fix}^{*}(f^n)}\log\varepsilon_{d^n,v}^{-1}\\
& \leq  \frac{1}{[K_n:\mathbb{Q}]}\sum_{v\leq d^n}\frac{N_v}{nd_n}\sum_{z\in\mathrm{Fix}^{*}(f^n)}nd^n\log d\leq \sum_{0<p\leq d^n}d^n\log d\leq d^{2n}\log d.
\end{align*}
Combining Theorem~\ref{tm:approx1} and Theorem~\ref{prop:approxArch} at all places of $K_n$ gives
\begin{align*}
[K_n:\mathbb{Q}]|E_n| &\leq \sum_{v\in M_{K_n}}N_v\left|L_n(f,\varepsilon_{d^n,v})_v-L(f)_v\right|+[K_n:\mathbb{Q}]d^{2n}\log d\\
& \leq \sum_{v\in M_{K_n}} AN_v\left(|L(f)_v|+\sup_{\mathsf{P}^{1}_{\mathbb{C}_v}}|g_{f,v}|_v+16\log M(f)_v-\log\varepsilon_{d^n,v}\right)\frac{\sigma_2(n)}{d_n}\\
& \hspace{0.5cm} +[K_n:\mathbb{Q}]d^{2n}\log d
\end{align*}
where $A:=8(d-1)^2>0$ and where we used that $M(f)_v\geq1$ for all $v$. On the one hand, reasoning as above, we find
\begin{align*}
\sum_{v\in M_{K_n}^0} -N_v\log(\varepsilon_{d^n,v})\frac{\sigma_2(n)}{d_n} & \leq [K_n:\mathbb{Q}]\frac{d}{d-1}n\sigma_2(n)d^n\log d.
\end{align*}
On the other hand, Lemma~\ref{lm:absolutevalues-Lyap} then gives
\begin{align*}
\sum_{v\in M_{K_n}}N_v\Big(|L(f)_v|+\sup_{\mathsf{P}^{1}_{\mathbb{C}_v}}|g_{f,v}|_v+16\log M(f)_v\Big)\leq & \frac{132d^2-128d-3}{d-1} h_{d,K_n}(f)\\
& +[K_n:\mathbb{Q}]C(d),
\end{align*}
with $C(d)=\frac{3d-2}{d(2d-2)}A_1(d)+33A_2(d)+\log(2d^3)$, where $A_1(d)$ and $A_2(d)$ are given by Lemma~\ref{lm:growthlift} and~\ref{th:upperlyap} respectively and depends only on $d$.
 Since $h_{d,K_n}(f)=[K_n:\mathbb{Q}]h_d(f)$, this concludes the proof.
\end{proof}

We now deduce easily the following.
\begin{corollary}\label{cor:overnumberfield}
There exists a constant $\tilde{C}_n>0$, depending only on $d$ and $n$ such that for any $f\in\mathrm{Rat}_d(\bar{\mathbb{Q}})$ and any $n\geq1$, we have
\[\left|\frac{1}{n(d^n+1)}h_{S^{d^n+1}\mathbb{P}^1}\big(\Lambda_n(f)\big)- h_{\mathrm{crit}}(f)\right|\leq \bigg(1056d^2-1024d-24\bigg)(d-1) h_d(f)\frac{\sum_{k|n}\sigma_2(k)}{d^n+1}+\tilde{C}_n\]
where $\tilde{C}_n$ can be computed explicitly.
\end{corollary}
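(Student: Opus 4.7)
The strategy is to reduce the statement to Proposition~\ref{prop:nbfield} by expressing the multiplier height of $\Lambda_n(f)$ as a weighted average of the exact-multiplier heights of $\tilde\Lambda_k(f)$ for $k\mid n$, and then using the triangle inequality together with the fact that $\sum_{k\mid n}d_k=d^n+1$.

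\medskip

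First I would establish the following place-by-place decomposition of local truncated Lyapunov quantities. Fix a number field $K$ containing all coefficients of $f$ and all multipliers of periodic points of $f$ of period dividing $n$. The decomposition of the divisor of periodic points,
\[
\mathrm{Fix}(f^n)=\bigsqcup_{k\mid n}\mathrm{Fix}^{*}(f^k)\quad\text{as multisets of size }d^n+1,
\]
together with the chain-rule identity $(f^n)'(z)=((f^k)'(z))^{n/k}$ valid for every $z\in\mathrm{Fix}^{*}(f^k)$ (in the second case of the definition of $\mathrm{Fix}^{*}$, $(f^k)'(z)=1$ and both sides vanish), yields
\[
\sum_{z\in\mathrm{Fix}(f^n)}\log\max\{1,|(f^n)'(z)|_v\}
=\sum_{k\mid n}\frac{n}{k}\sum_{z\in\mathrm{Fix}^{*}(f^k)}\log\max\{1,|(f^k)'(z)|_v\}
\]
for every $v\in M_K$. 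Dividing by $n(d^n+1)$ and using $\sum_{z\in\mathrm{Fix}^{*}(f^k)}\log\max\{1,|(f^k)'(z)|_v\}=kd_kL_k(f,1)_v$ gives
\[
\frac{1}{n(d^n+1)}\sum_{z\in\mathrm{Fix}(f^n)}\log\max\{1,|(f^n)'(z)|_v\}
=\sum_{k\mid n}\frac{d_k}{d^n+1}L_k(f,1)_v.
\]

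\medskip

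Summing this local identity over $v\in M_K$ with weights $N_v/[K:\bQ]$, and using that the global height of an unordered tuple equals the sum of the heights of its coordinates (so that $h_{S^{d^n+1}\bP^1}(\Lambda_n(f))=\sum_{z\in\mathrm{Fix}(f^n)}h_{\bP^1}((f^n)'(z))$, and similarly for $\tilde\Lambda_k$), I obtain the global identity
\[
\frac{1}{n(d^n+1)}h_{S^{d^n+1}\bP^1}(\Lambda_n(f))
=\sum_{k\mid n}\frac{d_k}{d^n+1}\cdot\frac{1}{kd_k}h_{S^{d_k}\bP^1}(\tilde\Lambda_k(f)).
\]
By M\"obius inversion applied to the defining relation of $d_n$, we have $\sum_{k\mid n}d_k=d^n+1$, so $\sum_{k\mid n}(d_k/(d^n+1))=1$ and therefore
\[
\frac{1}{n(d^n+1)}h_{S^{d^n+1}\bP^1}(\Lambda_n(f))-h_{\mathrm{crit}}(f)
=\sum_{k\mid n}\frac{d_k}{d^n+1}\Biggl(\frac{1}{kd_k}h_{S^{d_k}\bP^1}(\tilde\Lambda_k(f))-h_{\mathrm{crit}}(f)\Biggr).
\]

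\medskip

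Applying Proposition~\ref{prop:nbfield} to each term in the right-hand sum and using the triangle inequality gives, with $C(d):=(1056d^2-1024d-24)(d-1)$,
\[
\left|\frac{1}{n(d^n+1)}h_{S^{d^n+1}\bP^1}(\Lambda_n(f))-h_{\mathrm{crit}}(f)\right|
\le\sum_{k\mid n}\frac{d_k}{d^n+1}\Biggl(C(d)\,h_d(f)\,\frac{\sigma_2(k)}{d_k}+C_k\Biggr).
\]
The first group of summands collapses into $C(d)\,h_d(f)\cdot\dfrac{\sum_{k\mid n}\sigma_2(k)}{d^n+1}$, while the second group yields the explicit constant
\[
\tilde C_n:=\sum_{k\mid n}\frac{d_k}{d^n+1}C_k,
\]
which depends only on $d$ and $n$ and is explicitly computable because each $C_k$ is. This produces the claimed inequality.

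\medskip

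The only non-routine point in the argument is justifying the multiset decomposition $\mathrm{Fix}(f^n)=\bigsqcup_{k\mid n}\mathrm{Fix}^{*}(f^k)$ with the correct multiplicities; this is precisely the content of the definition of $\mathrm{Fix}^{*}$ combined with the identity $\sum_{k\mid n}d_k=d^n+1$, and it interacts cleanly with the formula $(f^n)'(z)=((f^k)'(z))^{n/k}$ because the logarithm $\log\max\{1,|\cdot|_v\}$ used in $L_k(\cdot,1)_v$ is a homogeneous-of-degree-one function of $\log\max\{1,|(f^k)'(z)|_v\}$. Everything after this identity is a direct averaging argument over the divisors of $n$.
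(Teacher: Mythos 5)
Your argument is correct and is essentially the paper's own proof: the paper likewise writes $\frac{1}{n(d^n+1)}h_{S^{d^n+1}\bP^1}(\Lambda_n(f))-h_{\Crit}(f)=\sum_{m\mid n}\frac{d_m}{d^n+1}E_m$ using the chain rule, the multiset decomposition of $\Fix(f^n)$ into the $\Fix^*(f^m)$, and $d^n+1=\sum_{m\mid n}d_m$, then concludes by Proposition~\ref{prop:nbfield} and the triangle inequality. Your version just spells out the place-by-place computation that the paper compresses into one line.
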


\begin{proof}
Pick an integer $n$ and $f\in\mathrm{Rat}_d(\bar{\bQ})$.
By definition of $L_n(f,1)_v$, see \eqref{eq:truncate}, the chain rule, 
and since $d^n+1=\sum_{m|n}d_m$, we have
\begin{align*}
 R_n:=\frac{1}{n(d^n+1)}\sum_{z\in\mathrm{Fix}(f^n)}h_{\mathbb{P}^1}((f^n)'(z))-h_{\Crit}(f)=\sum_{m|n}\frac{d_m}{d^n+1}E_m,
\end{align*}
which with Proposition~\ref{prop:nbfield} completes the proof.
\end{proof}

\subsection{The minimal height, quantified}
Recall that we have chosen an embedding $\mathcal{M}_d\hookrightarrow\mathbb{A}^N$ and that we have set $D=\mathcal{O}_{\mathbb{P}^N}(1)|_{\iota(\mathcal{M}_d)}$. As in \cite[\S6.2]{silverman-moduli}, for any $[f]\in\mathcal{M}_d(\bar{\mathbb{Q}})$,
let
\[h^{\mathrm{min}}([f]):=\min\left\{h(g)\, ; g\sim f \ \text{and} \ g\in\mathrm{Rat}_d(\bar{\mathbb{Q}})\right\}.\]
According to \cite[Lemma~6.32]{silverman-moduli}, there exists constants $A_1,A_2>0$ and $B_1,B_2$ such that 
\begin{align}
A_1h_{\mathcal{M}_d,D}-B_1\leq h^{\mathrm{min}}\leq A_2h_{\mathcal{M}_d,D}+B_2 \ \text{on} \ \mathcal{M}_d(\bar{\mathbb{Q}}).\label{heightmin}
\end{align}
In fact, we only need the bound from above and we can quantify the multiplicative constant. Namely, we can prove the next lemma.

\begin{lemma}\label{lm:min}
There exists $A>0$ depending only on $d$ and $\deg_D(\mathcal{M}_d)$ such that
\[h^{\mathrm{min}}\leq (2d-2)h_{\mathcal{M}_d,D}+A, \ \ \text{on} \ \mathcal{M}_d(\bar{\mathbb{Q}}).\]
\end{lemma}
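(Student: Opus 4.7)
The plan is to mirror the Siu-bigness approach used in the proof of Theorem~\ref{tm:multiplierheight}, now applied to a rational section of the quotient map $\pi:\mathrm{Rat}_d\to\mathcal{M}_d$ rather than to a multiplier map. The essential point is that $\dim\mathcal{M}_d=2d-2$, so that Siu's numerical bigness criterion naturally produces the sharp multiplicative factor $(2d-2)$.

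First I would construct a generically finite morphism $\tau:\widehat{\mathcal{M}}_d\to\mathbb{P}^{2d+1}$ from a suitable resolution $\psi:\widehat{\mathcal{M}}_d\to\overline{\mathcal{M}}_d$ of the compactified moduli space, satisfying $\pi\circ\tau=\psi$ on the preimage of an open dense subset of $\mathcal{M}_d$. In other words, $\tau(x)$ is a lift in $\mathrm{Rat}_d\subset\mathbb{P}^{2d+1}$ of the conjugacy class $\psi(x)$. Concretely, one forms the closure $\Gamma\subset\overline{\mathcal{M}}_d\times\mathbb{P}^{2d+1}$ of the incidence correspondence $\{([f],g):g$ represents $[f]\}$, intersects $\Gamma$ with a generic linear subvariety of codimension $3$ from the $\mathbb{P}^{2d+1}$ factor to kill the $3$-dimensional $\mathrm{PGL}_2$-orbit fibers, and resolves singularities. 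Set $\widehat{D}:=\psi^*D$ and $\widehat{E}:=\tau^*\mathcal{O}_{\mathbb{P}^{2d+1}}(1)$; both are nef on $\widehat{\mathcal{M}}_d$, and $\widehat{D}$ is big with $(\widehat{D}^{2d-2})=\deg_D(\mathcal{M}_d)$.

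Second I would apply Siu's criterion exactly as in Lemma~\ref{lm:Siu} to prove that $(2d-2)\widehat{D}-\widehat{E}$ is big. After simplification, this reduces to the strict intersection-theoretic inequality
\[
\deg_D(\mathcal{M}_d)\;>\;\bigl(\widehat{D}^{2d-3}\cdot\widehat{E}\bigr),
\]
which, by B\'ezout on the ambient projective space, compares the degree of $\overline{\mathcal{M}}_d$ to the degree (with respect to $D$) of the one-cycle swept out by generic linear slices of $\mathrm{PGL}_2$-orbits; an appropriate choice of the three generic slices in the first step guarantees the inequality. Then, applying \cite[Theorem~B.3.2(e)]{HS} verbatim as in the proof of Theorem~\ref{tm:multiplierheight} yields a constant $A\geq 0$ depending only on $d$ and $\deg_D(\mathcal{M}_d)$ such that
\[
h_{\widehat{\mathcal{M}}_d,\widehat{E}}(x)\;\leq\;(2d-2)\,h_{\widehat{\mathcal{M}}_d,\widehat{D}}(x)+A
\]
on the complement in $\widehat{\mathcal{M}}_d(\overline{\mathbb{Q}})$ of the augmented base locus. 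Using that $\psi$ is birational, so $h_{\widehat{\mathcal{M}}_d,\widehat{D}}(x)=h_{\mathcal{M}_d,D}(\psi(x))+O(1)$, and that $\tau(x)$ represents $\psi(x)$, so $h^{\mathrm{min}}(\psi(x))\leq h(\tau(x))=h_{\widehat{\mathcal{M}}_d,\widehat{E}}(x)+O(1)$, gives the inequality on a Zariski open dense subset of $\mathcal{M}_d(\overline{\mathbb{Q}})$. The finitely many components of the excluded set are handled by varying the three linear slices or by an inductive argument on the dimension of the excluded subvariety, combined with the already-known qualitative bound \eqref{heightmin} to control the constant there.

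The main obstacle is the intersection-theoretic step: verifying the bound $(\widehat{D}^{2d-3}\cdot\widehat{E})<\deg_D(\mathcal{M}_d)$ with the sharp constant $(2d-2)$ rather than some larger constant. This requires a sufficiently canonical construction of $\tau$ so that the degree of the section with respect to $D$ can be estimated using the specific structure of the $\mathrm{PGL}_2$-action on $\mathrm{Rat}_d\subset\mathbb{P}^{2d+1}$ (in particular, the degrees of the generic orbit closures), with all auxiliary quantities absorbed into $A$.
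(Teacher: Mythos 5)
Your overall architecture is the same as the paper's: produce a generically finite multisection of the quotient $\Rat_d\to\mathcal{M}_d$ sitting inside a $\bP^{2d-2}\subset\bP^{2d+1}$, pull back $D$ and $\mathcal{O}_{\bP^{2d+1}}(1)$ to a common modification, apply Siu's criterion \eqref{siu} to get bigness of $(2d-2)\widehat{D}-\widehat{E}$, and transfer to heights. (The paper builds the multisection by explicit normal forms --- maps fixing $0,1,\infty$, which form a \emph{linear} $\bP^{2d-2}\subset\bP^{2d+1}$, plus a second family of dimension $2d-3$ for maps with a multiple fixed point --- rather than by slicing the incidence correspondence with a generic codimension-$3$ linear space; your construction is a reasonable substitute and also yields a degree-one image in $\bP^{2d+1}$.) However, the decisive step is exactly the one you defer: the upper bound $(\widehat{D}^{2d-3}\cdot\widehat{E})<(\widehat{D}^{2d-2})$. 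As stated, your proposal does not prove it, and the mechanism you invoke cannot work: the intersection numbers $(\widehat{D}^{2d-3}\cdot\widehat{E})$ and $(\widehat{D}^{2d-2})$ depend only on the numerical classes of $\widehat{D}$ and $\widehat{E}$ on the (fixed, general) slice, so ``an appropriate choice of the three generic slices'' cannot create the inequality --- all general slices give the same numbers. Worse, the Khovanskii--Teissier inequality only gives $(\widehat{D}^{2d-3}\cdot\widehat{E})\ge(\widehat{D}^{2d-2})^{(2d-3)/(2d-2)}(\widehat{E}^{2d-2})^{1/(2d-2)}$, i.e.\ a lower bound on the mixed term; an \emph{upper} bound requires genuinely extra input. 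The paper supplies this input by embedding the modification $Y$ in an ambient $\bP^M$ and arguing that both pulled-back classes are restrictions of (multiples of) the hyperplane class there, hence proportional in cohomology, so that the mixed intersection \emph{equals} $(\widehat{D}^{2d-2})^{(2d-3)/(2d-2)}(\widehat{E}^{2d-2})^{1/(2d-2)}$; combined with $(\widehat{E}^{2d-2})=1$ (the multisection is a linear subspace) and $(\widehat{D}^{2d-2})=\deg(\Pi)\deg_D(\mathcal{M}_d)>1$, this gives the strict inequality. Without that proportionality (or some other computation of the mixed term), your argument does not close.

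Two smaller points. First, your $\psi$ cannot be birational: the fibre of the sliced incidence correspondence over a generic $[f]$ consists of $\deg(\overline{\PGL_2\cdot f})$ points, so $\psi$ is generically finite of degree $>1$; this is harmless (the projection formula and functoriality of heights still apply, as in Lemma~\ref{lm:Siu}), but you then must carry $\deg(\psi)$ through the computation rather than writing $(\widehat{D}^{2d-2})=\deg_D(\mathcal{M}_d)$. Second, handling the indeterminacy/excluded locus by appealing to the qualitative bound \eqref{heightmin} defeats the purpose: its constants are not controlled by $d$ and $\deg_D(\mathcal{M}_d)$ alone. The paper instead covers all of $\mathcal{M}_d$ by exactly two explicit families (three distinct fixed points, respectively a multiple fixed point), the second of dimension $2d-3$, and runs the same Siu argument on each; if you keep your construction you need a comparably explicit finite covering with uniform constants.
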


\begin{proof}
As in the proof of Lemma~\ref{lm:Siu}, we use the numerical criterion of Siu, see formula~\eqref{siu}. We proceed in two steps:
First, we treat the case when $f$ has at least three distinct fixed points. Second, we treat the case when $f$ has at least one multiple fixed point.

Let us define $X$ as the set of rational maps $f$ having a lift of the form
\[F(Z,W)=\left(Z^d+\sum_{j=1}^{d-1}a_jZ^jW^{d-j},\sum_{j=0}^{d-1}b_jZ^jW^{d-j}\right)\]
with $\sum_ja_j=\sum_\ell b_\ell$, so that $f$ has $0$, $1$ and $\infty$ as fixed points. This variety identifies with a Zariski open set of a linear subspace of $\mathbb{P}^{2d+1}$ of dimension $2d-2$, i.e. we may compactify $X$ as $\mathbb{P}^{2d-2}$. 

Let $\Pi:X\rightarrow\mathcal{M}_d$ be the restriction of the canonical projection. It is clear that the fiber of $\Pi$ over any $[f]\in\mathcal{M}_d\setminus\mathrm{Per}_1(1)$, is non-empty and finite with a uniform bound on its cardinality. In particular, $\Pi$ defines a rational map
\[\Pi:\mathbb{P}^{2d-2}\dashrightarrow \overline{\mathcal{M}}_d\]
which is dominant and generically finite. Let $Y$ be a normal projective variety defined over $\mathbb{Q}$ and $\pi_1,\pi_2$ be a morphism such that
\[\xymatrix{
Y\ar[d]_{\pi_1} \ar[rd]^{\pi_2}& \\
\mathbb{P}^{2d-2}\ar@{-->}[r]_{\Pi}   & \overline{\mathcal{M}_d}}\]
commutes and the map $\pi_1$ is an isomorphism onto $X$.
Set
\[H:=\pi_2^*D \ \text{and} \ E:=\pi_1^*\mathcal{O}_{\mathbb{P}^{2d-2}}(1).\]
As $\pi_1$ and $\pi_2$ are surjective morphisms which are generically finite, the divisors $H$ and $E$ are big and nef. Moreover, the projection formula gives
\[(H^{2d-2})=\deg(\pi_2)(D^{2d-2})=\deg(\Pi)\deg_D(\mathcal{M}_d)\]
and
\[(E^{2d-2})=\deg(\pi_1)(\mathcal{O}_{\mathbb{P}^{2d-2}}(1)^{2d-2})=1.\]
We now choose an embedding $Y\hookrightarrow \mathbb{P}^M$ and denote by $\deg(Y)$ the degree $\deg_{\mathcal{O}_{\mathbb{P}^M}(1)}(Y)$. 

As in the proof of Lemma~\ref{lm:Siu}, we may identify $\pi_2^*\omega_{\mathcal{M}_d,D}$ with the restriction to $Y$ of a form $\alpha$ and $\pi_1^*\omega_{2d-2}$ with the restriction of a current $S$. Bézout's Theorem gives
\[(H^{2d-2})=\int_{Y}\left(\pi_2^*\omega_{\mathcal{M}_d,D}\right)^{2d-2}=\deg(Y)\cdot\|\alpha\|^{2d-2}\]
and
\[(E^{2d-2})=\int_{Y}\left(\pi_1^*\omega_{2d-2}\right)^{2d-2}=\deg(Y)\cdot\|S\|^{2d-2}.\]
Combined with the above use of the projection formula, this gives
\[\left(\frac{\|S\|}{\|\alpha\|}\right)^{2d-2}\deg(\Pi)\deg_D(\mathcal{M}_d)=1.\]
Using again B\'ezout, we find
\begin{align*}
(H^{2d-3}\cdot E) & =\int_{Y}\left(\pi_2^*\omega_{\mathcal{M}_d,D}\right)^{2d-3}\wedge\left(\pi_1^*\omega_{2d-2}\right)\\
& =\deg(Y)\cdot\|\alpha\|^{2d-3}\|S\|\\
& =\deg(Y)\cdot\|\alpha\|^{2d-2}\frac{\|S\|}{\|\alpha\|}\\
& =\frac{\|S\|}{\|\alpha\|}(H^{2d-2})\\
& =\left(\deg(\Pi)\deg_D(\mathcal{M}_d)\right)^{-1/(2d-2)}\cdot (H^{2d-2}).
\end{align*}
We note that $\deg(\Pi)\deg_D(\mathcal{M}_d)>1$ and we define
\[\tilde{H}:=(2d-2)H.\]
All the above summarizes as
\begin{align*}
(2d-2)\left(\tilde{H}^{2d-3}\cdot E\right) & =(2d-2)^{2d-2}(H^{2d-3}\cdot E)\\
&<(2d-2)^{2d-2}\left(\deg(\Pi)\deg_D(\mathcal{M}_d)\right)^{1/(2d-2)}(H^{2d-3}\cdot E)\\
& <(2d-2)^{2d-2}(H^{2d-2})=(\tilde{H}^{2d-2}).
\end{align*}
By Siu's numerical criterion, this implies $\tilde{H}-E$ is a big divisor on $Y$. In particular, by an argument similar to that in the proof of Lemma~\ref{lm:Siu}, we find
\[(2d-2)h_{\mathcal{M}_d,D}\circ\Pi\geq h_{\mathbb{P}^{2d-2}}+O(1)=h_{d,\mathbb{Q}}+O(1)\]
on $X(\bar{\mathbb{Q}})$ and in turn we have the desired estimate
\[h^{\mathrm{min}}([f])\leq h_d(f)\leq (2d-2)h_{\mathcal{M}_d,D}([f])+O(1)\]
for all $f\in X(\bar{\mathbb{Q}})$.

~

We now come to the case of rational maps having a multiple fixed point. Let $Z$ be the set of rational maps $f$ having a lift of the form
\[F(z,w)=\left(Z^d+\sum_{j=1}^{d-1}a_jZ^jW^{d-j}+W^d,Z^{d-1}W+\sum_{j=1}^{d-2}b_jZ^jW^{d-j}\right).\]
The family $Z$ is again a Zariski open set of a linear subspace of $\mathbb{P}^{2d+1}$ of dimension $2d-3$. Moreover, any rational map with a multiple fixed point is conjugated to such a map $f\in Z$; and fibers of the restriction of the canonical projection $\Pi:Z\rightarrow\mathrm{Per}_1(1)\subset\mathcal{M}_d$ over any $[f]\in \mathrm{Per}_1(1)$ are non-empty, finite with uniformly bounded cardinality. The same proof as above gives
\begin{align*}
h^{\mathrm{min}}([f])\leq h(f)& \leq (2d-3)h_{\mathcal{M}_d,D}([f])+O(1)\\
& \leq (2d-2)h_{\mathcal{M}_d,D}([f])+O(1)
\end{align*}
for all $f\in Z(\bar{\mathbb{Q}})$.
Since any class $[f]\in\mathcal{M}_d(\bar{\mathbb{Q}})$ admits either a representative in $X(\bar{\mathbb{Q}})$ or a representative in $Z(\bar{\mathbb{Q}})$, this ends the proof.
\end{proof}

\subsection{The critical height is a moduli height, quantified}\label{sec:mainresult}
We now want to give the proof of Theorem~\ref{tm:critheight} and of Theorem~\ref{tm:McMexact}. 

\begin{proof}[Proof of Theorem~\ref{tm:critheight}]
 Corollary~\ref{cor:overnumberfield} implies that
\begin{align}
\left|\frac{1}{n(d^n+1)}h_{\mathbb{P}^{d^n+1}}\circ\Lambda_n([f])- h_{\mathrm{crit}}([f])\right|\leq C(d)\cdot h^{\mathrm{min}}([f])\cdot \frac{\sum_{k|n}\sigma_2(k)}{d^n}+C_n,\label{onMd}
\end{align}
 for all $n\geq1$ and all $[f]\in\mathcal{M}_d(\bar{\mathbb{Q}})$, where $C_n>0$ is a constant depending only on $d$ and $n$ and $C(d)=(1056d^2-1024d-24)(d-1)$.

Pick now $n$ so that Theorem~\ref{tm:multiplierheight} applies and such that
\[(2d-2)C(d)\frac{\sum_{k|n}\sigma_2(k)}{d^n}\leq C_1(d,D),\]
where $C_1(d,D)$ is defined in the introduction. Combining Lemma~\ref{lm:min} with \eqref{onMd} gives
\begin{align*}
\left|\frac{1}{n(d^n+1)}h_{\mathbb{P}^{d^n+1}}\circ\Lambda_n([f])- h_{\mathrm{crit}}([f])\right|\leq C_1(d,D) h_{\mathcal{M}_d,D}([f])+C_n',
\end{align*}
for all $[f]\in\mathcal{M}_d(\bar{\mathbb{Q}})$. Since $C_1(d,D)\leq C_2(d,D)$, Theorem~\ref{tm:multiplierheight} ends the proof.
\end{proof}

\begin{proof}[Proof of Theorem~\ref{tm:McMexact}]
Fix $n\in\mathbb{N}^*$. Le us first prove that, when $n$ is large enough, $h_{S^{d_n/n}\mathbb{P}^1}\circ \Lambda_{n,\mathrm{ex}}$ is a height function on $(\mathcal{M}_d\setminus\mathcal{L}_d)(\bar{\bQ})$. By Proposition~\ref{prop:nbfield} and Lemma~\ref{lm:min},
\begin{align*}
\left|\frac{1}{nd_n}h_{S^{d_n}\mathbb{P}^1}\circ \tilde{\Lambda}_n-h_{\mathrm{crit}}\right| & \leq (2d-2)C(d)\frac{\sigma_2(n)}{d^n}h_{\mathcal{M}_d,D}+C_n',
\end{align*}
where $C(d)=1056d^3-1024d^2-24d$ and $C_n'$ is a constant that depends only in $d$ and $n$.

By Theorem~\ref{tm:critheight}, for all $[f]\in(\mathcal{M}_d\setminus\mathcal{L}_d)(\bar{\mathbb{Q}})$, we have
\[C_1(d,D)h_{\mathcal{M}_d,D}-A\leq h_{\mathrm{crit}} \leq C_2(d,D)h_{\mathcal{M}_d,D}+A,\]
where $A$ depends only on $d$ and $C_1(d,D)$ and $C_2(d,D)$ are defined in the introduction.
In particular, as soon as 
\begin{align}
\frac{1}{2}C_1(d,D)> (2d-2)C(d)\frac{\sigma_2(n)}{d^n},\label{eq:goodinteger}
\end{align}
the function $\frac{1}{nd_n}h_{S^{d_n}\mathbb{P}^1}\circ \tilde{\Lambda}_n$ is a Weil height on $(\mathcal{M}_d\setminus\mathcal{L}_d)(\bar{\mathbb{Q}})$. More precisely, there exists a constant $C_n''$ such that
\[\frac{1}{2}C_1(d,D)h_{\mathcal{M}_d,D}-C_n''\leq \frac{1}{nd_n}h_{\bP^{d_n}}\circ\tilde{\Lambda}_n \leq \left(C_2(d,D)+\frac{1}{2}C_1(d,D)\right)h_{\mathcal{M}_d,D}+C_n'',\]
on $(\mathcal{M}_d\setminus\mathcal{L}_d)(\bar{\mathbb{Q}})$.

~

We now prove that $\tilde\Lambda_{n}$ is finite to one on $(\mathcal{M}_d\setminus\mathcal{L}_d)(\bar{\mathbb{Q}})$. Assume first there exists $x\in\mathbb{P}^{d_n}(\bar{\mathbb{Q}})$ such that there exists an irreducible component $X$ of $\Lambda_{n,\mathrm{ex}}^{-1}\{x\}$ with positive dimension and such that $X(\bar{\mathbb{Q}})\cap\mathcal{L}_d(\bar{\mathbb{Q}})$ is finite.
Let $V:=X\setminus X\cap\mathcal{L}_d$, so that $V$ is an irreducible quasi-projective variety of positive dimension defined over a number field $K$.
By the first step of the proof, we have
\[h_{\mathcal{M}_d,D}\leq \frac{2}{C_1(d,D)nd_n}\left(h_{\mathbb{P}^{d_n}}(x)+C_n'\right)<+\infty \ \text{on} \ V(\bar{K}).\]
On the other hand, using again the functorial properties of height functions, we see that the height $h_{\mathcal{M}_d,D}$ restricts to $V$ as a height function associated with an ample line bundle on $X$. This is a contradiction, since $h_{\mathcal{M}_d,D}$ is uniformly bounded on $V(\bar{K})$.

To conclude, it is sufficient to remark that, since $\Lambda_{n,\mathrm{ex}}$ has finite fibers on $(\mathcal{M}_d\setminus\mathcal{L}_d)(\bar{\mathbb{Q}})$, it also has finite fibers on $(\mathcal{M}_d\setminus\mathcal{L}_d)(K)$ for any extension $K$ (algebraic or not) of $\bar{\mathbb{Q}}$, ending the proof.
\end{proof}

\begin{remark}[Quadratic case]\normalfont
\begin{enumerate}
\item When $d=2$, the moduli space is canonically isomorphic to the affine space $\mathbb{A}^2$ and the isomorphism is given by $(\sigma_{1,1},\sigma_{1,2})$ by~\cite{Milnor3,silverman-spacerat}. We can choose $D$ to be the line bundle $\mathcal{O}(1)$. In particular, $\deg_D(\mathcal{M}_2)=1$ and the equation \eqref{eq:goodinteger} reads as
\[\frac{1}{4}\|\mu_{\mathrm{bif}}\|^{1/2}>8608\frac{\sigma_2(n)}{2^n}.\]
According to~\cite[Corollary~D]{GOV}, the mass of $\mu_{\mathrm{bif}}$ is then
\[\|\mu_{\mathrm{bif}}\|=\frac{1}{3}-\frac{1}{8}\sum_{n\geq1}\frac{\phi(n)}{(2^n-1)^2}\geq0.1875.\]
where $\phi$ is the Euler totient function. In particular, the condition is fulfilled for $n\geq27$, whence $\tilde{\Lambda}_n$ is finite to one for all $n\geq27$.
\item This is explicit but not satisfactory, since it is easy to prove that the map $\Lambda_{n}$ has finite fibers for \emph{all} $n\geq1$. Indeed, for $n=1$, the map $\tilde{\Lambda}_1=\Lambda_1$ is known to be an isomorphism from $\mathbb{A}^2$ to its image. Moreover, as seen in the proof of Proposition~\ref{tm:McMmap}, $\Lambda_n$ is finite to one on a Zariski open set $\mathbb{A}^2(\bC)\setminus Z_n$ for all $n\geq1$ and the set $Z_n$ satisfies $Z_n\subset Z_1=\emptyset$ for all $n>1$, as required.
\end{enumerate}
\end{remark}

\bibliographystyle{short}
\bibliography{biblio}

\end{document}